\newcommand{\Exp}{\mathbb{E}}
\newcommand{\R}{\mathbb{R}}
\newcommand{\B}{\{ 0, 1 \}}
\newcommand{\Prob}{\mathbb{P}}
\newcommand{\A}{\mathcal{A}}
\newcommand{\bx}{\bm{x}}
\newcommand{\bz}{\bm{z}}
\newcommand{\by}{\bm{y}}
\newcommand{\bv}{\bm{v}}
\newcommand{\bu}{\bm{u}}
\newcommand{\blambda}{\bm{\lambda}}
\newcommand{\btheta}{\bm{\theta}}
\newcommand{\balpha}{\bm{\alpha}}
\newcommand{\bmu}{\bm{\mu}}
\newcommand{\bxhat}{\hat{\bx}}
\newcommand{\bzhat}{\hat{\bz}}
\newcommand{\xhat}{\hat{x}}
\newcommand{\zhat}{\hat{z}}
\newcommand{\zbar}{\bar{z}}
\newcommand{\Starttt}{\texttt{s}}
\newcommand{\Endtt}{\texttt{e}}
\newcommand{\depots}{\mathcal{K}}
\newcommand{\depotsStart}{\depots^\Starttt}
\newcommand{\depotsEnd}{\depots^\Endtt}
\newcommand{\trips}{\mathcal{I}}
\newcommand{\routes}{\mathcal{R}}
\newcommand{\scenarios}{\mathcal{S}}
\newcommand{\compatible}{\mathcal{C}}
\newcommand{\ccset}{\mathcal{F}}
\newcommand{\mcfset}{\mathcal{X}}
\newcommand{\subproblem}{\mathcal{F}}
\newcommand{\subproblemLP}{\overline{\subproblem}}
\newcommand{\cutset}{\Lambda}
\newcommand{\validset}{\Theta}
\newcommand{\schedule}{\mathcal{V}}
\newcommand{\scheduleIS}{\mathcal{V}^{\texttt{IS}}}
\newcommand{\scheduleMIS}{\mathcal{V}^{\texttt{MIS}}}
\newcommand{\scheduleCMIS}{\mathcal{V}^{\texttt{C-MIS}}}
\newcommand{\scheduleMISExtra}{\mathcal{V}^{\texttt{Extra}}}
\newcommand{\tripsMIS}{\trips^{\texttt{C-MIS}}}
\newcommand{\tripsMISlast}{\tripsMIS_{\texttt{last}}}
\newcommand{\tripsMISfirst}{\tripsMIS_{\texttt{first}}}
\newcommand{\tripsMISmiddle}{\tripsMIS_{\texttt{middle}}}
\newcommand{\scheduleDepot}{\overline{\schedule}}
\newcommand{\scheduleDepotOne}{\scheduleDepot^1}
\newcommand{\scheduleDepotZero}{\scheduleDepot^0}
\newcommand{\numschedule}{V}
\newcommand{\numscheduleDepot}{\overline{\numschedule}}
\newcommand{\bus}{\mathcal{B}}
\newcommand{\numbuses}{B}
\newcommand{\delaytrips}{\mathcal{D}}
\newcommand{\delayMIS}{\delaytrips^{\texttt{C-MIS}}}
\newcommand{\numtrips}{I}
\newcommand{\numdepots}{K}
\newcommand{\numroute}{R}
\newcommand{\numscenarios}{S}
\newcommand{\numarcs}{A}
\newcommand{\cost}{\texttt{c}}
\newcommand{\stime}{\texttt{s}}
\newcommand{\dur}{\texttt{d}}
\newcommand{\travel}{\texttt{t}}
\newcommand{\approxdur}{\tilde{\dur}}
\newcommand{\approxtravel}{\tilde{\travel}}
\newcommand{\express}{\texttt{e}}
\newcommand{\capacity}{\texttt{b}}
\newcommand{\loc}{\texttt{l}}
\newcommand{\locstart}{\loc^\Starttt}
\newcommand{\locend}{\loc^\Endtt}
\newcommand{\ub}{\texttt{ub}}
\newcommand{\lb}{\texttt{lb}}
\newcommand{\bigM}{\texttt{M}}
\newcommand{\bigMstart}{\bigM^{\texttt{start}}}
\newcommand{\bigMotp}{\bigM^{\texttt{OTP}}}
\newcommand{\s}{s}
\newcommand{\graph}{\mathcal{G}}
\newcommand{\nodes}{\mathcal{N}}
\newcommand{\arcs}{\mathcal{A}}
\newcommand{\ccprob}{\epsilon}
\newcommand{\serviceRoute}{\delta^{\texttt{route}}}
\newcommand{\serviceTrips}{\delta^{\texttt{trip}}}
\newcommand{\numserviceTrip}{\texttt{f}^{\texttt{trip}}}
\newcommand{\numserviceRoute}{\texttt{f}^{\texttt{route}}}
\newcommand{\partitions}{\mathcal{P}}
\newcommand{\numpartition}{P}
\newcommand{\tripgroups}{\trips^{\texttt{gr}}}
\newcommand{\groupsize}{\texttt{m}^{\texttt{gr}}}
\newcommand{\lagrsub}{\textsf{L}}
\newcommand{\lagrconst}{\texttt{C}}
\newcommand{\grad}{g}
\newcommand{\dualtrip}{\sigma^{\texttt{trip}}}
\newcommand{\dualroute}{\sigma^{\texttt{route}}}
\newcommand{\dualMIS}{\sigma^{\texttt{MIS}}}
\newcommand{\dualstart}{\alpha}
\newcommand{\dualOTP}{\pi}
\newcommand{\duallb}{\beta}
\newcommand{\dualtriphat}{\hat{\sigma}^{\texttt{trip}}}
\newcommand{\dualroutehat}{\hat{\sigma}^{\texttt{route}}}
\newcommand{\dualMIShat}{\hat{\sigma}^{\texttt{MIS}}}
\newcommand{\dualstarthat}{\hat{\dualstart}}
\newcommand{\dualOTPhat}{\hat{\dualOTP}}
\newcommand{\duallbhat}{\hat{\duallb}}
\newcommand{\algReturn}{\textbf{return}}
\newcommand{\algGreedy}{\textsc{Greedy}}
\newcommand{\algMIS}{\textsc{C-MIS\_Greedy}}
\newcommand{\algDelayToExplain}{\texttt{start\_time\_to\_explain}}
\newcommand{\tol}{\epsilon^{\texttt{tol}}}
\newcommand{\con}{\texttt{con}}
\newcommand{\algPartition}{\textsc{PartitionTrips}}
\newcommand{\misConflict}{\textsf{MIS}}
\newcommand{\misGreedy}{\textsf{C-MIS}}
\newcommand{\misGreedyEnhance}{\textsf{EC-MIS}}
\newcommand{\none}{\textsf{Nn}}
\newcommand{\both}{\textsf{Bo}}
\newcommand{\validInequalities}{\textsf{VI}}
\newcommand{\zcont}{\textsf{ZC}}
\newcommand{\mean}{\textsf{Mean}}
\newcommand{\percentile}{\textsf{75Per}}
\newcommand{\chanceSAA}{\textsf{CC}}
\newcommand{\La}[1]{\textsf{La#1}}
\tikzstyle{trip} = [draw,line width=1.1pt, blue!80!black, postaction={decorate}]
\tikzstyle{dtrip} = [draw, line width=0.4pt, red, postaction={decorate}]
\tikzstyle{depot} = [blue, fill=blue]
\tikzstyle{node trip} = [circle, draw = black, font=\scriptsize, inner sep=3pt]
\tikzstyle{node trip delayed} = [circle, draw = black, fill=gray!20!white, font=\scriptsize, inner sep=3pt]
\tikzstyle{node depot} = [circle, draw = gray, font=\scriptsize, inner sep=3pt]
\tikzstyle{arc} = [draw,line width=0.5pt,->, black]
\tikzstyle{pull arc} = [draw, dashed, line width=0.5pt,->, black]
\tikzstyle{tw} = [font=\scriptsize]
\tikzstyle{arc sol} = [arc, line width=1pt,
\tikzstyle{pull arc sol} = [pull arc, line width=1pt,
\begin{document}



\RUNTITLE{Incorporating Service Reliability in Multi-depot Vehicle Scheduling}

\TITLE{Incorporating Service Reliability in Multi-depot Vehicle Scheduling: A Chance-Constrained Approach}

\ARTICLEAUTHORS{%
\AUTHOR{Margarita P. Castro}
\AFF{Department of Industrial and Systems Engineering, Pontificia Universidad Católica de Chile, Santiago 7820436, Chile \\ \EMAIL{margarita.castro@ing.puc.cl}} 
\AUTHOR{Merve Bodur}
\AFF{School of Mathematics, University of Edinburgh, Edinburgh  EH9 3FD, UK, \\ \EMAIL{merve.bodur@ed.ac.uk}}
\AUTHOR{Amer Shalaby}
\AFF{Department of Civil and Mineral Engineering, University of Toronto, Toronto Ontario M5S 1A4, Canada, \\\EMAIL{amer.shalaby@utoronto.ca}}
} 

\ABSTRACT{%
The multi-depot vehicle scheduling problem (MDVSP) is a critical planning challenge for transit agencies. We introduce a novel approach to MDVSP by incorporating service reliability through chance-constrained programming (CCP), targeting the pivotal issue of travel time uncertainty and its impact on transit service quality. Our model guarantees service reliability 
measured by on-time performance (OTP), a primary metric for transit agencies, and fairness across different service areas. 
We propose an exact branch-and-cut (B\&C) scheme to solve our CCP model. We present several cut-generation procedures that exploit the underlying problem structure and analyze the relationship between the obtained cut families. Additionally, we design a Lagrangian-based heuristic to handle large-scale instances reflective of real-world transit operations. Our approach partitions the set of trips, each subset leading to a subproblem that can be efficiently solved with our B\&C algorithm, and then employs a procedure to combine the subproblem solutions to create a vehicle schedule that satisfies all the planning constraints of the MDVSP. Our empirical evaluation demonstrates the superiority of our stochastic variant in achieving cost-effective schedules with reliable OTP guarantees compared to alternatives commonly used by practitioners, as well as the computational benefits of our methodologies. 
}%

\maketitle

%



\section{Introduction}

Transit agencies face several decision-making problems to provide a high-quality service to their customers at a low cost, such as timetabling, crew scheduling, and vehicle scheduling \citep{ibarra2015planning}. In particular, vehicle scheduling problems constitute one of the most important problem classes to public transportation companies since they directly impact the service quality (e.g., waiting time of passengers) and the operational costs (i.e., fleet size and vehicle usage). These problems involve assigning timetabled trips to buses stationed at different depots to minimize the total operational costs.  
One of the most studied problems in the vehicle scheduling literature is the multi-depot vehicle scheduling problem (MDVSP). Its classic variant considers a fleet of homogeneous buses and multiple depots. Each bus is stationed at a particular depot (i.e., its starting location) and has to return to that depot at the end of the day. Each bus is assigned to a sequence of timetabled trips, where each trip has a start and end location, a scheduled start time, and a duration. Since trips can start and end in different locations, buses might travel empty from one location to the next, referred to as deadhead trips. The goal of the MDVSP is to find a vehicle schedule (i.e., trip assignments to buses) that minimizes the total cost given by the deployed fleet size (i.e., number of buses employed) and the operational cost associated with deadhead trips (e.g., fuel or electricity). In practice, a vehicle schedule corresponds to a single day of operation, which will be employed for several weeks (usually 2 to 3 months) given the seasonal demand fluctuations. 
    
The classic MDVSP has been vastly studied in the literature, given its relevance to practitioners \citep{ibarra2015planning} and its computational complexity \citep{bertossi1987some}. There are several problem variants in the literature, including extensions that consider fleets with electric vehicles \citep{fusco2013model,chao2013optimizing,tang2019robust}, and integrating the MDVSP with other transit problems such as crew scheduling \citep{mesquita2013decomposition} and timetabling \citep{liu2007regional}. However, only few works aim to address one of the main practical challenges of the MDVSP: the uncertainty in the travel times. Among these, the majority either considers dynamically changing the vehicle schedule in the operational stage \citep{guedes2018real,he2018vehicle}, which might be highly impractical, or studies simple stochastic variants that penalize expected delays thus obtaining deterministic models \citep{naumann2011stochastic,shen2016probabilistic, ricard2022increasing}, also ignoring the proper modeling of delay propagation among trips and their impact on service quality requirements except \citep{ricard2022increasing}. 
We further discuss these related works in detail in Section \ref{sec:literature}. 

On the other hand, practitioners usually rely on deterministic approaches where they overestimate travel times to hedge against possible delays, which results in vehicle schedules with higher costs and no guarantee on service quality or reliability. Agencies most commonly use the so-called on-time performance (OTP) measure for reliability. According to this metric, a bus is considered to be on time when it departs (or alternatively arrives) a predetermined bus stop within a certain range of its scheduled departure (or arrival) \citep{european2002transportation}, and OTP corresponds to the percentage of departing on time. The on-time range choice varies across the transit industry, the most commonly used one having been around 1-minute earlier and 5-minute later than the scheduled time \citep{guenthner1988distribution}. The 2018 data is summarized in \citep{TransitCenter} provides the ranges used by top 20 transit agencies in the United States and their weekday OTP ranging from 44\% to 75\%. In that regard, Figure \ref{fig:TTCandTfL} presents the recent information for Toronto, Ontario, Canada and London, United Kingdom. \cite{TTC} uses 1-minute early and 5-minute late for the on-time range, and releases a daily report on their website for OTP targets for their services and what is achieved (Figure \ref{subfig:TTC}). \cite{TfL} uses 2-minute early and 5-minute late for the OTP range, and releases a quarterly report on their bus services (Figure \ref{subfig:TfL}). 

\begin{figure}[t]
\centering
    \subfloat[\centering TTC Daily Customer Service Report on May 15, 2024.]{{\includegraphics[width=6.5cm]{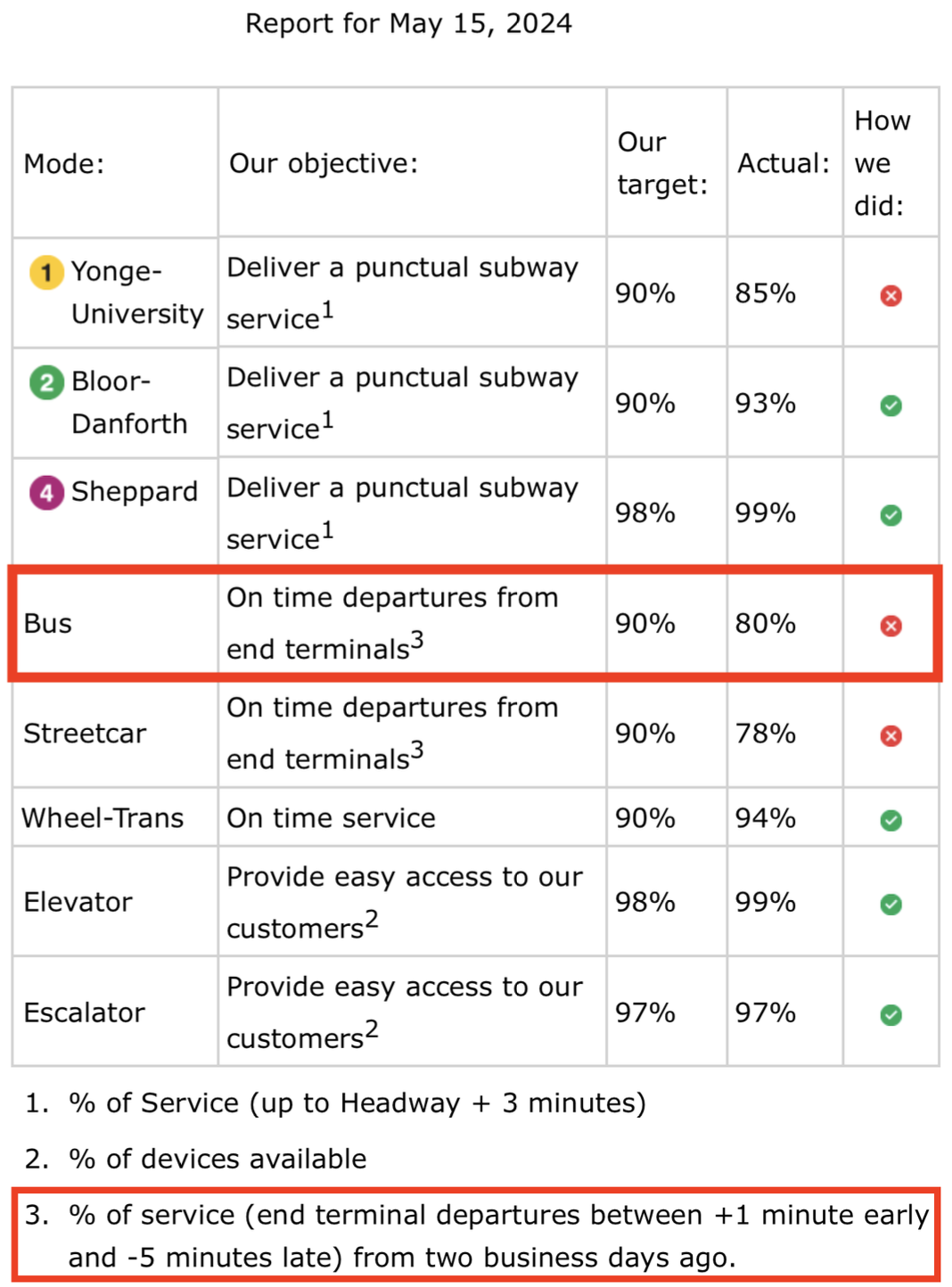} }
    \label{subfig:TTC}}
    \qquad
    \subfloat[\centering TfL Route Results for London Bus Services in 
Quarter 04 23/24.]{{\includegraphics[width=8cm]{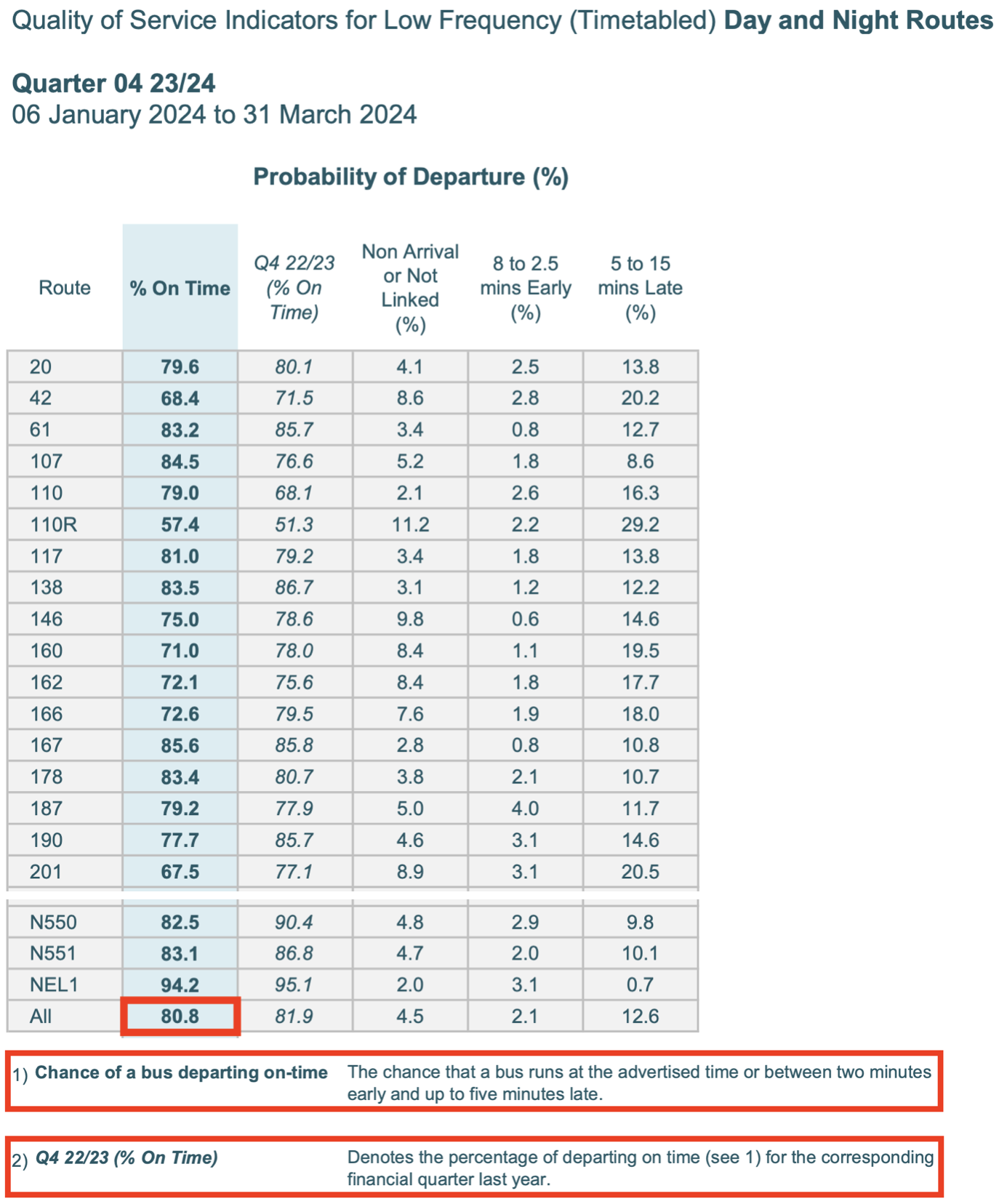} }
\label{subfig:TfL}}
    \caption{On-time performance statistics from Toronto and London bus systems.}%
    \label{fig:TTCandTfL}%
\end{figure}

Motivated by the practical importance of MDVSP and the limitations of the literature, we propose a novel stochastic MDVSP variant that considers stochastic travel times to achieve the desired service requirements defined based on OTP. The main goal is to build a cost-optimal schedule that satisfies the service requirements given by OTP. In that regard, we not only ensure that most trips have to start on time but also incorporate  fairness considerations into schedule  
(e.g., services on different areas must have similar OTP). Moreover, we consider a risk-averse setting to fulfill these service requirements given the high operational reliability and user satisfaction that transit agencies aim for in practice. Thus, we model the problem as a chance-constrained programming (CCP) model, which enforces the service requirements up to a certain probability threshold.  From now own, we call this problem the chance-constrained MDVSP (CC-MDVSP). 
    
We propose exact and heuristic methodologies to address the scenario reformulation of CC-MDVSP.   
Our exact procedure is a branch-and-cut (B\&C) scheme that iteratively offers a cost-efficient vehicle schedule and evaluates the service conditions in a set of scenarios given by different travel time realizations. We present several cut generation algorithms based on different problem characteristics and analyze their theoretical properties. In particular, we show that our subproblems can be solved via a greedy algorithm in polynomial time, and more importantly their solutions can be used in an iterative procedure to efficiently identify minimal infeasible subsystems (MISs) and accordingly cuts, which we call constraint-based MIS cuts, stronger than the traditionally used alternatives. 
Moreover, we design a heuristic procedure based on our exact methodology to handle realistic large-scale problems. More specifically, we consider a Lagrangian scheme that decomposes the problem into sub-problems with a smaller set of trips, which can be efficiently solved with our exact B\&C methodology. We then solve the Lagrangian dual problem that enforces the service requirements across all trips to find a cost-efficient solution for the complete problem. 
    
We perform extensive computational experiments which 
show that our proposed methodologies return cost-efficient vehicle schedules that fulfill the service requirements. 
Comparisons with those commonly used by practitioners (i.e.,  deterministic models with over-estimations of the travel times) 
illustrate that our techniques find lower-cost schedules with reliable service guarantees. 
    
The remainder of the paper is organized as follows. Section \ref{sec:literature} provides a literature review on the MDVSP, its stochastic variants, and related CCP problems. Section \ref{sec:problem} describes our novel stochastic MDVSP variant with service requirements and presents our CCP model. Section \ref{sec:decomposition} introduces our scenario reformulation and or the B\&C  decomposition approach. Section \ref{sec:cutgeneation} presents several cut generation alternatives, methodological enhancements, and theoretical results. Section \ref{sec:large-scale} describes our Lagrangian-based scheme to handle large-scale problems. Lastly, Section \ref{sec:experiments} presents the numerical experiment results and we end with some concluding remarks in Section \ref{sec:conclusions}.
\section{Literature Review} \label{sec:literature}

We review the two main bodies of literature that are closely related to our work. Section \ref{sec:lit-mdvsp} describes relevant works in the MDVSP literature, emphasizing existing exact algorithms for the deterministic and stochastic variants of this problem. Section \ref{sec:lit-cc} reviews relevant works on CCP, especially ones that handle chance constraints with integer variables. 

\subsection{Multi Depot Vehicle Scheduling and Extensions} 
\label{sec:lit-mdvsp}


The deterministic MDVSP is 
well-studied due to its practical application to public transit agencies \citep{ibarra2015planning} and its combinatorial structure which makes the problem NP-hard for the case with two or more depots \citep{bertossi1987some}. Several models and optimization techniques have been proposed to tackle this challenging problem (see, e.g.,  \cite{bunte2009overview, pepin2009comparison}). \cite{carpaneto1989branch} propose the first exact algorithm for MDVSP based on the branch-and-bound procedure and a single-commodity model with sub-tour breaking constraints, which was then improved by \cite{fischetti2001polyhedral} via a branch-and-cut approach.  

One of the most used models to solve the MDVSP is the time-space formulation \citep{kliewer2006time} that discretizes time to consider each trip's start and end time points. This formulation tends to be quite large in the number of variables, but can be 
solved using a branch-and-price (B\&P) algorithm. \cite{kulkarni2018new} present an inventory formulation that also relies on a time-space network. A column generation approach is proposed to solve the linear programming relaxation along with a heuristic procedure to find high-quality integer solutions. 

Alternatively, \cite{desrosiers1995time} propose a multi-commodity flow (MCF) model with a cubic number of variables concerning the number of trips and depots. \cite{hadjar2006branch} studied the MCF polyhedral structure and proposed several enhancements, including a reduced cost-fixing strategy and a branch-and-cut approach to remove fractional solutions via odd-cycle cuts. We note that the MCF model is more amenable to our variant with stochastic travel times than the time-space formulations because the former implicitly considers time when computing compatible trips, while the latter explicitly uses time points to create the model. 

One of the main practical limitations of the deterministic MDVSP is that it ignores the stochastic nature of travel times and, thus, its delayed consequences. Since delays and disruptions in service are a major concern for transit agencies, several works have built deterministic contingency models to fix the schedule during an operational day, such as rescheduling \citep{li2007vehicle,guedes2018real}, real-time recovery \citep{visentini2014review}, damaging disruptions \citep{uccar2017managing} and trip-shifting strategies \citep{desfontaines2018multiple}, among others. However, these techniques require real-time data and a coordinator to make timely decisions, which are very costly for transit agencies, especially if they have to be made most days. 

Surprisingly, only few works consider stochastic travel times while creating the vehicle schedule to reduce the need for such contingency options during operation. \cite{naumann2011stochastic} base their MDVSP formulation on 
the time-space model of \cite{kliewer2006time}. 
In their approach, delay values for each trip (except depot and deadhead trips) are generated from an exponential distribution to construct a set of delay scenarios.  
The idea is to add penalties (i.e., additional cost) to the waiting arcs in the network, which connect pairs of trips and have an associated predetermined waiting time. More specifically, given a consecutive trip and waiting arc combination, if the sampled delay of the trip leads to a delay in the subsequent trip, then a quadratic penalty cost is added to the waiting arc cost. Despite having delay scenarios, the resulting formulation becomes a deterministic model where expected delay penalty costs are added to the waiting arcs. 
\cite{shen2016probabilistic} and \cite{shen2017vehicle}  follow a similar approach, except base their formulation on the MCF model, ending up with a deterministic model where additional costs are added to the trip arcs. Therefore, these approaches fall short in properly modeling the  
propagation effects of the delays.

Since the delay propagation for consecutive trips can have a massive impact in practice, recent works have considered dynamic approaches that build and repair the schedule during the operational day. \cite{he2018vehicle} propose an approximated dynamic programming approach to build schedules daily that minimize cost and reduce delays of consecutive trips. \cite{tang2019robust} introduce a dynamic and static model that incorporates the need to charge a fleet of electric buses. The static model uses a buffer-distance strategy to protect from delays, while the dynamic model periodically reschedules the bus fleet during a day's operations. We note that none of these approaches actually create a robust schedule that performs well in practice without the need to reschedule (i.e., return a reliable schedule). 

To the best of our knowledge, the recent work by \cite{ricard2022increasing} is the only one that aims to create a reliable vehicle schedule using stochastic travel times and consider proper delay propagation. A set partition model is proposed, which enumerates all possible trip sequences, thus is amenable to 
B\&P. Each sequence considers the operation cost plus a delay penalty,  calculated using discretized probability mass function to account for delay propagation \citep{ricard2022predicting}. As in the previously reviewed works, the resulting model is deterministic since the stochastic travel times are only used to compute the expected delays, which is then transformed into a penalty in the objective function. Moreover, their model is sensitive to the weight assigned to the delay penalty cost, and, as such, users need to tune this parameter to obtain schedules with the desired reliability. In contrast, our CC-MDVSP variant includes the service reliability conditions directly in the model via a CCP that optimizes for a risk-averse setting (i.e., ensure for instance 95\% of the days with a reliable service, along with fairness among routes). Also, we measure reliability based on the number of trips that start later than their scheduled start time as commonly done in transit agencies (e.g., in \cite{TTC, TfL}), while \cite{ricard2022increasing} consider metrics focused on the public transit user's perspective.





\subsection{Chance Constrained Programming} \label{sec:lit-cc}

CCP is a well-studied modeling paradigm in stochastic programming that considers stochastic constraints to be satisfied with a certain probability \citep{charnes1959chance}. While these constraints can be cast as conic constraints for parameters with Gaussian or similar distribution \citep{kuccukyavuz2021chance}, handling other types of distribution usually requires some sort of sampling and scenario reformulation \citep{pagnoncelli2009sample}. 

\cite{ruszczynski2002probabilistic} first introduced the scenario reformulation for CCP when considering a discrete distribution. This reformulation replaces the random variables with a set of scenario and binary variables, making the problem non-convex and significantly increasing its size. This procedure can be extended for probability distributions with infinite support using sample average approximation (SAA), i.e., reformulate the problem with a proper number of scenarios and obtain statistically valid lower and upper bounds to the original problem \citep{luedtke2008sample}.

While the number of scenarios can be reduced using SAA, the resulting model is still a large-scale mixed-integer linear program (MILP) with a weak linear programming (LP) relaxation given by a large set of big-M constraints. \cite{luedtke2010integer} study the structure of the problem when considering only right-hand-side uncertainty and propose an improved relaxation for the case with only continuous variables. There is also significant research on valid inequalities mechanism to strengthen the big-M coefficients (see, e.g., \cite{kuccukyavuz2012mixing,song2014chance,ahmed2018relaxations}). 

One of the most common approaches to deal with the scenario-based MILP is B\&C  \citep{luedtke2014branch}. The procedure decomposes the problem into a master problem containing all the deterministic constraints and one subproblem for each scenario. Given a fixed candidate solution, the B\&C  solves the subproblems for each scenario and returns a cut if the solution violates the CC feasibility set for that scenario. Although general, this procedure is usually tailored for each application depending on the structure of the subproblem and the nature of its variables. There are few applications in the literature that consider CCP problems with integer variables, such as scheduling \citep{deng2016decomposition}, vehicle routing \citep{dinh2018exact} and partial set covering \citep{wu2019probabilistic}. These works make problem-specific variations of the B\&C decomposition, and all their integer variables are part of the master problem. In contrast, our problem considers integer variables in both the master and subproblem, a case with scarce literature. Thus, we propose specialized procedures to generate cuts given the subproblem non-convexity. 

\cite{canessa2019algorithm} propose an algorithm to solve one-stage pure binary linear CCPs using irreducibly infeasible subsystems (IIS).
In a B\&C framework, they solve the restriction of the problem where all scenario constraints are enforced (except those corresponding to the scenarios that are eliminated by the branching decisions, if any). When such a restriction is infeasible, they generate an IIS using a commercial solver and generate a cut on the scenario variables enforcing that at least one among the identified minimal set should be eliminated. They can generate a similar IIS cut also in the case where the restricted problem is feasible, by adding a constraint bounding the optimal value and turning the restricted problem into one looking for an improving feasible solution. Being only on the scenario variables, their cuts do not capture   any relationship between the scenario variables and the original variables of the model. 
In contrast, to solve a two-stage CCP with integer recourse, our proposed methodology relies on a B\&C procedure where we identify IIS (i.e., MIS) based on the recourse (i.e., second-stage) problems and generate cuts linking the original first-stage variables (i.e., vehicle scheduling variables) and the scenario variables. 
Moreover, leveraging our problem structure, we introduce a polynomial procedure to identify IIS as well as some cut strengthening procedures. Our newly proposed  constraint-based MIS (C-MIS) and extended C-MIS cuts can be useful for some other applications.

\section{Problem Description and Formulation} \label{sec:problem}

We now describe the proposed stochastic variant of the MDVSP that ensures service requirements in a risk-averse setting. In what follows, we use calligraphic font for sets, uppercase letters for the sets' cardinality, and lowercase letters for the sets' elements if possible (e.g., $a\in \A$ is an element of set $\A$ with cardinality $|\A|=A$), and typewriter font for parameters (e.g., $\texttt{b}$ is a parameter for the problem). Also, we use $\Prob(\cdot)$ as the probability operator and $\Exp(\cdot)$ as the expected value operator. 

The MDVSP considers a set of $\numtrips$ timetabled trips, $\trips=\{1,...,\numtrips\}$, and a set of $\numdepots$ depots, $\depots=\{1,...,\numdepots\}$.  Each depot $k \in \depots$ has a capacity $\capacity_k$ that represents the maximum number of buses that can be placed at depot $k$. As in the standard deterministic variant (e.g., in \cite{carpaneto1989branch}), we consider a homogeneous bus fleet (i.e., all buses are the same), so a timetabled trip $i \in \trips$ can be assigned to any bus which can be associated with any depot $k \in \depots$. Buses must start and end their tour (i.e., sequence of their assigned timetabled trips) at their associated depot.

Let $\xi$ represent the underlying stochastic process associated with this problem; that is, $\xi$ is the set of random variables representing bus travel times. Each timetabled trip $i \in \trips$ has a scheduled start time $\stime_i$, a start location $\locstart_i$, an end location $\locend_i$, and a stochastic duration $\dur_i(\xi)$ that represents the time to go from its start location $\locstart_i$ to its end location $\locend_i$. 

In addition to the set of timetabled trips $\trips$, we define deadhead trips as bus movements to reach the start location of a trip or return to its associated depot. Specifically, a bus associated with depot $k \in \depots$ performs a deadhead trip if it travels from: 
\begin{enumerate*}[label=(\roman*)]
    \item $k$ to the start location of trip $i \in \trips$,
    \item the end location of $i$ to the start location of $j\in \trips$ ($i \neq j$), or  
    \item the end of location $i$ to depot $k$.
\end{enumerate*}
Then, the random variables $\travel_{ij}(\xi)$, $\travel_{ki}(\xi)$, and $\travel_{ik}(\xi)$ represent the stochastic travel time of deadhead trips between timetabled trips $i,j \in \trips$ and from/to the depot $k \in \depots$, accordingly.

Our CC-MDVSP variant seeks a feasible assignment of timetabled trips to buses such that each trip is assigned to a single bus and the scheduled starting time is met as closely as possible. Thus, each bus assignment corresponds to a sequence of timetabled trips such that there is one deadhead trip between each timetabled trip and one deadhead trip to leave and return to the depot. Then, a solution is a vehicle schedule representing the sequence of timetabled trips for each deployed bus. Since we consider a homogeneous bus fleet, it is sufficient to associate trip sequences with depots. 

The objective of the problem is to minimize operational costs.
Given that all timetabled trips are assigned to buses, operational costs are associated with deadhead trips. In that regard, $\cost_{ij}\geq 0$ for trips $i,j \in \trips$ corresponds to the operational cost of scheduling trip $i$ right before trip $j$ on the same bus, which represents, for instance, the fuel 
consumption of the deadhead trip. Analogously, $\cost_{ki}\geq 0$ and $\cost_{ik}\geq 0$ represent the cost of scheduling $i\in \trips$ as the first and last trip of a bus associated with depot $k\in \depots$, respectively. We note that these depot-related deadhead trips typically include the cost of deploying a bus (e.g., the driver's salary) and the operational cost associated with the travel distance (e.g., fuel cost).
Example \ref{exa:mdvsp_instance} presents a small instance of the deterministic MDVSP that illustrates the previously described concepts. 

\begin{example} \label{exa:mdvsp_instance}
    Consider a deterministic MDVSP instance depicted in Figure \ref{fig:example-grid}. Each $10\times7$ grid corresponds to the same MDVSP instance with two depots, $k_1, k_2\in \depots$, positioned at coordinates (1,2) and (8,3), respectively. Each thick blue line corresponds to a timetabled trip, where the arrow points towards the end location and the number right next to it represents its index $ i \in \trips = \{1, \dots, 8\}$. For example, trip 1 starts at location $\locstart_1 = (2,1)$ and ends in $\locend_1 = (2,6)$.  Red thin lines correspond to deadhead trips for a specific vehicle schedule. For instance, the line connecting $k_1$ and $\locstart_1$ in both grids corresponds to the deadhead trips from the first depot to the start of trip 1.

    The timetable in Figure  \ref{fig:example-grid} shows the start time and average duration of each trip measured in units of time. In the deterministic setting, we consider each cell distance to represent one unit of time and one unit of operational cost. Moreover, deadhead trips associated with depots have an additional fixed cost of 2, representing bus deployment. For example, the deadhead trip from $k_1$ to trip 1 has an average duration $\bar{t}_{k_1, 1} = 2$ and cost $c_{k_1,1} = 4$.  Both grids represent feasible vehicle schedules for the deterministic setting with average times; that is, each trip $ i \in \trips$ can start at the required $\stime_i$ time. Each solution employs two buses (one per depot), and both solutions have the same total cost of 20 (16 units of operational cost and 8 units for deploying the 2 buses). \hfill $\square$
\end{example}

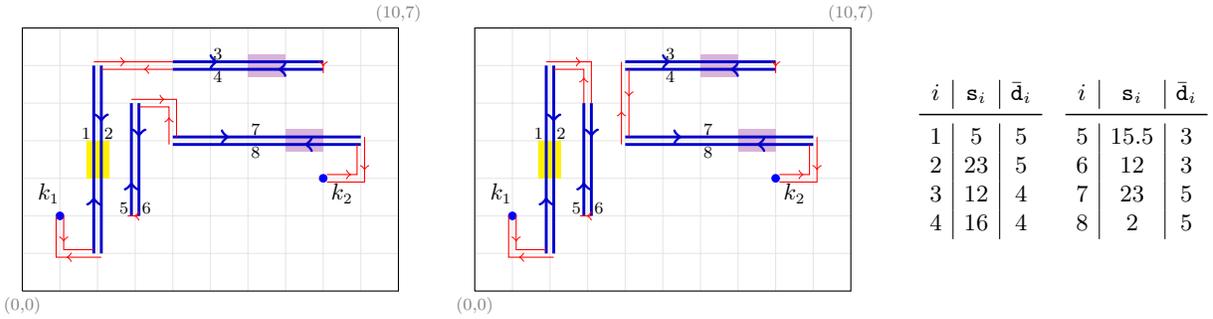
\begin{figure}[htb]
    \centering
    \begin{minipage}{0.75\textwidth}
        \centering
        \begin{tikzpicture}[scale=0.5, every node/.style={scale=0.8}]
    \draw[step=1,gray!20!white,thin] (0,0) grid (10,7);
    \draw[black] (0,0) -- (0,7) -- (10,7) -- (10,0) -- (0,0); 

    \node (00) at (0, -0.4) {\scriptsize \color{gray} (0,0)};
    \node (107) at (10, 7.4) {\scriptsize \color{gray} (10,7)};
    \draw[blue, fill=blue] (1,2) circle (0.1); 
    \node (k1) at (0.7, 2.6) {$k_1$};
    \draw[blue, fill=blue] (8,3) circle (0.1); 
    \node (k2) at (8.5, 2.6) {$k_2$};

    \fill[yellow] (1.7,3) -- (1.7,4) -- (2.3,4) -- (2.3,3); 
    
    \fill[violet!30!white] (6,5.7) -- (7,5.7) -- (7,6.3) -- (6,6.3); 
    \fill[violet!30!white] (7,3.7) -- (8,3.7) -- (8,4.3) -- (7,4.3); 

    \begin{scope}[ decoration={markings, mark=at position 0.3 with {\arrow{>}}} ] 
        \draw[trip] (1.9,1) -- (1.9,6); 
        \draw[trip] (2.1,6) -- (2.1,1);   
        \node (i1) at (1.7, 4.2) {\scriptsize 1};
        \node (i2) at (2.3, 4.2) {\scriptsize 2};

        \draw[trip] (4,6.1) -- (8,6.1); 
        \draw[trip] (8,5.9) -- (4,5.9);   
        \node (i3) at (5.2, 6.3) {\scriptsize 3};
        \node (i4) at (5.2, 5.7) {\scriptsize 4};

        \draw[trip] (4,4.1) -- (9,4.1); 
        \draw[trip] (9,3.9) -- (4,3.9);   
        \node (i3) at (6.2, 4.3) {\scriptsize 7};
        \node (i4) at (6.2, 3.7) {\scriptsize 8};

        \draw[trip] (2.9,2) -- (2.9,5); 
        \draw[trip] (3.1,5) -- (3.1,2);   
        \node (i5) at (2.7, 2.2) {\scriptsize 5};
        \node (i6) at (3.3, 2.2) {\scriptsize 6};
    \end{scope}

    \begin{scope}[ decoration={markings, mark=at position 0.4 with {\arrow{>}}} ] 
        \draw[dtrip] (1.1,2) -- (1.1,1.1) -- (1.9,1.1) ; 
        \draw[dtrip] (2.1,0.9) -- (0.9,0.9) -- (0.9,2) ; 

        \draw[dtrip] (1.9,6.1) -- (4,6.1) ; 
        \draw[dtrip] (4,5.9) -- (2.1,5.9) ; 
        \draw[dtrip] (8,6.1) -- (8,5.8); 

        \draw[dtrip] (8.1,3.1) -- (8.9,3.1) -- (8.9,3.9) ; 
        \draw[dtrip] (9.1,4.1) -- (9.1,2.9) -- (8.1,2.9) ; 

        \draw[dtrip] (3.9,3.9) -- (3.9,4.9) -- (3.1,4.9) ; 
        \draw[dtrip] (2.9,5.1) -- (4.1,5.1) -- (4.1,4.1) ; 
        \draw[dtrip] (3.1,2) -- (2.8,2); 
    \end{scope}
    
\end{tikzpicture}
        \begin{tikzpicture}[scale=0.5, every node/.style={scale=0.8}]
    \draw[step=1,gray!20!white,thin] (0,0) grid (10,7);
    \draw[black] (0,0) -- (0,7) -- (10,7) -- (10,0) -- (0,0); 

    \node (00) at (0, -0.4) {\scriptsize \color{gray} (0,0)};
    \node (107) at (10, 7.4) {\scriptsize \color{gray} (10,7)};
    
    \draw[blue, fill=blue] (1,2) circle (0.1); 
    \node (k1) at (0.7, 2.6) {$k_1$};
    \draw[blue, fill=blue] (8,3) circle (0.1); 
    \node (k2) at (8.5, 2.6) {$k_2$};

    \fill[yellow] (1.7,3) -- (1.7,4) -- (2.3,4) -- (2.3,3); 
    
    \fill[violet!30!white] (6,5.7) -- (7,5.7) -- (7,6.3) -- (6,6.3); 
    \fill[violet!30!white] (7,3.7) -- (8,3.7) -- (8,4.3) -- (7,4.3); 
    
    \begin{scope}[ decoration={markings, mark=at position 0.3 with {\arrow{>}}} ] 
        \draw[trip] (1.9,1) -- (1.9,6); 
        \draw[trip] (2.1,6) -- (2.1,1);   
        \node (i1) at (1.7, 4.2) {\scriptsize 1};
        \node (i2) at (2.3, 4.2) {\scriptsize 2};

        \draw[trip] (4,6.1) -- (8,6.1); 
        \draw[trip] (8,5.9) -- (4,5.9);   
        \node (i3) at (5.2, 6.3) {\scriptsize 3};
        \node (i4) at (5.2, 5.7) {\scriptsize 4};

        \draw[trip] (4,4.1) -- (9,4.1); 
        \draw[trip] (9,3.9) -- (4,3.9);   
        \node (i3) at (6.2, 4.3) {\scriptsize 7};
        \node (i4) at (6.2, 3.7) {\scriptsize 8};

        \draw[trip] (2.9,2) -- (2.9,5); 
        \draw[trip] (3.1,5) -- (3.1,2);   
        \node (i5) at (2.7, 2.2) {\scriptsize 5};
        \node (i6) at (3.3, 2.2) {\scriptsize 6};
    \end{scope}

    \begin{scope}[ decoration={markings, mark=at position 0.4 with {\arrow{>}}} ] 
        \draw[dtrip] (1.1,2) -- (1.1,1.1) -- (1.9,1.1) ; 
        \draw[dtrip] (2.1,0.9) -- (0.9,0.9) -- (0.9,2) ; 

        \draw[dtrip] (3.9,3.9) -- (3.9,6.1); 
        \draw[dtrip] (8,6.1) -- (8,5.8); 
        \draw[dtrip] (4.1,5.9) -- (4.1,4.1); 

        \draw[dtrip] (8.1,3.1) -- (8.9,3.1) -- (8.9,3.9) ; 
        \draw[dtrip] (9.1,4.1) -- (9.1,2.9) -- (8.1,2.9) ; 

        \draw[dtrip] (1.9,6.1) -- (3.1,6.1) -- (3.1,5) ; 
        \draw[dtrip] (2.9,5) -- (2.9,5.9) -- (2.1,5.9) ; 
        \draw[dtrip] (3.1,2) -- (2.8,2); 
    \end{scope}
    
\end{tikzpicture}
    \end{minipage}
    \begin{minipage}{0.11\textwidth}
    \footnotesize	
        \begin{tabular}{c|c|c}
    $i$     & $\stime_i$ & $\bar{\dur}_i$ \\
    \midrule
    1     & 5    & 5 \\
    2     & 23   & 5 \\
    3     & 12   & 4 \\
    4     & 16   & 4 \\
\end{tabular}%

    \end{minipage}
    \begin{minipage}{0.11\textwidth}
    \footnotesize	
        \begin{tabular}{c|c|c}
    $i$     & $\stime_i$ & $\bar{\dur}_i$ \\
    \midrule
    5     & 15.5  & 3 \\
    6     & 12   & 3 \\
    7     & 23    & 5 \\
    8     & 2     & 5 \\
\end{tabular}%

    \end{minipage}
    \caption{Example for the deterministic and CC-MDVSP. Both grids represent the same problem instance with different vehicle schedules each. The right-hand side shows the average timetable information.   }
    \label{fig:example-grid}
\end{figure}

Our stochastic CC-MDVSP considers all the aforementioned characteristics of the MDVSP, which we call the \emph{planning} portion of the problem. In addition, we consider a set of \emph{service (reliability) requirements} that need to be fulfilled in a risk-averse setting (i.e., achieve the requirements most of the days), which correspond to the \emph{operational} portion of the problem. Motivated by practical considerations (e.g., from \cite{TTC}), we consider an on-time performance (OTP) metric for the start time of each timetabled trip as well as  fairness metrics for related timetabled trips:
\begin{itemize}
    \item \underline{\textit{OTP.}} We say that trip $i\in \trips$ \emph{starts on time} if the execution start time is inside the time window $[\stime_i - \lb, \stime_i + \ub]$, with given $\lb,\ub\geq 0$. Then, we say that a vehicle schedule \emph{fulfills the OTP requirements} if at least $\numserviceTrip =\lfloor\numtrips\cdot\serviceTrips\rfloor$ trips start on time, where $\serviceTrips \in (0,1]$ represents the minimum proportion of the timetabled trips that have to start on time. We note that due to practical considerations, any trip $i$ is not allowed to start earlier than $\stime_i - \lb$, even if the assigned bus arrives earlier; however, it can start later than $\stime_i + \ub$, which would be then identified as \emph{delayed}. On the other hand, we assume that the very first trip of each bus (i.e., the one scheduled right after leaving a depot) starts as early as possible, namely at $\stime_i - \lb$ for such a trip $i \in \trips$.

    \item \underline{\textit{Fairness.}} These conditions ensure that timetabled trips associated with different routes (i.e., a set of trips with common start and end locations) have similar OTP. Specifically, we consider a set of $\numroute$ routes $\routes=\{1,...,\numroute\}$, where a route $r \in \routes$ corresponds to a subset of trips $\trips(r) \subseteq \trips$ with a common start and end location. We note that every trip is part of a single route, thus, sets $\trips(r)$ for $r\in \routes$ constitute a partition of $\trips$. Then, the fairness requirements of a route $r \in \routes$ impose that a proportion of the trips in $\trips(r)$ start on time, that is, at least $\numserviceRoute_r = \lfloor \numtrips^r \cdot \serviceRoute \rfloor$ trips start on time for $I^r= |\trips(r)|$ and a given $\serviceRoute\in (0,1]$. If this condition is satisfied for each route, then we say that a vehicle schedule \emph{fulfills the fairness requirements}.
\end{itemize}

We model these service requirements using a joint chance constraint to represent the risk-averse behavior of transit agencies, that is, fulfill the service requirements for most business days. In particular, we impose that all service requirements must be achieved with a probability of at least $1-\epsilon$, where  $\epsilon \in (0,1)$ is the risk tolerance or, more specifically, the probability threshold of not fulfilling at least one service condition (i.e., violating one of the OTP and fairness requirements).

Similarly to \cite{ricard2022increasing}, we consider a simple model that incorporates these service requirements, which only considers the direct and indirect trips' delays (i.e., delay propagation). In addition, our CC-MDVSP variant considers a recourse action in which bus drivers can reduce their travel times to avoid violating the service requirements. For example, in many public transit systems, drivers might slightly increase the speed to reduce travel time, a practice commonly known as \textit{expressing} \citep{eberlein1997real}. To model this action, we consider $\express_i$ as the maximum units of time reduction that can be made for a timetabled trip $i\in \trips$ to help start subsequent trips on time and, in turn, achieve the OTP.

\begin{example} \label{exa:mdvsp_instance_cc}
We now extend Example \ref{exa:mdvsp_instance} to illustrate the aforementioned metrics for the CC-MDVSP. We consider that trips with inverse directions correspond to the same route (e.g., trips 1 and 2 are on the same route); thus, this instance has 4 routes with 2 timetable trips each. For simplicity, the stochastic setting has two scenarios with equal probability and $\epsilon = 0.5$, that is, at least one scenario has to fulfill the service requirements to satisfy the CC. We set parameters $\lb=\ub=0$, $\express_i = 0$ for all $i \in \trips$, $\serviceTrips = 0.85$ and $\serviceRoute = 0.5$. Thus, a vehicle schedule in a specific scenario fulfills the service requirements if at least $\numserviceTrip =7$ trips start on time and each route has at most $\numserviceRoute = 1$ delayed trip. 

Figure \ref{fig:example-grid} illustrates the traveling times of each scenario using different color schemes: scenario 1 in yellow and scenario 2 in purple. Both scenarios consider average traveling times for trips traversing plane grid edges and an increase of 0.5 units of time if they cross a colored edge associated with each scenario. For example, in scenario 1, trips 1 and 2 have both an increase of 0.5 units of times (i.e., a duration of 5.5 each), but in scenario 2 their duration remains the same.  

The left schedule of Figure \ref{fig:example-grid} violates CC. In particular, trips 3 and 4 are delayed in the first scenario, thus, violating the OTP and fairness constraint. Also, trips 2 and 4 are delayed in the second scenario, which violates the OTP constraint. In contrast, the right schedule is a feasible schedule for the CC-MDVSP since the schedule satisfies the service requirement for both scenarios. Specifically, only one trip is delayed in each scenario (i.e., trip 6 in scenario 1 and trip 4 in scenario 2), which satisfies both the OTP and fairness constraints. \hfill $\square$
\end{example}

\subsection{Multi-commodity Flow Formulation}
We now present a model of the CC-MDVSP. In what follows, we use lowercase bold letters for vectors of decision variables and sub-indices to refer to each variable (e.g., $\bx=(x_1,x_2,x_3)^\top \in \R^3$ is a vector of continuous decision variables).

As discussed in Section \ref{sec:literature}, several deterministic MDVSP models can be considered for an extension to our chance-constrained setting. Most of these formulations rely on a time-space network flow model (e.g., \citep{kliewer2006time} and \citep{kulkarni2018new}) that employ trips' durations to define nodes in the network. Since our problem considers stochastic travel times, this would require building a different time-space network for each possible realization of $\xi$. Alternatively, the MCF formulation \citep{desrosiers1995time} relies on a network that relates timetabled trips that can be sequentially scheduled to the same bus and does not explicitly consider the travel times. 
Therefore, we propose an  MCF formulation for the CC-MDVSP. This formulation represents the problem with a network $\graph=(\nodes,\arcs)$, where $\nodes$ is the set of nodes and $\arcs$ is the set of arcs. We consider one node for each timetable trip $i\in \trips$ and two nodes for each depot $k \in \depots$, that is, $\nodes = \trips \cup \depotsStart \cup \depotsEnd$, where $\depotsStart$ and $\depotsEnd$ are copies of $\depots$ that represent the start and end depots of a vehicle schedule, respectively. 

The set of arcs is given by the compatibility set of timetabled trips $\compatible$, that is, the set of trip pairs that can be scheduled in the same bus. In the deterministic MDVSP, we say that two trips $i,j \in \trips$ are compatible if the scheduled end time of trip $i$ plus the travel time of the deadhead trip from $i$ to $j$ is less than or equal to the scheduled start time of trip $j$. Thus, the compatibility set is
$ \compatible^{\texttt{det}} = \{ (i,j) \in \trips\times \trips: \; \stime_i + \dur_i + \travel_{ij} \leq \stime_j  \}$.
However, travel times in the CC-MDVSP are random variables, so we consider a general representation of $\compatible$ using $\approxdur_i$ and $\approxtravel_{ij}$ as estimates of the travel times for $i,j \in \trips$. For example, a conservative approach sets $\approxdur_i =0$ or $\approxdur_i =\min\{\dur_i(\xi)\}$ (where the minimum is taken over the support of $\xi$), while an average approach considers $\approxdur_i = \Exp[\dur_i(\xi)]$, as in the deterministic MDVSP literature; similarly treating the $\approxtravel$ parameters. Thus, the \emph{planning compatibility set} for the CC-MDVSP is:
\[ 
\compatible = \{ (i,j) \in \trips\times \trips: \; \stime_i + \approxdur_i + \approxtravel_{ij} \leq \stime_j  \}. \]
whose member pairs we refer to as \emph{planning compatible}. 
Then, the set of arcs in the network is given by $\arcs = \compatible \cup \{(k,i): i \in \trips, k\in \depotsStart\} \cup \{(i,k): i \in \trips, k \in \depotsEnd\}$, that is, there is a directed arc: 
\begin{enumerate*}[label=(\roman*)]
    \item for each pair of compatible trips,
    \item from each start depot to every trip, and
    \item from each trip to every end depot.
\end{enumerate*}
Note that set $\arcs$ has cardinality  $\numarcs = |\compatible| + 2KI$, and it considers that all trips can be directly linked to a depot.

\begin{example}
Figure \ref{fig:example-network} shows the network $\graph=(\nodes,\arcs)$ for Example \ref{exa:mdvsp_instance_cc} using the planning compatibility set $\compatible$ with average times. Gray nodes represent the depot copies (i.e.,  with $k_1^s, k_2^s \in \depotsStart$ and $k_1^e, k_2^e \in \depotsEnd$) and black nodes correspond to timetable trips. Solid arrows link node trips that are in the planning compatibility set.  Dashed arrows represent pull-out and pull-in arcs \citep{kliewer2006time}, that is, deadhead trips from and to the depots, respectively. We represent a subset of all pull-in and pull-out arcs in the network to avoid overcrowding the graph. The network also illustrates the schedule in the left grid of Figure \ref{fig:example-grid} with shaded arcs. 
\hfill $\square$
\end{example}

\begin{figure}[htb]
    \centering
    \begin{tikzpicture}[->,>=stealth',shorten >=1pt,auto,node distance=1cm]

\node[node depot] (k1s) at (0,1) {$k_1^s$};
\node[node depot] (k2s) at (0,-1) {$k_2^s$};

\node[node trip] (n1) at (2,1) {$1$};
\node[node trip] (n8) at (2,-1) {$8$};

\node[node trip] (n3) at (3.5,0.7) {$3$};
\node[node trip] (n6) at (3.5,-0.7) {$6$};

\node[node trip] (n4) at (5.2,1.5) {$4$};
\node[node trip] (n5) at (5.2,-1.5) {$5$};

\node[node trip] (n2) at (7,1.1) {$2$};
\node[node trip] (n7) at (7,-1.1) {$7$};

\node[node depot] (k1e) at (9,1) {$k_1^e$};
\node[node depot] (k2e) at (9,-1) {$k_2^e$};

\path[every node/.style={font=\sffamily\small}]
(k1s) 
edge[pull arc sol] node [left] {} (n1)
edge[pull arc] node [left] {} (n8)
(k2s)
edge[pull arc] node [left] {} (n1)
edge[pull arc sol] node [left] {} (n8)
(n1)
edge[arc sol] node [left] {} (n3)
edge[arc] node [left] {} (n5)
edge[arc] node [left] {} (n4)
edge[arc] node [left] {} (n2)
edge[arc] node [left] {} (n7)
(n8)
edge[arc] node [left] {} (n3)
edge[arc sol] node [left] {} (n6)
edge[arc] node [left] {} (n5)
edge[arc] node [left] {} (n4)
edge[arc] node [left] {} (n2)
edge[arc] node [left] {} (n7)
(n3)
edge[arc sol] node [left] {} (n4)
edge[arc] node [left] {} (n2)
edge[arc] node [left] {} (n7)
(n6)
edge[arc sol] node [left] {} (n5)
edge[arc] node [left] {} (n2)
edge[arc] node [left] {} (n7)
(n5)
edge[arc] node [left] {} (n2)
edge[arc sol] node [left] {} (n7)
(n4)
edge[arc sol] node [left] {} (n2)
edge[arc] node [left] {} (n7)
(n2)
edge[pull arc sol] node [left] {} (k1e)
edge[pull arc] node [left] {} (k2e)
(n7)
edge[pull arc] node [left] {} (k1e)
edge[pull arc sol] node [left] {} (k2e)
;
\end{tikzpicture}
    \caption{Network representation for the CC-MDVSP instance described in Examples \ref{exa:mdvsp_instance} and \ref{exa:mdvsp_instance_cc}. }
    \label{fig:example-network}
\end{figure}
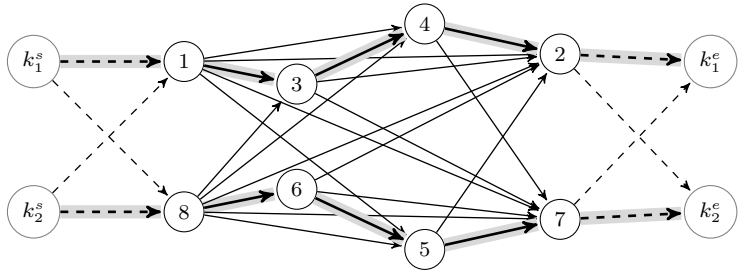

We note that the planning compatibility set $\compatible$ only defines the set of allowed consecutive trip pairs based on the travel time estimates, whereas the proposed chance-constrained model decides which trip sequences are indeed feasible based on the travel time realizations (i.e., the operational portion of the model). As previously mentioned, this set $\compatible$ can always be built in a conservative manner so that no feasible schedule is cut off. However, building $\compatible$ via a less conservative but ``safe'' approach (i.e., not removing any feasible schedule) would reduce the number of arcs in the network and, as such, help solve the optimization model more efficiently. Also, this definition of $\compatible$ allows us to easily incorporate any other compatibility considerations/preferences.

The MCF model creates timetabled trip sequences (i.e., one sequence per bus) and assigns them to depots so that all trips are covered exactly once. The main decision vector is $\bx \in \B^{\numarcs \times \numdepots}$, where $x_{ijk}\in \{0,1\}$ represents visiting node $i \in \nodes$ right before node $j \in \nodes$, with a bus associated to depot $k \in \depots$. Then, our MCF model for the CC-MDVSP  is given by:
\begin{align}
    \min \; & \sum_{k \in \depots}\sum_{(i,j)\in \arcs}\cost_{ijk} x_{ijk}  \tag{\textsf{CC-MCF}} \label{model:chance} \\
    \mbox{s.t.}\;
    & \bx \in \mcfset  \nonumber \\ 
    & \Prob\{\bx \in \ccset(\xi)\} \geq 1 - \ccprob. \label{cc:chanceConstraint}
\end{align}

The objective function of \eqref{model:chance} represents the travel costs, where, for any given $k \in \depots$, $\cost_{ijk} = \cost_{ij}$ for $(i,j)\in \compatible$,  $\cost_{kik} = \cost_{ki}$, and $\cost_{ikk} = \cost_{ik}$ for $i \in \trips$. Set $\mcfset$ corresponds to the planning constraints representing the candidate vehicle schedules. This set is described by the deterministic MCF constraints over the network $\graph=(\nodes,\arcs)$ as
\begin{subequations}
\begin{align}
    \mcfset = \Big\{ & \bx \in \B^{A\times K}:  \nonumber \\
    & \sum_{k \in \depots}
    \sum_{i : (i,j) \in \arcs}
    x_{ijk} = 1, & \forall j \in \trips, \ \  \label{cc:tripOnce} \\
    & \sum_{i \in \trips} x_{kik}  \leq \capacity_k, & \forall k \in \depots, \ \  \label{cc:capacity} \\
    & 
    \sum_{j : (j,i) \in \arcs}
    x_{jik} - 
    \sum_{j : (i,j) \in \arcs}
    x_{ijk} = 0, & \forall i \in \trips, \; k \in \depots, \ \  \label{cc:flowBalance} \\
    & x_{k'ik}= x_{ik'k} = 0  & \forall k,k'\in \depots,\; k \neq k',\; i \in \trips \label{cc:consistenDepot} \Big\}.
\end{align}
\end{subequations}

Constraints \eqref{cc:tripOnce} ensure that each trip is scheduled exactly once. Constraints \eqref{cc:capacity} represent the capacity of the depots, that is, the number of buses (i.e., timetabled trip sequences) that can be assigned to a given depot. Constraints \eqref{cc:flowBalance} correspond to flow balance equations and constraints \eqref{cc:consistenDepot} ensure that the start and end depot nodes coincide with the associated depot. 

Lastly, constraint \eqref{cc:chanceConstraint} corresponds to the operation portion of the problem, that is, the joint chance constraint that models the service reliability. We consider three additional decision variables to model the service reliability requirements set $\ccset(\xi)$. For each trip $i \in \trips$, variable $v_i\in \{0,1\}$ indicates if trip $i$ starts on time,  $y_i\geq 0$ represents the start time of the trip, and $u_i \in [0,\express_i]$ corresponds to the amount of time used by the bus to decrease the duration of the trip. Then, the set of operational constraints is
\begin{subequations}
\begin{align}
    \ccset(\xi) = \Big\{ \bx &\in \{0,1\}^{\numarcs\times \numdepots} : \;  \exists \; (\bv, \by, \bu) \mbox{ such that} \nonumber \\
    & \sum_{i \in \trips} v_i  \geq \numserviceTrip, \label{cc:service-trips}\\
    & \sum_{i \in \trips(r)} v_i  \geq \numserviceRoute_r, & \forall r \in \routes, \ \  \label{cc:service-routes}\\
    &  y_j  + \dur_j(\xi) + \travel_{ji}(\xi) - u_j -\bigMstart_{ji}\left( 1 - \sum_{k \in \depots} x_{jik} \right) \leq y_i, & \forall (j,i)\in \compatible, \ \  \label{cc:start-trip}\\
    & \travel_{ki}(\xi) -\bigMstart_{ki} \left( 1 - x_{kik} \right) \leq y_i, & \forall i \in \trips, k \in \depots, \ \  \label{cc:start-depot} \\
    & y_i  \leq  \stime_i + \ub \cdot v_i  + \bigMotp_i(1 - v_i ), & \forall i \in \trips, \ \  \label{cc:start-ub}\\
    & \stime_i - \lb \leq y _{i}, & \forall i \in \trips, \ \ \label{cc:start-lb}  \\
    & v _{i} \in \B, \; y_i \geq 0,\; u_i \in [0,\express_i ] & \forall i \in \trips\Big\}. \label{cc:domains}
\end{align}
\end{subequations}

Constraints \eqref{cc:service-trips} and \eqref{cc:service-routes} enforce the service requirements, that is, the minimum number of trips that start on time across all trips and for each route, respectively. Constraints \eqref{cc:start-trip} and \eqref{cc:start-depot} model the start time of each trip given the set of scheduled decisions. Constraints \eqref{cc:start-ub}  relate the start time of each trip with their respective on-time decision variables, and constraints \eqref{cc:start-lb} ensure that start times are not earlier than what is mandated. Note that parameters $\bigMstart_{ji},\bigMstart_{ki}, \bigMotp_i >0$ for $i,j\in \trips$ and $k\in \depots$ are sufficiently large values chosen to model the logical implications of constraints \eqref{cc:start-trip}-\eqref{cc:start-ub} properly, respectively. We refer the reader to Section \ref{sec:lp-cuts} for a discussion on tight big-M values for this formulation. 
Lastly, we assume that all trips scheduled first on a sequence (i.e., right after leaving the depot) always start on time, which makes 
\eqref{cc:start-depot} redundant, but we leave them in the model for completeness. The validity of this assumption arises in real transit agencies that can adjust their drivers' schedules to guarantee that they always start the first trip on time.

\section{Branch-and-Cut Decomposition Scheme} \label{sec:decomposition}

This section introduces the methodology employed to address the \eqref{model:chance} model. We use the commonly employed SAA approach that transforms the CCP model into a large-scale MILP model \citep{luedtke2008sample} to obtain near-optimal solutions to the CC-MDVSP. Given the size of the model and its well-known weak LP relaxation \citep{luedtke2010integer}, we propose a B\&C approach to handle these issues due to its success in other applications with continuous recourse variables \citep{luedtke2014branch}. 

Due to the existence of binary recourse variables, there is no readily suitable B\&C approach for our problem. Therefore, we devise problem-specific cut-generation procedures to solve the resulting SAA model. Before describing our cut families in Section \ref{sec:cutgeneation},  this section provides the SAA formulation and an overview of the decomposition framework. Moreover, we show that our (scenario) subproblems can be optimally solved via a polynomial-time greedy algorithm, which is key when designing some of our cut-generation variants. We also devise valid inequalities for each scenario realization to strengthen the master problem relaxation.

\subsection{SAA Formulation}
The SAA formulation for \eqref{model:chance} considers a set of $\numscenarios$ scenarios $\scenarios$ obtained by sampling $\xi$, $\{ \xi^\s\}_{s \in \scenarios}$, where each scenario $\s \in \scenarios$ has probability $1 /{\numscenarios}$. In what follows, we use 
superscript notation to represent the specific value of a stochastic parameter omitting the underlying $\xi^\s$ parametrization for brevity (e.g., $\dur_i^\s=\dur_i(\xi^s)$ represents the duration of trip $i \in \trips$ in scenario $\s\in \scenarios$).

For the SAA model, we introduce a new set of binary variables $\bz \in \B^{\numscenarios}$, where $z_\s = 0$ if the service requirements are \textit{met} for scenario $\s \in \scenarios$ (i.e., $\bx \in \ccset(\xi^\s)$) and $z_\s = 1$ if the requirements can be violated. Then, the scenario-based formulation 
is given by: 
\begin{subequations}
\begin{align}
    \min \; & \sum_{k \in \depots}\sum_{(i,j)\in \arcs}\cost_{ijk} x_{ijk}  \tag{\textsf{SAA-MCF}} \label{model:chance-saa} \\
    \mbox{s.t.}\;
    & \bx \in \mcfset \nonumber \\
    & \sum_{\s \in \scenarios} z_\s \leq  \lfloor \numscenarios \epsilon \rfloor , \label{ccs:probability}\\
    & (\bx, z_\s) \in \subproblem^\s, & \forall \s \in \scenarios, \label{ccs:scenario}\\
    & z_s \in \B & \forall \s \in \scenarios.  \nonumber
\end{align}
\end{subequations}

Model \eqref{model:chance-saa} maintains the objective function and the set of planning constraints of \eqref{model:chance}, but replaces the CC \eqref{cc:chanceConstraint} (i.e., the operational portion of the problem) with a set of linear constraints for each scenario. Specifically, constraint \eqref{ccs:probability} enforces the maximum number of scenarios that can violate the service requirements. Set $\subproblem^\s$ corresponds to the service requirements $\ccset(\xi)$ for each scenario $\s\in \scenarios$, which is given by
\begin{subequations}
\begin{align}
    \subproblem^\s = \Big\{ & (\bx,z) \in \B^{\numarcs\times \numdepots}\times \B: \;  \exists \; (\bv, \by, \bu) \mbox{ such that} \nonumber \\
    & \sum_{i \in \trips} v_i  \geq \numserviceTrip (1 -  z), \label{ccs:service-trips}\\
    & \sum_{i \in \trips(r)} v_i  \geq \numserviceRoute_r (1 -  z), & \forall r \in \routes, \ \  \label{ccs:service-routes}\\
    &  y_j  + \dur_j^\s + \travel_{ji}^\s - u_j  -\bigMstart_{ji}\left( 1 - \sum_{k \in \depots} x_{jik} \right) \leq y_i, & \forall (j,i)\in \compatible, \ \  \label{ccs:start-trip}\\
    & \travel_{ki}^\s -\bigMstart_{ki} \left( 1 - x_{kik} \right) \leq y_i, & \forall i \in \trips, k \in \depots, \ \  \label{ccs:start-depot} \\
    & y_i  \leq  \stime_i + \ub v_i  + \bigMotp_i(1 - v_i ), & \forall i \in \trips, \ \  \label{ccs:start-ub}\\
    & \stime_i - \lb \leq y _{i}, & \forall i \in \trips, \ \  \label{ccs:start-lb}  \\
    & v _{i} \in \B, \; y_i \geq 0,\; u_i \in [0,\express_i] & \forall i \in \trips\Big\}. \label{ccs:domains}
\end{align}
\end{subequations}
\noindent The difference between $\subproblem^\s$ and $\ccset(\xi)$ is the inclusion of the scenario variable $z\in\B$ and the realization of the random variables for scenario $\s\in \scenarios$. In particular, the OTP and fairness constraints \eqref{ccs:service-trips} and \eqref{ccs:service-routes} are active if the service requirements for a particular scenario can be satisfied (i.e., $z=0$). The remaining constraints \eqref{ccs:start-trip}-\eqref{ccs:domains} are analogous to \eqref{cc:start-trip}-\eqref{cc:domains} for a scenario realization.

It is well-known that an SAA model gives good approximations for its corresponding CCP model for a limited number of scenarios, and its optimal value converges to the CCP optimal value as the number of scenarios grows \citep{luedtke2008sample}. Thus, given a sufficiently large number of scenarios, \eqref{model:chance-saa} is guaranteed to yield a near-optimal solution for \eqref{model:chance}. 

\subsection{Decomposition Framework}

Model \eqref{model:chance-saa} is a large-scale problem due to scenario copies of variables $z$, $\bv$, $\by$, $\bu$, and their corresponding set of constraints $\subproblem^\s$ for each $\s\in \scenarios$. We propose a B\&C framework that divides the problem into a master problem for all planning constraints and one subproblem for each scenario representing the operational constraints and variables. 

Our master problem considers all the vehicle scheduling decisions $\bx\in \B^{\numarcs\times \numdepots}$ and the scenario variables $\bz\in \B^\numscenarios$ indicating whether or not the service requirements are met for each scenario. Thus, the master problem is given by
\begin{subequations}
\begin{align}
    \min \; & \sum_{k \in \depots}\sum_{(i,j)\in \arcs}\cost_{ijk} x_{ijk}  \tag{\textsf{SAA-Master}} \label{model:master} \\
    \mbox{s.t.}\;
    & \bx \in \mcfset, \nonumber \\
    & \sum_{\s \in \scenarios} z_\s \leq \lfloor \numscenarios \epsilon \rfloor, \nonumber \\
    & \sum_{k \in \depots}\sum_{(i,j)\in \arcs}\theta_{ijk} x_{ijk} \leq \theta_0 z_s + \theta_1 (1-z_\s) , & \forall (\btheta, \theta_0, \theta_1) \in \validset^\s, \; \s \in \scenarios, \label{master:valid-ine}\\
    & \sum_{(i,j)\in \arcs}\lambda_{ij}\sum_{k \in \depots} x_{ijk} \leq \lambda_0 - 1  + z_\s ,  & \forall (\blambda, \lambda_0) \in \cutset^\s, \; \s \in \scenarios,  \label{master:cuts-added}\\
    & z_\s \in \B ,& \forall \s \in \scenarios. \nonumber
\end{align}
\end{subequations}
Note that \eqref{model:master} includes all the MCF planning constraints $\mcfset$, which are common for all scenarios, and inequality \eqref{ccs:probability} that links all the scenario variables. The model also includes a set of valid inequalities \eqref{master:valid-ine} that relate the scheduling decisions with each scenario variable, which we formally present in Section \ref{sec:master-valid-inequalities}. Lastly, \eqref{master:cuts-added} corresponds to the set of constraints added during the B\&C algorithm, which are discussed in Section \ref{sec:cutgeneation}. Note that $\validset^\s$ and $\cutset^\s$ are the set of coefficients\footnote{These sets also determine the constants in the constraints but are referred to as coefficients for conciseness.} for constraints  \eqref{master:valid-ine} and \eqref{master:cuts-added} for each scenario $\s\in \scenarios$, respectively. 

The master problem considers all the variables and constraints linking different scenarios, which allows us to create one subproblem for each scenario. The primary goal of solving a subproblem is to check if a candidate vehicle schedule $\bxhat \in \mcfset$ meets the service requirements for a specific scenario. To do so, we consider subproblem \eqref{model:subproblem} for each scenario $\s\in \scenarios$ and a fixed  master solution $\bxhat$:
\begin{equation}
\zbar \in \argmin_{z\in \B} \left\{ z  :\; (\bxhat,z) \in \subproblem^\s
\right\}. \tag{\textsf{SAA-Sub($\s, \bxhat$)}} \label{model:subproblem} 
\end{equation}
The model includes all the variables and constraints in $\subproblem^\s$ and minimizes the scenario variable $z$ to indicate if $\bxhat$ meets the service requirements for $\s\in \scenarios$ when the optimal value is zero. 

\begin{figure}[tb]
    \centering
    \includegraphics[width=\textwidth]{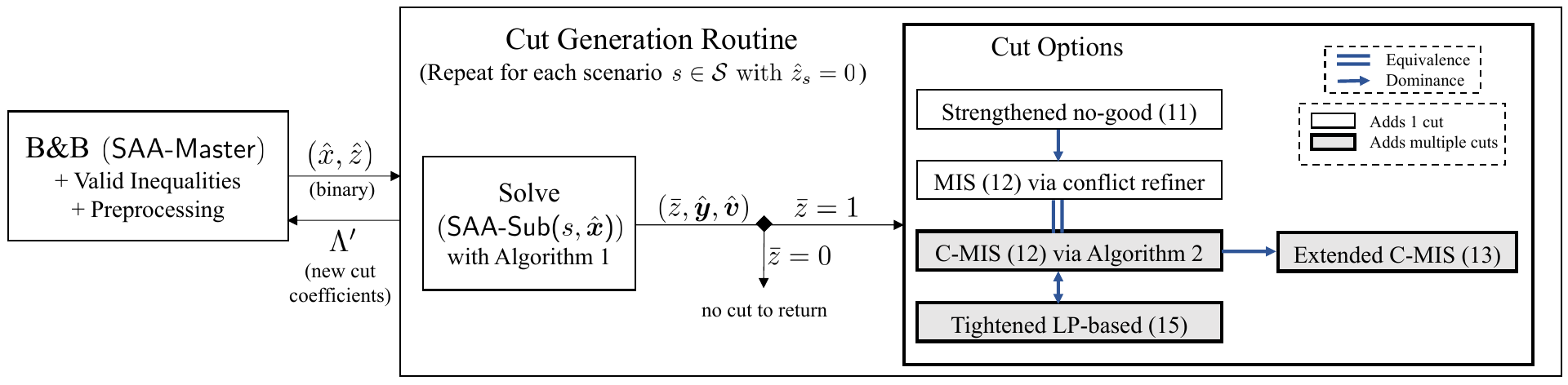}
    \caption{Diagram of the decomposition scheme and the relationship between the proposed cuts.}
    \label{fig:decomposition-diagram}
\end{figure}

Figure \ref{fig:decomposition-diagram} illustrates the main components of our B\&C procedure. We perform a branch-and-bound (B\&B) search over \eqref{model:master} until we find a binary feasible solution $(\bxhat, \bzhat)$. This solution is passed to our cut generation routine. This routine iterates over all the scenarios where the master problem indicates that the vehicle schedule meets the service requirements (i.e., $\zhat_s = 0$) and checks if that is the case by solving \eqref{model:subproblem} using the greedy algorithm (i.e., Algorithm \ref{alg:greedy} described in Section \ref{sec:greedy}). If the optimal value of the subproblem $\zbar$ is one (i.e., at least one of the requirements cannot be met), then we generate one or more cuts for that scenario and add the cut coefficients to the corresponding set ${\cutset^\s}'$. Our procedure iterates over all scenarios searching for as many cuts as possible, but alternatives can be considered (e.g., add at most one cut per iteration). Lastly, we return the new set of cut coefficients $\cutset'$ and add them to the master problem as $\cutset = \cutset \cup \cutset'$.

Figure \ref{fig:decomposition-diagram} also presents several components of our procedure described in the following sections. For instance, the master problem can consider preprocessing steps commonly used in the determinist MDVSP literature (e.g., variable fixing and odd-cycle cuts \citep{hadjar2006branch,groiez2013separating}) and our valid inequalities detailed in Section \ref{sec:master-valid-inequalities}. Lastly, it summarizes our proposed cut-generation alternatives and their relationship, which are presented in Section \ref{sec:cutgeneation}.

\subsection{Subproblem Greedy Algorithm} \label{sec:greedy}

Solving subproblem \eqref{model:subproblem} for each candidate vehicle schedule $\bxhat$ and scenario $\s\in \scenarios$ is one of the main components of our cut generation routine. Any MILP solver can directly solve this subproblem, but it could be computationally expensive as the number of timetabled trips grows. In what follows, we present a polynomial-time greedy algorithm that returns an optimal solution to \eqref{model:subproblem}. This greedy procedure is used to solve \eqref{model:subproblem} as illustrated in Figure \ref{fig:decomposition-diagram}. Moreover, this procedure is a crucial component to some of our proposed cut-generation routines, as explained in Section \ref{sec:cutgeneation}.

To explain the greedy algorithm and other components of our methodology, we represent the schedule $\bxhat$ as a set of buses $\bus(\bxhat) = \{ \bus_1,..., \bus_\numbuses\}$ (i.e., trips sequences), where $\numbuses$ corresponds to the total number of buses in the schedule. Specifically, $\bus(\bxhat)$ is a partition of $\trips$ such that each $\bus_b$ with $b \in \{1,...,\numbuses\}$ contains all the trips in the same trip sequence given by $\bxhat$. For each bus $b$, $\bus_b$ is an ordered set with respect to the trip's schedule $\bxhat$ (e.g., the first trip in $\bus_b$ is the first trip in the schedule of bus $b$). For example, the schedule in the left grid of Figure \ref{fig:example-grid} contains two buses given by $\bus_1=(1,3,4,2)$ and $\bus_2=(8,6,5,7)$.

\begin{algorithm}[htb]
\small
    \linespread{1.2}\selectfont
	\caption{Greedy Algorithm for \eqref{model:subproblem} } \label{alg:greedy}
	\begin{algorithmic}[1]
		\Procedure{\algGreedy }{$\s$, $\bus(\bxhat)$}
		\State Initialize $u^*_i = \express_i$ for all $i \in \trips$
		\For {$b \in \{1,...,\numbuses\}$}
            \For {$p \in \{1,..., |\bus_b| \}$}
            \State Let $i$ be the index of the $p$-th trip in $\bus_b$
            \If { $p = 1$ (i.e., $i$ is the first trip in $\bus_b$)}  $y^*_i =\stime_i - \lb$ and $v_i^* = 1$
		\Else {} 
		\begin{equation}
		    y^*_i = 
          \max\left\{ \stime_i - \lb, \;  y^*_j + \dur^\s_j + \travel^\s_{ji} - u_j^*  \right\}
		\label{eq:earliest-start-time}
		\end{equation}
            \State Set $v^*_i = 1$ if $y_i^* \in [\stime_i -\lb, \stime_i +\ub]$ and $v^*_i = 0$ otherwise
        \EndIf
            \State Set $j$ as the last trip iterated (i.e., $j = i$)
		\EndFor
            \EndFor
		\If { $\sum_{i \in \trips(r)} v_i^* \geq \numserviceRoute_r$ for all $r \in \routes$ and $\sum_{i \in \trips} v_i^* \geq \numserviceTrip$} $z^* = 0$
		\Else {} $z^* = 1$
		\EndIf
		\State \algReturn\ $(z^*,\by^*,\bv^*, \bu^*)$
		\EndProcedure
	\end{algorithmic} 
\end{algorithm}

Algorithm \ref{alg:greedy} shows our greedy procedure that finds the earliest start time of each timetabled trip for a given vehicle schedule $\bxhat$. It first fixes the expressing variables to their maximum value to consider the minimum total duration of each trip (line 2). We then iterate over the trips of each bus following their schedule order (lines 3-5) to guarantee that all predecessor trips' start times are computed before a given trip. If a trip is the first in the bus, it starts as early as possible and is always on time (line 6). Otherwise, \eqref{eq:earliest-start-time} computes the earliest start time considering the previous trip in $\bus_b$ and the scheduled start time. These times are then used to determine if a trip arrives on time (line 8). Lastly, we check if the OTP and fairness constraints are met, set the value of $z^*$ accordingly, and return the solution (lines 9-10). This algorithm has a computational complexity of $O(\numtrips)$  because we iterate over each trip only once to compute the earliest start times $\by^*$ and the on-time variables $\bv^*$.  As stated in Proposition \ref{prop:greedy}, whose proof is given in Appendix \ref{appendix:prop:greedy-proof}, this algorithm returns an optimal solution for \eqref{model:subproblem} for a given $\bxhat$ and $\s\in \scenarios$.

\begin{proposition} \label{prop:greedy}
For a master solution $\bxhat$ and scenario $\s\in \scenarios$,  Algorithm \ref{alg:greedy} returns an optimal solution for \eqref{model:subproblem}, where $\by^*$ is the earliest start times of each timetabled trip.
\end{proposition}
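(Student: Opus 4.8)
The plan is to establish two facts: that the greedy output $(\by^*,\bv^*,\bu^*)$ is a feasible witness for $(\bxhat, z^*)\in \subproblem^\s$, and that $z^*$ is the smallest attainable value of $z$, i.e.\ $z=0$ is feasible precisely when the algorithm returns $z^*=0$. First I would fix $\bxhat$ and reduce the constraint system. Since $\bxhat\in\mcfset$, every trip $i$ that is not first in its bus has a unique predecessor $j$, namely the one with $\sum_{k\in\depots} x_{jik}=1$; for every inactive arc the big-M term makes \eqref{ccs:start-trip} slack, and by the first-trip assumption the depot constraints \eqref{ccs:start-depot} are redundant. Thus the only effective coupling on $(\by,\bu)$ is the chain of precedence inequalities $y_i \geq y_j + \dur_j^\s + \travel_{ji}^\s - u_j$ running along each bus, together with $y_i \geq \stime_i - \lb$ and $u_i \in [0,\express_i]$.

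The core step is the \emph{earliest-start-time property}: for every $(\by,\bu)$ feasible in this reduced system, $y_i \geq y_i^*$ for all $i$, where $y_i^*$ is computed by \eqref{eq:earliest-start-time}. I would prove this by induction on the position $p$ of trip $i$ within its bus. The base case is the first trip, where $y_i \geq \stime_i - \lb = y_i^*$. For the inductive step, observe that $u_j$ enters only the single precedence constraint of $j$'s unique successor, with a negative coefficient, so its domain bound gives $u_j \leq \express_j = u_j^*$; combining this with $y_j \geq y_j^*$ from the hypothesis and with $y_i \geq \stime_i - \lb$ yields $y_i \geq \max\{\stime_i - \lb,\; y_j^* + \dur_j^\s + \travel_{ji}^\s - u_j^*\} = y_i^*$. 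Running the same recursion forward shows that $\by^*$ itself satisfies all the constraints, so $(\by^*,\bv^*,\bu^*)$ is feasible for the value $z^*$ the algorithm selects.

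Next I would convert the start-time bound into a bound on the on-time indicators. Constraint \eqref{ccs:start-ub} forces $v_i=1 \Rightarrow y_i \leq \stime_i + \ub$, and since $y_i \geq y_i^*$ this gives $v_i = 1 \Rightarrow y_i^* \leq \stime_i + \ub \Rightarrow v_i^* = 1$; hence $v_i \leq v_i^*$ componentwise for every feasible solution. Therefore $\bv^*$ simultaneously maximizes $\sum_{i\in\trips} v_i$ and each $\sum_{i\in\trips(r)} v_i$. The proof then finishes with a case split: if the algorithm returns $z^*=0$, then $\bv^*$ meets every requirement \eqref{ccs:service-trips}--\eqref{ccs:service-routes}, so $(\bxhat,0)$ is feasible and hence optimal; if it returns $z^*=1$, then some service sum falls short even at its componentwise maximum $\bv^*$, so no feasible $\bv$ can fulfil that requirement with $z=0$, forcing $z=1$, which is feasible because those requirements are relaxed when $z=1$.

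I expect the main obstacle to be the earliest-start-time property, specifically making rigorous that all start times can be minimized \emph{simultaneously}. This is exactly where fixing $\bxhat$ is essential: the schedule assigns each trip a single predecessor, so the precedence constraints decompose into disjoint chains rather than a general precedence graph, and lowering one trip's start time never forces another's upward. This is what allows the forward recursion of \eqref{eq:earliest-start-time} to attain the componentwise minimum. Some care is also needed to verify that the big-M values are large enough that inactive arcs and the depot constraints impose no binding restriction, consistent with the model's assumptions.
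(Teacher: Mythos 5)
Your proposal is correct and follows essentially the same route as the paper's proof: fix $\bu^*=\express$, compute earliest start times along each bus chain, and argue that minimizing all start times simultaneously maximizes the on-time indicators and hence minimizes $z$. Your version is in fact more rigorous than the paper's — the explicit induction establishing $y_i\geq y_i^*$ for every feasible solution and the resulting componentwise bound $v_i\leq v_i^*$ make precise what the paper only asserts informally ("any $\by^*$ not set to its earliest start time could potentially increase the number of $\bv^*$ set to $0$").
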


\subsection{Master Problem Valid Inequalities} \label{sec:master-valid-inequalities}

As previously mentioned, we consider a set of valid inequalities \eqref{master:valid-ine} to strengthen the master problem \eqref{model:master} by relating the scheduling variables $\bx$ with the scenario variables $z_\s$ for each $\s \in \scenarios$. We now describe these inequalities and show their validity. 

The general idea is to use the scenario realizations to determine the set of trip pairs that will lead to delays. We first compute the operational compatibility set for each scenario $\s \in \scenarios$ as
$ \compatible^\s = \{ (i,j) \in \trips\times \trips: \; \stime_i - \lb + \dur_i^\s + \travel_{ij}^\s -\express_i \leq \stime_j + \ub  \}. $
This definition states that if we schedule trips $(i,j)\notin \compatible^\s$ on the same bus, trip $j$ will be delayed in scenario $\s \in \scenarios$. We use this observation to derive constraints that relate trip pairs that lead to known delays for any given scenario:
\begin{equation} \label{eq:valid-trip}
    \sum_{j \in \trips} \sum_{i: (i,j)\in \compatible\setminus\compatible^\s} \sum_{k \in \depots}x_{ijk} \leq z_\s \numtrips_\s + (1-z_\s)\numserviceTrip, \qquad \forall \s\in \scenarios.
\end{equation}
where $\numtrips_\s = |\{ j \in \trips:\; \exists i\in \trips \text{ such that }  (i,j)\in \compatible\setminus\compatible^\s \}|$ is the maximum number of trips that can be delayed in $\s$ because one of its possible predecessors.  Note that set $\compatible\setminus\compatible^\s$ corresponds to all trip pairs that are feasible in the planning phase but lead to delays in scenario  $s\in \scenarios$. Thus, for each scenario $\s\in \scenarios$, inequality \eqref{eq:valid-trip} considers all trips $j$ that have a predecessor in $\compatible$ that will lead to a delay in scenario $\s$.  Since the depot does not affect the compatibility set and each trip can only be associated with one depot, we can consider all possible depot assignments. Thus, the left-hand side of \eqref{eq:valid-trip} counts the number of delayed trips, and the right-hand side enforces $z_\s=1$ if the number of delayed trips surpasses the OTP requirements.

Similarly, we create inequalities that represent the fairness requirements for each route:
\begin{equation} \label{eq:valid-route}
    \sum_{j \in \trips(r)} \sum_{i: (i,j)\in \compatible\setminus\compatible^\s} \sum_{k \in \depots}x_{ijk} \leq z_\s I^r_\s + (1-z_\s) \numserviceRoute_r, \qquad \forall \s\in \scenarios, r \in \routes,
\end{equation}
where $\numtrips^r_\s = |\{ j \in \trips(r):\; \exists i\in \trips \text{ such that }  (i,j)\in \compatible\setminus\compatible^\s \}|$ has an analogous mean to $\numtrips_\s$. 

The main difference between \eqref{eq:valid-trip} and \eqref{eq:valid-route} is that the latter considers delays of trips associated with a specific route. Proposition \ref{prop:valid-ine} states the validity of the inequalities and their coefficient values considering the general form of \eqref{master:valid-ine}. The proof follows from the construction of the compatibility sets, the OTP definitions for each scenario, and that each trip must be visited exactly once (i.e., constraint \eqref{cc:tripOnce}).

\begin{proposition} \label{prop:valid-ine}
Inequalities \eqref{eq:valid-trip} are valid for \eqref{model:master} with coefficients:
\[
\theta_{ijk} = \begin{cases}
1, & (i,j)\in \compatible\setminus\compatible^\s,\; k \in \depots, \\
0, & \text{otherwise},
\end{cases}
\qquad \theta_0 = \numtrips, \qquad  \theta_1 = \numserviceTrip.
\]
The analogous result is true for \eqref{eq:valid-route}.
\end{proposition}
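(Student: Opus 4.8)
Because the inequality couples $\bx$ with a single scenario variable $z_\s$ only through the set $\subproblem^\s$, it suffices to check validity scenario by scenario. The plan is to fix a scenario $\s \in \scenarios$ and an arbitrary pair $(\bx, z_\s)$ with $\bx \in \mcfset$ and $(\bx, z_\s) \in \subproblem^\s$, and to verify the claimed inequality for that pair; every feasible point of \eqref{model:chance-saa} is of this form, so this establishes validity. Reading the verified inequality against the template \eqref{master:valid-ine} then lets me read off the coefficients: $\theta_{ijk}$ is the indicator of $(i,j)\in\compatible\setminus\compatible^\s$ over all $k\in\depots$, the $z_\s$-coefficient $\theta_0$ can be taken as $\numtrips_\s$ (equivalently the looser $\numtrips$), and the $(1-z_\s)$-coefficient $\theta_1$ is the number of trips permitted to be late under the OTP target.

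\textbf{Combinatorial meaning of the left-hand side.} First I would give the left-hand side a counting interpretation. Fix a successor $j \in \trips$. By \eqref{cc:tripOnce} exactly one arc entering $j$ is selected across all depots, so $\sum_{i:(i,j)\in\compatible\setminus\compatible^\s}\sum_{k\in\depots} x_{ijk} \in \{0,1\}$, and it equals $1$ precisely when the unique trip scheduled immediately before $j$ is some $i$ with $(i,j)\in\compatible\setminus\compatible^\s$. Summing over $j\in\trips$, the left-hand side equals the number $D_\s$ of trips whose immediate predecessor is a trip that renders them unavoidably late in scenario $\s$. By the definition of $\numtrips_\s$, these successors form a subset of a set of size $\numtrips_\s$, so $D_\s \le \numtrips_\s$; this already settles the case $z_\s = 1$, where the right-hand side is $\numtrips_\s$.

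\textbf{The crux: the case $z_\s = 0$.} Here I must link $D_\s$ to the on-time variables. Take any witness $(\bv, \by, \bu)$ certifying $(\bx, 0) \in \subproblem^\s$, and let $j$ be one of the $D_\s$ trips, with selected trip-predecessor $i$ (so $\sum_{k} x_{ijk}=1$, making the big-$M$ term in \eqref{ccs:start-trip} vanish). Chaining $y_i \ge \stime_i - \lb$ from \eqref{ccs:start-lb}, the instance $y_j \ge y_i + \dur_i^\s + \travel_{ij}^\s - u_i$ of \eqref{ccs:start-trip}, and $u_i \le \express_i$ from \eqref{ccs:domains} gives $y_j \ge \stime_i - \lb + \dur_i^\s + \travel_{ij}^\s - \express_i$. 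Since $(i,j)\notin\compatible^\s$, the defining inequality of $\compatible^\s$ is violated, i.e.\ $\stime_i - \lb + \dur_i^\s + \travel_{ij}^\s - \express_i > \stime_j + \ub$, so $y_j > \stime_j + \ub$, and \eqref{ccs:start-ub} forces $v_j = 0$. Thus all $D_\s$ such trips are off time, whence $\sum_{i\in\trips} v_i \le \numtrips - D_\s$; combined with the service requirement \eqref{ccs:service-trips}, $\numserviceTrip \le \sum_{i\in\trips} v_i \le \numtrips - D_\s$, i.e.\ the left-hand side satisfies $D_\s \le \numtrips - \numserviceTrip$, the number of late trips permitted under the OTP target, which is exactly the $(1-z_\s)$ term of the right-hand side.

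\textbf{Main obstacle and the fairness case.} The only delicate point is the index bookkeeping in the implication chain: one must keep the predecessor/successor roles consistent between the left-hand side (arc $x_{ijk}$, with $i$ before $j$) and the start-time constraint \eqref{ccs:start-trip}, and verify that the $\lb$, $\ub$, and $\express_i$ offsets built into $\compatible^\s$ are precisely what converts ``$(i,j)\notin\compatible^\s$'' into the strict violation $y_j > \stime_j + \ub$ needed to force $v_j=0$. Everything else is routine. For \eqref{eq:valid-route} the identical argument applies verbatim after restricting every sum and the count $D_\s$ to $\trips(r)$ and replacing the aggregate requirement \eqref{ccs:service-trips} by the per-route requirement \eqref{ccs:service-routes}; this yields $\theta_{ijk}=\mathbf{1}[(i,j)\in\compatible\setminus\compatible^\s,\ j\in\trips(r)]$, $\theta_0=\numtrips^r_\s$, and $\theta_1=\numtrips^r-\numserviceRoute_r$, completing the proof.
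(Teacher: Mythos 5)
Your argument is the natural fleshing-out of the paper's one-sentence justification (the paper only says the result ``follows from the construction of the compatibility sets, the OTP definitions for each scenario, and constraint \eqref{cc:tripOnce}''), and the chain you build is correct: \eqref{cc:tripOnce} makes the left-hand side a count $D_\s$ of trips whose selected predecessor lies in $\compatible\setminus\compatible^\s$; when $z_\s=0$, combining \eqref{ccs:start-lb}, \eqref{ccs:start-trip} with the vanishing big-$M$ term, $u_i\le\express_i$, and the negation of the defining inequality of $\compatible^\s$ forces $v_j=0$ for each such trip, and \eqref{ccs:service-trips} then yields $D_\s \le \numtrips-\numserviceTrip$. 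The index bookkeeping you worry about is handled correctly.

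The one genuine problem is your final identification: you assert that $\numtrips-\numserviceTrip$ ``is exactly the $(1-z_\s)$ term of the right-hand side,'' but the stated inequality \eqref{eq:valid-trip} and the proposition put $\theta_1=\numserviceTrip$ there, not $\numtrips-\numserviceTrip$. These are different numbers (with $\serviceTrips=0.9$ one is roughly $0.9\numtrips$ and the other $0.1\numtrips$). What you have actually proved is a \emph{tighter} inequality than the one stated; the stated one with $\theta_1=\numserviceTrip$ follows from yours only via the extra observation that $\numtrips-\numserviceTrip\le\numserviceTrip$ whenever $\serviceTrips\ge 1/2$ (which holds in the paper's setting), and for $\serviceTrips<1/2$ the stated coefficient would not be implied by your bound at all. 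You should either state explicitly that you are proving the strengthened coefficient $\theta_1=\numtrips-\numserviceTrip$ (and likewise $\numtrips^r-\numserviceRoute_r$ for \eqref{eq:valid-route}), flagging the discrepancy with the displayed inequality, or add the one-line comparison $\numtrips-\numserviceTrip\le\numserviceTrip$ under the standing assumption on $\serviceTrips$ to recover the literal statement. As written, the proof's conclusion does not match the claim it purports to establish.
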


Lastly, we note that valid inequalities \eqref{eq:valid-trip} and \eqref{eq:valid-route} do not dominate each other because each family of inequalities considers a different set of trips (i.e., either $\trips$ or $\trips(r)$ for some $r\in \routes$). In fact, \eqref{eq:valid-trip} can consider trips for different routes, while \eqref{eq:valid-route} only considers possible delays of trips of a particular route. Therefore, both families of inequalities can be useful to strengthen \eqref{model:master}. 
\section{Cut Generation Procedures} \label{sec:cutgeneation}

We now describe several cut generation procedures for our B\&C decomposition framework. As previously mentioned, subproblem \eqref{model:subproblem} is non-convex, so we cannot employ out-of-the-box Benders cuts as proposed by \cite{luedtke2014branch}. Therefore, we develop our own cut-generation algorithms that leverage the underlying structure of the problem. 

A naive cut generation alternative is to use no-good cuts, which are common in the logic-based Benders Decomposition (LBBD) literature \citep{hooker2003logic}. We can adapt these cuts for our B\&C decomposition to relate a vehicle schedule with the scenario variables representing the fulfillment of the service requirements. In particular, the no-good cut for scenario $\s\in \scenarios$ in which a master solution $\bxhat$  does not fulfill the service requirements is
\begin{equation}\label{eq:no-good}
    \sum_{(i,j,k) \in \scheduleDepotOne} x_{ijk} + \sum_{(i,j,k) \in \scheduleDepotZero} (1 - x_{ijk}) \leq \numscheduleDepot - 1 +  z_\s,
\end{equation}
where sets $\scheduleDepotZero = \{(i,j,k) \in \arcs\times \depots:\; \xhat_{ijk}= 0 \}$ and $\scheduleDepotOne = \{(i,j,k) \in \arcs\times \depots:\; \xhat_{ijk}= 1 \}$ represent the variable indices fixed to zero and one, respectively, for the vehicle schedule $\bxhat$, and  $\numscheduleDepot = |\scheduleDepotOne \cup \scheduleDepotZero|$ is the size of the vehicle schedule assignment.

We can improve the standard no-good cut \eqref{eq:no-good} by considering the structure of \eqref{model:chance-saa}. First, it is sufficient to include the variables associated with the actual sequence assignments (i.e., set $\scheduleDepotOne$) because all trips have to be sequenced exactly once and the remaining variables are known to be zero, as stated in \eqref{cc:tripOnce}. Second, the depot associated with a trip sequence is irrelevant in practice to calculate the OTP and fairness requirements because  the first trip in a bus always starts as early as possible. Thus, we can consider any possible depot assignment for the selected sequences. Then, a stronger no-good cut for $\bxhat$ and  scenario $\s\in \scenarios$ is
\begin{equation}\label{eq:no-good-strong}
     \sum_{(i,j) \in \schedule}\sum_{k \in \depots} x_{ijk} \leq \numschedule - 1 +  z_\s,
\end{equation}
where $\schedule= \{(i,j) \in \compatible :\; \exists k \in \depots \mbox{ such that } \xhat_{ijk}= 1 \}$ corresponds to the set of trip pairs sequenced in solution $\bxhat$  ignoring the depot assignments, and $\numschedule = |\schedule|$ is the number of trips pairs. It is clear that \eqref{eq:no-good-strong} is stronger than  \eqref{eq:no-good} because the latter considers all possible depot assignments to the trip pairs and also uses a smaller right-hand side constant. Nonetheless, cut \eqref{eq:no-good-strong} is still quite weak for our decomposition framework because it considers a  single sequencing assignment (i.e., $\xhat_{ij\cdot}$ solution) from a set of exponentially many options. 

In what follows, we present three different procedures to generate stronger cuts that consider a subset of trip pairs (i.e., partial trip sequences) that lead to vehicle schedules with unmet service requirements. Namely, Sections \ref{sec:mis-conlfic} and \ref{sec:mis-greedy} present infeasible set (IS) cuts with two different approaches to find the associated set: (i) a conflict refinement procedure from commercial solvers to find a minimal IS (MIS) and (ii) an iterative procedure based on the greedy algorithm solution to find what we call a {\it constraint-based MIS} (C-MIS). Section \ref{sec:mis-strong} shows how to strengthen the C-MIS cuts by exploiting the procedure to find C-MISs. Section \ref{sec:lp-cuts} shows how to find cuts using the LP relaxation of the subproblem \eqref{model:subproblem} and the solution found by our greedy algorithm. We also show that there is a strong relationship between the C-MIS cuts and the LP-based cuts.  Lastly, Section \ref{sec:b&c-zcont} shows how we can improve the B\&C procedure given the specific structure of our cuts. Figure \ref{fig:decomposition-diagram} summarizes all the proposed cut-generation alternatives, the relationship between each other, and the number of different cuts that each one can provide.

\subsection{MIS Cuts via Conflict Refinement} 
\label{sec:mis-conlfic}

A common practice in the literature is to strengthen no-good cuts by considering the minimal set of variable assignments that lead to an infeasible solution \citep{rahmaniani2017benders}. Specifically, for a given master solution $\bxhat$ and scenario $\s\in \scenarios$, an IS  is any subset of trip pairs $\scheduleIS \subseteq \schedule$ such that any schedule with this subset of trip pairs fails to fulfill the service requirements (i.e., violate \eqref{cc:chanceConstraint}). An MIS is an IS $\scheduleMIS \subseteq \schedule$ such that no subset of $\scheduleMIS$ is also an IS. Then, an MIS cut is given by
\begin{equation}\label{eq:mis-cut}
    \sum_{(i,j) \in \scheduleMIS}\sum_{k \in \depots} x_{ijk} \leq |\scheduleMIS| - 1 +  z_\s.
\end{equation}

One way to find an MIS is to use the conflict refinement tools of commercial solvers, such as CPLEX or Gurobi. Specifically, for a given $\bxhat$ that does not meet the service requirements in scenario  $\s\in \scenarios$, we modify subproblem \eqref{model:subproblem} by fixing variable $z = 0$ and, thus, making the problem infeasible. We then run the conflict refinement procedure to find the minimum set of constraints associated with $\bxhat$ that leads to a conflict (i.e., constraints \eqref{ccs:start-trip}). Finally, $\scheduleMIS$ is given by the trip pairs associated with the conflicting constraints.

The MIS cuts \eqref{eq:mis-cut} are guaranteed to remove the same or more solutions than \eqref{eq:no-good-strong} because $\scheduleMIS\subseteq \schedule$. One issue of this approach is its computational time because it needs to solve a MILP and run the conflict refinement procedure for each candidate's master problem solution and scenario. Moreover, there might be multiple MISs for a given $\bxhat$ and $\s\in \scenarios$, but the conflict refinement procedure only returns one. 

\subsection{C-MIS Cuts Using Greedy Solution} \label{sec:mis-greedy}

Given the drawbacks of using MILP-based conflict refinement to find an MIS and the lack of alternative efficient procedures, we develop a polynomial-time procedure to find C-MISs using the solution of Algorithm \ref{alg:greedy}. We define a C-MIS as an MIS obtained when we relax \eqref{model:subproblem} by considering a  single violated service requirement constraint (i.e., one inequality of \eqref{ccs:service-trips} and \eqref{ccs:service-routes}). To build a C-MIS, we develop a two-step algorithm that: 
\begin{enumerate*}[label=(\roman*)]
    \item identifies a set of delayed trips that constitute the base to build a C-MIS, and
    \item finds a subsequence of trips in the master candidate schedule that leads to those delays. 
\end{enumerate*}

In what follows, consider a master solution $\bxhat$ that violates the service requirements for a scenario $\s\in \scenarios$ and its corresponding solution 
$(z^*, \by^*, \bv^*, \bu^*)$ from Algorithm \ref{alg:greedy}. We use the notation $\delaytrips = \{ i \in \trips: v^*_i =0 \}$ to represent the set of trips that are delayed in scenario $s$, and the bus notation $\bus(\bxhat) = \{ \bus_1,..., \bus_\numbuses\}$ to represent a schedule $\bxhat$ (i.e., the trip sequences of each bus).

\paragraph{\underline{Step (i).}}
To create a C-MIS, we first identify one service requirement that is violated (i.e., one inequality in  \eqref{ccs:service-trips} and \eqref{ccs:service-routes}). We then create a minimal subset of trips  $\delayMIS\subseteq\delaytrips$ that violates such requirement and also satisfies a special \textit{predecessor condition} detailed in what follows. 
If we choose \eqref{ccs:service-trips}, $\delayMIS$ is the minimal number of delayed trips that violate the constraint when $z=0$, that is, $\delayMIS \subseteq \delaytrips$ such that $|\delayMIS| = \numtrips - \numserviceTrip +1$. Analogously, if the chosen condition is \eqref{ccs:service-routes} for some route $r\in \routes$, we consider $\delaytrips^r = \delaytrips\cap \trips(r)$ and $\delayMIS \subseteq \delaytrips^r$ such that $|\delayMIS| = \numtrips_r -\numserviceRoute_r +1$. 
Among these $\delayMIS$ options, we consider those that satisfy the following predecessor condition: for each trip $i \in \delayMIS$, any delayed predecessor should also be in $\delayMIS$, 
that is, if $i \in \delayMIS$ with $i \in \bus_b$ then $j \in \delaytrips\cap\bus_b$   with $y^*_j\leq y^*_i$ is also in $\delayMIS$.  The analogous condition is imposed when choosing \eqref{ccs:service-routes} for some $r\in \routes$ by replacing $\delaytrips$ with $ \delaytrips^r$. This predecessor condition is crucial to guarantee that the resulting IS is minimal, as shown in Proposition \ref{prop:greedy_mis}.

\paragraph{\underline{Step (ii).}}
Given $\delayMIS$, we create a C-MIS  $\scheduleCMIS$ by selecting a trip subsequence from the schedule $\bxhat$ for each trip in $\delayMIS$ that \textit{explain its delay}. We say that a trip subsequence of $p>1$ trips $(i_1,...i_{p-1},j)$ explains the delay of trip $j\in \delayMIS$ if $j$ would be delayed even when trip $i_1$ start as early as possible (i.e., $\stime_{i_1} - \lb$). Also, the subsequence $(i_1,...i_{p-1},j)$ is minimal if it explains the delay of $j$, but the subsequence without trip $i_1$ (i.e., $(i_2,...i_{p-1},j)$) does not.

Algorithm \ref{alg:cmis-greedy} details the procedure to build a C-MIS for a given violated constraint $\con \in \{\eqref{ccs:service-trips}, \{\eqref{ccs:service-routes}\}_{r\in \routes} \}$ and find minimal subsequences that explain the delays of trips in the corresponding $\delayMIS$. The procedure iterates over the scheduled buses and selects the last delayed trip of a bus that belongs to $\delayMIS$, trip $i\in \delayMIS$ (line 4). For trip $i$ to be delayed, its start time should be strictly greater than $\stime_i + \ub$, so we consider the earliest delayed start time to be $s_i + \ub + \tol$ (line 5), where $\tol>0$ is a predefined tolerance (e.g., $\tol= 0.1$ or $\tol=0.01$). Since a predecessor trip must cause this delay, we select the previous trip on the bus, trip $j$, add the trip pair $(j,i)$ to the C-MIS, and then check if the predecessor is the sole source of the delay (lines 7-8). If $i$ is still delayed even if $j$ starts as early as possible (i.e., $s_j -\lb$), then $j$ is sufficient to explain the delay of $i$, and there is no need to check other predecessor trips (lines 9-10). Otherwise, $j$ has to start later than $\stime_j-\lb$ to cause the delay for $i$, and we calculate the earliest possible start time of $j$ for $i$ to be delayed (lines 12). Lastly, if $j$ belongs to $\delayMIS$, we also need to explain the daily of $j$, thus, we also consider the earliest start time of $j$ (i.e., $\stime_j +\ub + \tol$) (line 14). The algorithm iterates until we explain the delay for all trips in $\delayMIS$.

\begin{algorithm}[htb] 
\small
    \linespread{1.2}\selectfont
	\caption{C-MIS construction based on the greedy algorithm solution}\label{alg:cmis-greedy}
	\begin{algorithmic}[1]
		\Procedure{\algMIS}{$s,\bus(\bxhat)$, $\by^*$, $\bv^*$, $\con \in \{\eqref{ccs:service-trips}, \{\eqref{ccs:service-routes}\}_{r\in \routes} \}$ }
            \State Build a $\delayMIS$ based on \con\ as explained in \textit{Step (i)} of Section \ref{sec:mis-greedy}
		\For {$b \in \{1,...,\numbuses\}$ such that $\bus_b\cap \delayMIS\neq \emptyset$}
		\State Select trip $i \in\bus_b\cap \delayMIS$ with the latest start time and set $\delayMIS= \delayMIS\setminus \{i\}$
            \State $\algDelayToExplain = \stime_i + \ub +\tol$
            \While {$\algDelayToExplain > 0$}
            \State Select the trip $j\in \bus_b$ that is scheduled right before trip $i$.
            \State Add trip pair to C-MIS: $\scheduleCMIS=\scheduleCMIS \cup \{(j,i)\}$
            \If {$\stime_j - \lb + \dur_j^s + \travel_{ji}^s\geq \algDelayToExplain$} {\hfill \color{gray} \# Trip $j$ explains the delay of $i$}
            \State $\algDelayToExplain = 0$
            \Else {\hfill \color{gray} \# Explain the delay of $j$}
            \State $\algDelayToExplain = \algDelayToExplain - (\dur_j^s + \travel_{ji}^s - \express_j)$.
            \If {$j \in \delayMIS$} 
            \State $ \algDelayToExplain = \max\{\algDelayToExplain, \; \stime_j + \ub + \tol \}$
            \EndIf
            \State Set  $i = j$.
            \EndIf
            \EndWhile
            \State Go back to line 4 if $\bus_b\cap \delayMIS\neq \emptyset$ 
		\EndFor
		\EndProcedure
	\end{algorithmic} 
\end{algorithm}

Example \ref{exa:cmis_greedy} illustrates some of the specific considerations of Algorithm \ref{alg:cmis-greedy}. Also, Proposition \ref{prop:greedy_mis} (proved in Appendix \ref{appendix:prop:greedy_mis-proof}) shows that the resulting set $\scheduleCMIS$ from Algorithm \ref{alg:cmis-greedy} is indeed a C-MIS for a given $\bxhat$.

\begin{example} \label{exa:cmis_greedy}
Consider the trip sequence depicted in Figure \ref{fig:cmis-greedy} . The graph illustrates the first 6 trips on the sequence, where shaded nodes correspond to delayed trips (i.e., $4, 6 \in \delayMIS$). The number above the arrows correspond to the duration of the predecessor trips and the travel time between trips for an scenario $s\in \scenarios$ (e.g., $\dur_2^s + \travel_{23}^s = 15$ for arc between trips 2 and 3). As illustrated in the graph, the number and the interval above each node trip $i \in \trips$ represents the earliest start time of such trip $y_i^*$ and its corresponding time window for it to be on-time (i.e., $[\stime_i - \lb,\stime_i + \ub]$). The numbers below the nodes are the $\algDelayToExplain$ used in Algorithm \ref{alg:cmis-greedy}. We consider $\express_i = 0$ for all $i \in \trips$, $\tol= 1$, $\lb = 1$ and $\ub = 3$ for this example.

To find the minimal subsequence that explains the delays of  trips 4 and 6 $\in \delayMIS$, we start by backtracking from trip 6. The minimum start time for this trip to be delayed is 96, thus, $\algDelayToExplain = 96$ (line 5 of Algorithm \ref{alg:cmis-greedy}). We now take its immediate predecessor, trip 5, and check if it the sole source of the delay of trip 6. Note that if trip 5 start as early as possible (i.e., $\stime_5 -\lb = 72$), then trip 6 would be on time because $y_6 =  \stime_5 - \lb + \dur_5^s + \travel_{5,6}^s = 72 + 23 = 95 \leq \stime_6 + \ub $. Therefore, trip 5 starts slightly later for trip 6 to be delay, that is, it must start at time  $\algDelayToExplain = 73$ or later (line 12 of Algorithm \ref{alg:cmis-greedy}). Once again, we take the predecessor of trip 5 (i.e., trip 4) and check if it can explain the start time of 5. Trip 4 has to start at time 59 for trip 5 to start at time 73, which also coincide with the earliest start time of trip 4 to be delay, thus,  $\algDelayToExplain = 59$ (line 14 of Algorithm \ref{alg:cmis-greedy}). Lastly, we analyze the predecessor of trip 4 (i.e., trip 3), and we see that if trip 3 starts as early as possible (i.e., $\stime_3 - \lb = 38$), then trip 4 starts at time 59, which coincides with $\algDelayToExplain$. Therefore, the subsequence  (3,4,5,6) explains the delays of trips 4 and 6 $\in \delayMIS$ at scenario $s\in \scenarios$. This subsequence is minimal because removing trip 3 from it will result of trips 4 and 6 to arrive on-time. 
\hfill $\square$
\end{example}

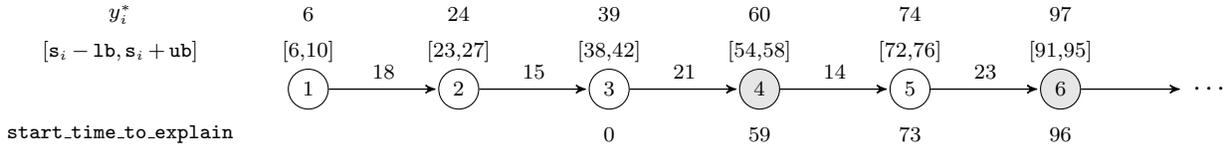
\begin{figure}[htb]
    \centering
    \begin{tikzpicture}[->,>=stealth',shorten >=1pt,auto,node distance=1cm]

\node[node trip] (n1) at (0,0) {$1$};
\node[node trip] (n2) at (2,0) {$2$};
\node[node trip] (n3) at (4,0) {$3$};
\node[node trip delayed] (n4) at (6,0) {$4$};
\node[node trip] (n5) at (8,0) {$5$};
\node[node trip delayed] (n6) at (10,0) {$6$};
\node (dots) at (12,0) {$\cdots$};

\node[tw] (tw0) at (-2.5,0.5)  {$[\stime_i - \lb,\stime_i + \ub]$};
\node[tw] (tw1) at (0,0.5)  {[6,10]};
\node[tw] (tw2) at (2,0.5)  {[23,27]};
\node[tw] (tw3) at (4,0.5)  {[38,42]};
\node[tw] (tw4) at (6,0.5)  {[54,58]};
\node[tw] (tw5) at (8,0.5)  {[72,76]};
\node[tw] (tw6) at (10,0.5) {[91,95]};

\node[tw] (y0) at (-2.5,1)  {$y^*_i$};
\node[tw] (y1) at (0,1)  {6};
\node[tw] (y2) at (2,1)  {24};
\node[tw] (y3) at (4,1)  {39};
\node[tw] (y4) at (6,1)  {60};
\node[tw] (y5) at (8,1)  {74};
\node[tw] (y6) at (10,1)  {97};

\node[tw] (e0) at (-2.5,-0.6)  {\algDelayToExplain};
\node[tw] (e3) at (4,-0.6)  {0};
\node[tw] (e4) at (6,-0.6)  {59};
\node[tw] (e5) at (8,-0.6)  {73};
\node[tw] (e6) at (10,-0.6)  {96};

\path[every node/.style={font=\scriptsize}]
(n1)
edge[arc] node [above] {18} (n2)
(n2)
edge[arc] node [above] {15} (n3)
(n3)
edge[arc] node [above] {21} (n4)
(n4)
edge[arc] node [above] {14} (n5)
(n5)
edge[arc] node [above] {23} (n6)
(n6)
edge[arc] node [above] {} (dots)
;
\end{tikzpicture}
    \vspace{-2em}
    \caption{Graphical example on how to compute a C-MIS using Algorithm \ref{alg:cmis-greedy}.}
    \label{fig:cmis-greedy}
\end{figure}

\begin{proposition}\label{prop:greedy_mis}
Consider a vehicle schedule $\bus(\xhat)$ and a scenario $\s\in \scenarios$ such that at least one of the service requirements is violated. Then, for any such constraint $\con \in \{\eqref{ccs:service-trips}, \{\eqref{ccs:service-routes}\}_{r\in \routes} \}$, 
Algorithm \ref{alg:cmis-greedy} builds a C-MIS $\scheduleCMIS$.
\end{proposition}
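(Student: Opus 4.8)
The plan is to establish two facts about the output $\scheduleCMIS$ of Algorithm \ref{alg:cmis-greedy}: that it is infeasible for the relaxation of \eqref{model:subproblem} in which only the single chosen constraint $\con$ is retained (so it is an IS), and that it is inclusion-minimal with this property (so it is an MIS, hence a C-MIS). The whole argument rests on a monotonicity observation that I would state first: by the recursion \eqref{eq:earliest-start-time}, the earliest start time of any trip along a fixed trip subsequence is a nondecreasing function of the start time assigned to the first trip of that subsequence. Iterating this along a chain shows that, for a fixed ordering of trips on a bus, the earliest start time of the last trip is monotone in the start time of any upstream trip; this is the engine behind both the ``explains the delay'' notion and the minimality claim.

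First I would verify that the backtracking in Algorithm \ref{alg:cmis-greedy} produces, for each trip of $\delayMIS$, a \emph{minimal} subsequence explaining its delay. The key is the loop invariant that at every iteration $\algDelayToExplain$ equals the smallest start time of the current trip $i$ that still forces every already-processed trip of $\delayMIS$ downstream of $i$ to be delayed (the $+\tol$ accounting for the strict inequality $y_i > \stime_i + \ub$ that defines a delay). Line 12 updates this threshold by subtracting the running time $\dur_j^\s+\travel_{ji}^\s-\express_j$ of the predecessor $j$, while line 14 raises it to $\stime_j+\ub+\tol$ exactly when $j\in\delayMIS$ must itself be delayed. The stopping test on line 9 fires precisely when $j$ starting as early as possible (at $\stime_j-\lb$) already pushes $i$'s start to at least $\algDelayToExplain$; this certifies that the collected subsequence explains the delays, and that it is minimal, since by the invariant dropping its first trip would let the next trip start strictly earlier and, by monotonicity, turn the corresponding $\delayMIS$ trip on time.

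Next I would prove $\scheduleCMIS$ is an IS for the relaxation retaining $\con$. Consider any feasible schedule whose sequenced trip pairs contain $\scheduleCMIS$. For each $d\in\delayMIS$, these pairs contain the explaining subsequence ending at $d$; since that subsequence explains $d$'s delay and the first trip of any bus starts no earlier than $\stime_i-\lb$, the monotone propagation \eqref{eq:earliest-start-time} forces $y_d>\stime_d+\ub$, i.e. $d$ is delayed. Hence all $|\delayMIS|$ trips of $\delayMIS$ are late. By the Step (i) cardinality $|\delayMIS|=\numtrips-\numserviceTrip+1$ when $\con$ is \eqref{ccs:service-trips} (and the analogous route cardinality when $\con$ is \eqref{ccs:service-routes}), this is exactly enough late trips to violate $\con$, so every such schedule is infeasible.

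Finally, for minimality I would, for each pair $(j,i)\in\scheduleCMIS$, exhibit a feasible schedule containing $\scheduleCMIS\setminus\{(j,i)\}$ that satisfies $\con$. The pair lies on the explaining chain of some bus, with $i$ weakly upstream of the last trip $d\in\delayMIS$ of that chain. I would build the schedule by severing the chain at $(j,i)$ so that $i$ begins a fresh sequence starting at $\stime_i-\lb$, keeping every other retained pair intact and placing all remaining trips as singleton sequences that start on time. Because $i$ now starts as early as possible, monotonicity together with the minimality established above makes $d$ (and every $\delayMIS$ trip weakly downstream of $i$) on time, while the trips upstream of the cut and the singletons introduce no new lateness. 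Thus at least one trip of $\delayMIS$ switches to on time, dropping the count of late trips below the violating threshold and satisfying $\con$. Combining the two parts shows $\scheduleCMIS$ is a C-MIS. The step I expect to be most delicate is this last one: one must argue that cutting an \emph{internal} pair of a chain that simultaneously explains several $\delayMIS$ trips still frees the downstream trip $d$, which is where the monotonicity lemma and the predecessor condition (guaranteeing that all delayed predecessors of a $\delayMIS$ trip also lie in $\delayMIS$, so that no stray late trips are hiding inside the chain) do the real work.
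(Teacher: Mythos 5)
Your proof takes essentially the same route as the paper's: first show $\scheduleCMIS$ is an IS because each chosen subsequence explains the delay of its $\delayMIS$ trip even when its first trip starts as early as possible, then obtain minimality from the minimality of each explaining subsequence, the minimal cardinality of $\delayMIS$, and the predecessor condition ensuring no stray delayed trips enter the chains. You simply make explicit what the paper leaves as ``by construction'' --- the monotonicity of start-time propagation, the loop invariant on $\algDelayToExplain$, and the witness schedule obtained by severing a chain --- which is a sound elaboration rather than a different argument.
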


Algorithm \ref{alg:cmis-greedy} has two main advantages when compared to the conflict refinement procedure explained in Section \ref{sec:mis-conlfic}. First, it is a polynomial-time algorithm since it iterates two times over the set of trips  (i.e., for choosing $\delayMIS$ and for explaining their delays). Second, the procedure allows us to obtain multiple C-MISs by choosing different violated service requirements and $\delayMIS$. Our implementation takes advantage of this fact in that it creates a C-MIS for each violated service constraint (i.e.,  \eqref{ccs:service-trips} and \eqref{ccs:service-routes}), and returns their corresponding the violated cuts. Lastly, the resulting cut is equivalent to \eqref{eq:mis-cut} since we only change the procedure for obtaining a C-MIS.

As a final remark, we note that $\scheduleCMIS$ might not be an MIS for the subproblem of scenario $\s \in \scenarios$ if there are multiple service requirement violations, because it only considers a single service requirement constraint at a time. Nevertheless, our implementation constructs a C-MIS for each violated service requirement constraint for a given scenario $\s\in \scenarios$ and $\bxhat$, and we empirically observed that at least one of them is indeed an MIS. 
 
\subsection{Extended C-MIS Cuts}\label{sec:mis-strong}

We now present a procedure to find stronger cuts by extending a C-MIS. The main idea is to find alternative subsequences of trips that also explain the delay of the trips in  $\delayMIS$. Note that the trip pairs in $\scheduleCMIS$ can be represented as disjoint trip subsequences $(i_1,...,i_p)$ where $p$ is the size of a particular sequence and $(i,j)$ are consecutive trips in a subsequence if and only if $(i,j)\in\scheduleCMIS$. 

Consider a subsequence $(i_1,...,i_p)$ of $\scheduleCMIS$ with $i_p \in \delayMIS$. To create alternative subsequences that also explain the delay of $i_p$, we seek for replacements of the initial trip $i_1$ that will also explain the delay of $i_p$ and any other delayed trip such subsequence. Formally, we look for trips $i_1' \in \trips$ such that $(i_1',i_2) \in \compatible$, $i_1'$ does not appear in any trip pair in $\scheduleCMIS$, and the new sequence $(i_1', i_2,...,i_p)$ explains the delay of $i_p$ and potentially other trips in $\delayMIS$ that appear in $(i_2,..., i_{p-1})$. If any such trip $i_1'$ exists, we add the trip pair $(i_1',i_2)$ to a set of additional pairs $\scheduleMISExtra$. We then create an extended C-MIS cut that considers all trip pairs in $\scheduleCMIS \cup \scheduleMISExtra$, as shown in \eqref{eq:mis-cut-strong}. Proposition \ref{prop:extended-cmis} shows the validity of the cut and its dominance relationship to \eqref{eq:mis-cut}.

\begin{proposition} \label{prop:extended-cmis}
    Consider a vehicle schedule $\bus(\xhat)$, a scenario $\s\in \scenarios$ such that at least one of the service requirements  $\con \in \{\eqref{ccs:service-trips}, \{\eqref{ccs:service-routes}\}_{r\in \routes} \}$ is violated, and a C-MIS $\scheduleCMIS$ built using $\con$. Then, the following constraint is valid and dominates \eqref{eq:mis-cut} with $\scheduleMIS = \scheduleCMIS$, 
    \begin{equation}\label{eq:mis-cut-strong}
    \sum_{(i,j) \in \scheduleCMIS \cup \scheduleMISExtra}\sum_{k \in \depots} x_{ijk} \leq |\scheduleCMIS| - 1 +  z_\s,
    \end{equation}
    where $\scheduleMISExtra$ is constructed as previously explained. 
\end{proposition}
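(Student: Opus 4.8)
The plan is to prove the two assertions of the proposition separately: dominance, which is essentially immediate, and validity, which carries the real content. Throughout, the one structural fact I will lean on is the trip-once constraint \eqref{cc:tripOnce}, which guarantees that each trip $j\in\trips$ has exactly one incoming arc summed over all depots.

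I would dispose of dominance first. The cuts \eqref{eq:mis-cut-strong} and \eqref{eq:mis-cut} (with $\scheduleMIS=\scheduleCMIS$) share the identical right-hand side $|\scheduleCMIS|-1+z_\s$, while the left-hand side of \eqref{eq:mis-cut-strong} sums over the superset $\scheduleCMIS\cup\scheduleMISExtra\supseteq\scheduleCMIS$. Since every $x_{ijk}\geq 0$, the left-hand side of \eqref{eq:mis-cut-strong} is at least that of \eqref{eq:mis-cut}, so any point satisfying \eqref{eq:mis-cut-strong} satisfies \eqref{eq:mis-cut}; hence \eqref{eq:mis-cut-strong} dominates.

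For validity I would begin with a counting bound. The pairs of $\scheduleCMIS$ lie along the disjoint paths of $\bus(\bxhat)$ and thus decompose into disjoint subsequences, so within $\scheduleCMIS$ every pair has a distinct second element and there are exactly $|\scheduleCMIS|$ such trips. Crucially, each pair added to $\scheduleMISExtra$ has the form $(i_1',i_2)$ and reuses the second element $i_2$ of the leading pair of a subsequence; therefore $\scheduleCMIS\cup\scheduleMISExtra$ introduces \emph{no new} second elements and its set of distinct second elements is still these $|\scheduleCMIS|$ trips. Grouping the left-hand side of \eqref{eq:mis-cut-strong} by second element and applying \eqref{cc:tripOnce} — a subset of the arcs into a fixed $j$ can contribute at most one — yields $\sum_{(i,j)\in\scheduleCMIS\cup\scheduleMISExtra}\sum_{k\in\depots}x_{ijk}\le|\scheduleCMIS|$ for every $\bx\in\mcfset$. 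This already settles validity when $z_\s=1$, since the right-hand side is then $|\scheduleCMIS|$. The remaining case $z_\s=0$ requires sharpening the bound to $|\scheduleCMIS|-1$; as the left-hand side is integer and at most $|\scheduleCMIS|$, it suffices to exclude equality. Equality forces, for each distinct second element, exactly one incoming pair from $\scheduleCMIS\cup\scheduleMISExtra$ to be selected: for interior positions the only such pair lies in $\scheduleCMIS$, while for the leading position either the original pair $(i_1,i_2)$ or one replacement $(i_1',i_2)\in\scheduleMISExtra$ is chosen. Hence every subsequence is realized in the schedule either as the original chain $(i_1,\dots,i_p)$ or as a modified chain $(i_1',i_2,\dots,i_p)$. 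By Proposition \ref{prop:greedy_mis} the original chains explain the delays of all $\delayMIS$-trips they contain, and by the defining property of $\scheduleMISExtra$ the modified chains do as well; since the earliest start time \eqref{eq:earliest-start-time} is nondecreasing in the start time of any predecessor and no trip may begin before $\stime_i-\lb$, every $\delayMIS$-trip is therefore delayed in scenario $\s$. But $\delayMIS$ was chosen in Step (i) as a minimal delayed set that violates $\con$, so delaying all of $\delayMIS$ means \eqref{ccs:service-trips} (or \eqref{ccs:service-routes}) cannot hold with $z=0$, i.e. $(\bx,0)\notin\subproblem^\s$, contradicting the feasibility of $z_\s=0$. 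This contradiction gives the strict inequality and completes the proof.

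I expect the $z_\s=0$ case to be the main obstacle, and within it two points need care. First, one must argue that a modified chain $(i_1',i_2,\dots,i_p)$ is a genuine path and still delays every $\delayMIS$-trip it contains; this is precisely where the construction requirement that $i_1'$ not appear in any pair of $\scheduleCMIS$ is used, to prevent $i_1'$ from coinciding with a later trip of the chain. Second, the delay-monotonicity step must be stated cleanly: the property ``explains the delay when the head starts as early as possible'' transfers to the delay actually realized in the schedule, where the head may start later but, by \eqref{ccs:start-lb}, never earlier than $\stime-\lb$.
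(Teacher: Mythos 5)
Your proof is correct and follows the same route as the paper's: dominance via the identical right-hand side and the enlarged, nonnegatively-weighted left-hand side, and validity via the construction of $\scheduleMISExtra$ together with the trip-once constraint \eqref{cc:tripOnce}. The paper disposes of validity in a single sentence; your counting argument over distinct second elements and the case analysis on $z_\s$ supply exactly the details it leaves implicit.
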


\begin{proof}{Proof.}
    The validity of the cut follows from the procedure for finding additional trip pairs $\scheduleMISExtra$ and that each trip has only one predecessor (i.e., inequality \eqref{cc:tripOnce}). To prove dominance, note that \eqref{eq:mis-cut-strong} and \eqref{eq:mis-cut} with $\scheduleMIS = \scheduleCMIS$ have the same right-hand side but \eqref{eq:mis-cut-strong} considers more variables by incorporating all the alternative pairs in $\scheduleMISExtra$. Thus, the extended version removes the same schedules that \eqref{eq:mis-cut}, but it also removes the alternative schedules built with $\scheduleMISExtra$. \hfill $\square$
\end{proof}

\subsection{LP-based Benders Cuts}\label{sec:lp-cuts}

We now present a procedure to find cuts for each integer master solution $\bxhat$ and an infeasible scenario $\s \in \scenarios$ using a strengthened version of the linear programming (LP) relaxation of \eqref{model:subproblem} and show that the resulting cuts are equivalent to our C-MIS cuts. 

While we can directly use the LP relaxation to derive cuts, preliminary experiments show that such relaxation is quite weak and, in most cases, does not produce a violated cut when a schedule does not meet the service requirements. Thus,  we present a procedure that tightens the coefficients of the LP relaxation and introduces additional inequalities using the optimal solution of Algorithm \ref{alg:greedy} to close the integrality gap and return valid cuts for the master problem. 

We now show how to strengthen the LP relaxation of \eqref{model:subproblem}
utilizing the optimal solution $(z^*, \by^*, \bv^*, \bu^*)$ obtained with Algorithm \ref{alg:greedy} to indirectly enforce integrality on variables $\bv$ and $z$ in its optimal solution. First, the value of $v_i$ for each $i\in \trips$ is enforced by constraint \eqref{ccs:start-ub} and is directly impacted by the big-M coefficient $\bigMotp_i$. While $\bigMotp_i$ can be arbitrarily large for the MILP formulation, we need it to be as small as possible to force $v_i=0$ whenever $y^*_i > \stime_i + \ub$ for $i \in \trips$. Thus, we set $\bigMotp_i =y^*_i - \stime_i$ for every particular $\bxhat$ and $\s$ to ensure the integrality of $v_i$ in an optimal solution of the strengthened LP. 

To enforce integrality on $z$, we include an additional constraint that relates the set of delayed trips with $z$. Specifically, we pick one constraint in $\con \in \{\eqref{ccs:service-trips}, \{\eqref{ccs:service-routes}\}_{r\in \routes} \}$ that is violated at the solution obtained by Algorithm \ref{alg:greedy} and construct a $\delayMIS$ using the procedure detailed in \textit{Step (i)} of Section \ref{sec:mis-greedy}. Using this set, we add the following inequality to the LP relaxation of the subproblem:
\begin{equation} \label{eq:extra-lp-cuts}
    \sum_{i \in \delayMIS} (1 - v_i) \leq z + |\delayMIS| - 1.
\end{equation}
Note that in any optimal solution $v_i = 0$ for all $i \in \delayMIS$ because of the tightened $\bigMotp_i$ coefficients, so \eqref{eq:extra-lp-cuts} forces $z=1$ in such solutions. 

Lastly, to guarantee the validity of the resulting cuts and prove its relationship to  \eqref{eq:mis-cut} with its corresponding $\scheduleCMIS$ (i.e., the $\scheduleCMIS$ returned by Algorithm \ref{alg:cmis-greedy} for the $\delayMIS$ use in \eqref{eq:extra-lp-cuts}), we also tighten the big-M coefficients  $\bigMstart_{ji}$ for all $(j,i)\in \compatible$ as follows:
\[ \bigMstart_{ji} = \begin{cases}
y^*_i, &  \forall (j,i) \in \schedule \\
\text{large enough number}, & \text{otherwise.} 
\end{cases}\]
Note that these tighten coefficients only appear in \eqref{ccs:start-trip} and are valid for the model. Also, they do not affect the integrality of the variables $\bv$ and $z$ in the optimal solution, but are important for our proof. To summarize, the tightened LP subproblem is given by
\begin{equation}
\min_{z \in [0,1]} \left\{ z  :\; (\bxhat, z) \in \subproblemLP^\s\cap \{ \eqref{eq:extra-lp-cuts} \}
\right\}, \tag{\textsf{SAA-Sub-LP($\s, \bxhat$)}} \label{model:subproblem-lp} 
\end{equation}
where $\subproblemLP^\s$ is the LP relaxation of $\subproblem^\s$ with the tightened $\bigMotp_i$ and $\bigMstart_{ij}$ coefficients for every $i \in \trips$ and $(i,j)\in \compatible$, respectively. Appendix \ref{appendix:lpcuts-proof} shows in detail \eqref{model:subproblem-lp} and its dual. 

Given the tightened LP model, we employ the dual of \eqref{model:subproblem-lp} to derive the following Bender's cut
\begin{equation}\label{eq:lp-cut}
    \sum_{(i,j) \in \compatible}\dualstart_{ij}\bigMstart_{ij}\sum_{k \in \depots} x_{ijk} \leq \sum_{(i,j) \in \compatible}\dualstart_{ij}\bigMstart_{ij}\sum_{k \in \depots} \xhat_{ijk} - 1 + z_\s,
\end{equation}
where $\balpha\geq 0$ are the dual variables associated with constraints \eqref{ccs:start-trip}. Proposition \ref{prop:lp-and-mis-cuts} states the validity of the cut and shows that there exists a dual solution such that \eqref{eq:lp-cut} is equivalent to the C-MIS cut \eqref{eq:mis-cut} associated with $\scheduleCMIS$. Appendix \ref{appendix:lpcuts-proof} details the proof of the proposition and shows how to construct such dual solution. 

\begin{proposition} \label{prop:lp-and-mis-cuts}
    Consider the master solution $\bxhat$ that violates one of the service requirements $\con \in \{\eqref{ccs:service-trips}, \{\eqref{ccs:service-routes}\}_{r\in \routes} \}$ for a scenario $\s \in \scenarios$, and a $\delayMIS$ constructed using $\con$. Then, there exists a dual optimal solution of \eqref{model:subproblem-lp} such that \eqref{eq:lp-cut} and the C-MIS cut \eqref{eq:mis-cut} for the corresponding $\scheduleCMIS$ are identical. 
\end{proposition}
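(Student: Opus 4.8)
The plan is to exhibit an explicit optimal solution of the dual of the strengthened LP subproblem \eqref{model:subproblem-lp} whose component $\balpha$ on the start-time constraints \eqref{ccs:start-trip} is supported exactly on the C-MIS arcs, and then to read the cut \eqref{eq:lp-cut} off this dual and match it term by term with \eqref{eq:mis-cut}. First I would write the dual of \eqref{model:subproblem-lp}, associating $\balpha\ge 0$ with \eqref{ccs:start-trip}, $\dualOTP$ with \eqref{ccs:start-ub}, $\duallb\ge 0$ with \eqref{ccs:start-lb}, $\dualtrip,\dualroute\ge 0$ with the service requirements \eqref{ccs:service-trips}--\eqref{ccs:service-routes}, $\dualMIS\ge 0$ with the added inequality \eqref{eq:extra-lp-cuts}, together with the dual variables for the box constraints on $\bv$ and $\bu$. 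Because $\bxhat$ violates $\con$, the primal optimum is $z^*=1$; the tightened coefficient $\bigMotp_i=y^*_i-\stime_i$ forces $v^*_i=0$ for every $i\in\delayMIS$, and it is precisely \eqref{eq:extra-lp-cuts} that drives $z$ to $1$. This pins down the objective value I must reproduce in the dual.

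Next I would define the candidate dual point. I set $\dualMIS=1$, the multiplier that transmits the forcing of $z\ge 1$ into the cut and accounts for its constant term, and on each minimal delay-explaining subsequence making up $\scheduleCMIS$ I propagate dual weight backward through the binding start-time constraints, setting $\dualstart_{ij}=1/\bigMstart_{ij}=1/y^*_j$ for $(i,j)\in\scheduleCMIS$ and $\dualstart_{ij}=0$ otherwise. The tightened value $\bigMstart_{ij}=y^*_j$ is exactly what makes the product $\dualstart_{ij}\bigMstart_{ij}=1$ on every C-MIS arc, which is the coefficient needed to match \eqref{eq:mis-cut}. The remaining multipliers $\dualOTP_i$, $\duallb_i$, $\dualtrip$, $\dualroute$, and the box duals are then fixed to absorb the residuals in the dual constraints attached to the primal variables $y_i$, $v_i$, $u_i$, and $z$.

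The core of the argument is verifying that this point is dual feasible and optimal. For feasibility I would check the stationarity constraint for each $y_i$ -- a flow-balance-type relation among the incoming and outgoing $\dualstart$ on C-MIS arcs, corrected by $\dualOTP_i$ and $\duallb_i$ -- along with the constraints for $v_i$ and $u_i$; the key structural inputs are that along a C-MIS subsequence the greedy recursion \eqref{eq:earliest-start-time} is attained by the predecessor term (so \eqref{ccs:start-trip} is binding, matching complementary slackness with $\dualstart>0$), that $u^*_i=\express_i$ is tight, and that $v^*_i=0$ exactly on $\delayMIS$. Minimality and the predecessor condition guaranteed by Proposition \ref{prop:greedy_mis} ensure that each subsequence begins where the lower-bound constraint \eqref{ccs:start-lb} becomes active, so no dual weight leaks onto arcs outside $\scheduleCMIS$ and the telescoping closes consistently. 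Optimality then follows by checking that the dual objective equals the primal value $1$, equivalently that all complementary slackness conditions hold. The main obstacle I anticipate is exactly this bookkeeping: confirming that the single scaling $\dualstart_{ij}=1/y^*_j$ can be reconciled with the differing values on consecutive arcs of a chain through the $\dualOTP_i$ and $\duallb_i$ corrections while keeping all sign conditions and complementary slackness intact. Once feasibility and optimality are established, substituting $\dualstart$ into \eqref{eq:lp-cut}, using $\dualstart_{ij}\bigMstart_{ij}=\mathbb{1}[(i,j)\in\scheduleCMIS]$ and $\sum_{k\in\depots}\xhat_{ijk}=1$ for each $(i,j)\in\scheduleCMIS$, collapses \eqref{eq:lp-cut} exactly into the C-MIS cut \eqref{eq:mis-cut} with $\scheduleMIS=\scheduleCMIS$.
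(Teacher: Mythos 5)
Your proposal is correct and follows essentially the same route as the paper's proof in Appendix A.4: an explicit dual solution with $\dualMIS=1$, $\dualstart$ supported on the C-MIS arcs with value $1/\bigMstart$ (the reciprocal of the successor's greedy start time, so that $\dualstart\cdot\bigMstart=1$), and the remaining multipliers determined by the stationarity conditions on $\by$. The bookkeeping you flag as the main obstacle is exactly what the paper carries out by partitioning the C-MIS trips into first, middle, and last elements of each subsequence, assigning $\duallb_j=1/y^*_i$ to first trips, $\dualOTP_j=1/y^*_j$ to last trips, and $\dualOTP_j=1/y^*_j-1/y^*_i$ to middle trips, after which the objective telescopes to the primal value $1$.
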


While Proposition \ref{prop:lp-and-mis-cuts} states how to obtain a C-MIS cut using \eqref{model:subproblem-lp}, preliminary results show that it is computationally more efficient to obtain such cuts using Algorithm \ref{alg:cmis-greedy}. Therefore, we omit this alternative from our empirical results in Section \ref{sec:experiments}. Nonetheless, we believe that this insightful result could be beneficial for other CCP problems where obtaining a C-MIS (or MIS) could be computationally challenging.

\subsection{B\&C Modification Given Cuts Structure} \label{sec:b&c-zcont}

We present a modification to the B\&C procedure presented in Section \ref{sec:decomposition} that takes advantage of the specific structure of our cut variants.  
In particular, we propose relaxing the integrality constraints on $\bz$ when solving the master problem \eqref{model:master}. 
With this change, our decomposition scheme depicted in Figure \ref{fig:decomposition-diagram} enters the cut generation routine for each scenario $\s \in \scenarios$ when $\zhat_\s <1$ in the master problem, in contrast to only when $\zhat_\s =0$. Proposition \ref{prop:z-continuous} states the validity of this modification, whose proof (in Appendix \ref{appendix:prop:z-continuous-proof}) follows from the structure of our proposed cuts. 

\begin{proposition}\label{prop:z-continuous}
    The B\&C decomposition scheme with the previously mentioned modification converges in a finite number of iterations and returns an optimal schedule to \eqref{model:chance-saa}.
\end{proposition}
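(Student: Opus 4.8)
The plan is to split the claim into two parts, \emph{validity} (the scheme returns an optimal solution of \eqref{model:chance-saa}) and \emph{finite convergence}, both of which hinge on a single structural observation about the cut family \eqref{master:cuts-added}. Every such cut has the form $\sum_{(i,j)\in\arcs}\lambda_{ij}\sum_{k\in\depots} x_{ijk} \leq \lambda_0 - 1 + z_\s$ with integer coefficients $\lambda_{ij}$ and integer constant $\lambda_0$ (for the MIS cut \eqref{eq:mis-cut}, $\lambda_{ij}\in\{0,1\}$ and $\lambda_0 = |\scheduleMIS|$, and analogously for the no-good and extended variants). Hence at any integer $\bxhat$ the left-hand side is integer. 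First I would show that, at integer $\bx$, each such cut forces $z_\s = 1$ exactly when $\bxhat$ contains the full associated infeasible set and is vacuous otherwise: if $\bxhat$ contains all pairs of $\scheduleMIS$ the left-hand side equals $|\scheduleMIS|$, forcing $z_\s \geq 1$, which together with the relaxed bound $z_\s \leq 1$ yields $z_\s = 1$. Thus, even though $\bz$ is relaxed to $[0,1]$, the cuts reproduce the correct binary behaviour of each $z_\s$ at every integer $\bx$, which is precisely what makes relaxing the integrality of $\bz$ harmless (note also that the objective depends only on $\bx$, so relaxing $\bz$ cannot alter the value attained by any fixed schedule).

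Next I would verify that the modified termination test correctly certifies feasibility. Fix an integer $\bxhat \in \mcfset$ reached at some node, with $\zhat$ its (possibly fractional) value, and let $V(\bxhat)$ denote the set of scenarios whose service requirements $\bxhat$ truly violates, i.e.\ those $\s$ for which Algorithm \ref{alg:greedy} applied to \eqref{model:subproblem} returns $\zbar = 1$. The modification enters cut generation for every $\s$ with $\zhat_\s < 1$. If no cut is produced, then every scenario with $\zhat_\s < 1$ satisfies its requirements, so $V(\bxhat) \subseteq \{\s \in \scenarios : \zhat_\s = 1\}$; since $|\{\s : \zhat_\s = 1\}| \leq \sum_{\s \in \scenarios} \zhat_\s \leq \lfloor \numscenarios \ccprob \rfloor$ by constraint \eqref{ccs:probability}, we obtain $|V(\bxhat)| \leq \lfloor \numscenarios \ccprob \rfloor$, so $\bxhat$ is feasible for \eqref{model:chance-saa}. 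Conversely, whenever Algorithm \ref{alg:greedy} reports a violation for some $\s$ with $\zhat_\s < 1$, the generated cut forces $z_\s = 1$ at this $\bxhat$ by the structural observation, rendering the current point $(\bxhat, \zhat)$ infeasible and guaranteeing strict separation.

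For finite convergence I would combine the finiteness of the branch-and-bound tree with that of the cut pool. Under the modification the only integer-constrained variables are the bounded binary $\bx$. Each cut is indexed by a pair (infeasible set, scenario) drawn from the finitely many subsets of trip pairs and the $\numscenarios$ scenarios, so only finitely many distinct cuts exist; since every newly added cut separates the current relaxation point and cuts are retained globally, after finitely many are added the formulation stabilises, and the remaining search is an ordinary branch-and-bound over the bounded binary $\bx$, which terminates. Throughout, because all generated cuts are valid for \eqref{model:chance-saa} (established for the C-MIS, extended C-MIS, and LP-based cuts in the preceding results), the relaxed master is a valid relaxation and its bound is a true lower bound; hence the incumbent returned at termination is feasible and attains this bound, so it is optimal.

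The main obstacle is the validity argument under the relaxed $\bz$: one must rule out that a fractional $\zhat$ lets the master evade the budget \eqref{ccs:probability} and thereby certify a spurious feasible point or report an invalid lower bound. The structural observation settles this, but it must be stated carefully, as it relies jointly on the integrality of the left-hand side of \eqref{master:cuts-added} at integer $\bx$ and on the upper bound $z_\s \leq 1$; the subsequent counting argument bounding $|V(\bxhat)|$ by $\lfloor \numscenarios \ccprob \rfloor$ then follows routinely.
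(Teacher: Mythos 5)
Your proof is correct and follows essentially the same route as the paper's: both rest on the observation that every cut of the form \eqref{master:cuts-added} is tight at the integer schedule that generated it (via \eqref{cc:tripOnce}), so together with the bound $z_\s \le 1$ it forces $z_\s = 1$ there, and finite convergence then follows from the finitely many candidate schedules and cuts. Your explicit counting argument bounding the number of truly violated scenarios by $\lfloor \numscenarios \ccprob \rfloor$ using $\sum_{\s \in \scenarios}\zhat_\s$ at a fractional $\zhat$ spells out a step the paper leaves implicit, but it is the same underlying argument.
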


We note that relaxing the integrality constraints of $\bz$ is quite general and can be applied to other CCP problems where the generated cuts have a similar form to \eqref{master:cuts-added}, i.e., they are tight at the master solution producing them and enforce the integrality of $\bz$. 
Our computational results show that this modification can 
reduce the solution time and increase the number of instances solved because it significantly decreases the size of the B\&C tree by avoiding branching on $\bz$.

\section{Lagrangian-based Approach for Large-scale Instances}\label{sec:large-scale}

The branch-and-cut procedure presented in Section \ref{sec:decomposition} can obtain optimal solutions for the scenario reformulation of our problem. However, our experimental results show that it can only handle problem instances with a small number of trips in a reasonable amount of time (i.e., solve instances with 80 trips and four depots in less than two hours).
In contrast, real-world instances of the MDVSP based on small and medium-sized cities can consider hundreds or thousands of trips. 
Therefore, we need a procedure that can find vehicle schedules that satisfy the service requirements for large-size instances. 

To do so, we propose a Lagrangian-based technique that leverages our exact procedure to find low-cost vehicle schedules that satisfy the service requirements. The main idea is to partition the set of trips and create a subproblem for each partition that can be efficiently solved with our B\&C decomposition scheme. We can then devise a procedure to combine the scheduling solution of each subproblem and create a vehicle schedule that satisfies all the planning constraints of the original problem. To enforce the operational constraints modeled with the CC, we utilize Lagrangian penalties to ensure that the number of scenarios that violate the service requirements is small across all partition subproblems. As a result, our methodology finds a low-cost solution that is feasible for the original problem or violates the CC in a small amount (i.e., a few scenarios). 

Next, we detail the main components of our procedure. Section \ref{sec:ls-partition-trips} presents the proposed strategies to partition the set of trips and the resulting subproblems. Then, Section \ref{sec:ls-larg-decomposition} introduces our Lagrangian decomposition scheme to enforce the CC, and Section \ref{sec:ls-larg-dual} shows how we solve the resulting Lagrangian dual problem and obtain a vehicle schedule with the required characteristics.

\subsection{Trips Partitioning and Subproblems} \label{sec:ls-partition-trips}

Our procedure starts by partitioning the set of trips $\trips$ into small-size groups to create one CC-MDVSP subproblem for each partition. Constructing a trip partition that results in a minimum-cost schedule is a type of bin-packing problem and, as such, an NP-hard problem. Thus, this work considers a heuristic approach for partitioning trips that lead to low-cost vehicle schedules.

Since the trip partition directly affects the cost of the resulting schedule, we want to partition the trips such that several trips in a group can be scheduled on the same bus to minimize the number of buses in the scheduling solution of a subproblem (i.e., the largest cost component of the problem). To do so, we solve a deterministic MDVSP (e.g., using average times and ignoring service requirements) and use the resulting solution to assign trips to the different groups such that trips in the same bus are assigned to the same group. Algorithm \ref{alg:partition-trips} details this assignment where $\groupsize$ is the minimum size of a group, $\bus(\bxhat)$ is the scheduling solution of the deterministic MDVSP, and  $\tripgroups= \{\trips_1, ... ,\trips_\numpartition\}$ is the trip partition, where $\partitions=\{1,...,\numpartition\}$ is the set of indices of the partition of size $\numpartition=|\partitions|$. Other heuristics or exact strategies can also be considered, which we leave as future work directions. 

\begin{algorithm}[b] 
\small
    \linespread{1.2}\selectfont
	\caption{Trip partitioning based on deterministic solution}\label{alg:partition-trips}
	\begin{algorithmic}[1]
		\Procedure{\algPartition}{$\bus(\bxhat)$, $\groupsize$}
            \State $\tripgroups = \trips^{\texttt{aux}} = \emptyset$
		\For {bus $\bus \in \bus(\bxhat)$ }
		    \State Add trips to the current group:  $\trips^{\texttt{aux}} = \trips^{\texttt{aux}} \cup \bus$
		    \If {$\trips^{\texttt{aux}} \geq \groupsize$} 
            \State Add group to trip partition: $\tripgroups = \tripgroups \cup \trips^{\texttt{aux}}$, 
            and reset group:  $\trips^{\texttt{aux}} = \emptyset$
            \EndIf
		\EndFor
    \Return trip partition $\tripgroups$
	\EndProcedure
	\end{algorithmic} 
\end{algorithm}

Algorithm \ref{alg:partition-trips} creates a partition of $\trips$ such that all trips are assigned to a single group. Then, our Lagrangian-based procedure uses $\tripgroups$ to create a subproblem for each group $p \in \partitions$ given by
\begin{subequations}
\begin{align}
    \min \; & \sum_{k \in \depots}\sum_{(i,j)\in \arcs^p}\cost_{ij} x^p_{ijk}  \tag{$\textsf{SAA-MCF}^p$} \label{model:chance-saa-group} \\
    \mbox{s.t.}\;
    & \bx^p \in \mcfset^p \nonumber \\
    & \sum_{\s \in \scenarios} z^p_\s \leq  \lfloor \numscenarios \epsilon \rfloor , \label{ccs-p:probability}\\
    & (\bx^p, z^p_\s) \in \subproblem^{p,\s}, & \forall \s \in \scenarios, \label{ccs-p:scenario}\\
    & z^p_s \in \B & \forall \s \in \scenarios.  \nonumber
\end{align}
\end{subequations}
This formulation is identical to \eqref{model:chance-saa} but adjusts the variables and constraints to consider only the trips in $\trips^p$. Specifically, we use superscript $p$ to denote the modified elements that only consider trips in $\trips^p$, that is, the compatible set $\compatible^p$, the set of arcs $\arcs^p$ and the operational constraints $\subproblem^{p,\s}$ for each scenario $\s\in \scenarios$, also add it to the decision variables to distinguish the subproblems. Note that sets $\depots$ and $\scenarios$ are the same for each subproblem.

Once each subproblem is solved, we combine the schedules of \eqref{model:chance-saa-group} for each  $p \in \partitions$ to create a vehicle schedule that satisfies all the planning constraints $\mcfset$ of the original problem. Given that trip groups are pair-wise disjoint, constraints  $\mcfset^p$ for each $p \in \partitions$ will enforce the trips' coverage and the flow balance constraints (i.e., \eqref{cc:tripOnce}, \eqref{cc:flowBalance}-\eqref{cc:consistenDepot} considering all trips). The only constraint that might be violated is the maximum capacity of the depots \eqref{cc:capacity}. In such a case, we heuristically reassign the depot for some trip sequences in order to fulfill the capacity requirement. Specifically, we iteratively seek for a trip sequence $\bus_{b}=(i_1,...,i_n)$ (where $n$ is the number of trips) assigned to an overloaded depot $k'$ and reassign it to a depot $k''$ with available capacity that minimized the cost of the depot reallocation, that is,   $k''\in \argmin_{k \in \depots\setminus\{k'\} }\{ c_{ki_1} + c_{i_n k} : k \text{ has available capacity} \}$.

Therefore, we can use this procedure (i.e., partition the trips, solve the subproblems, and reassign the depots) to create a vehicle schedule that fulfills all the operational constraints of the problem. Unfortunately, this technique will not necessarily enforce the CC of the original problem because \eqref{model:chance-saa-group} enforces individual CCs for each partition $p \in \partitions$ and, thus, ignores the joint behavior. The following section addresses this issue with a Lagrangian decomposition approach.

\subsection{Lagrangian Decomposition} \label{sec:ls-larg-decomposition}


A reason why satisfying the individual CC for each subproblem \eqref{model:chance-saa-group} does not enforce the common CC of the original problem is that each subproblem can unmet the service requirements in different scenarios. This discrepancy usually results in more scenarios with unmet service conditions for the combined vehicle schedule and, thus, a violation of the common CC. Inspired by this observation, we propose a Lagrangian decomposition scheme that limits the set of scenarios with unmet service requirements across all subproblems via Lagrangian penalties. Specifically, we propose model \eqref{model:chance-saa-joint} that combines subproblems \eqref{model:chance-saa-group} for each $p \in \partitions$ and relates the scenario variables for each partition. 
\begin{subequations}
\begin{alignat}{2}
    \min \; & \sum_{p \in \partitions}\sum_{k \in \depots}\sum_{(i,j)\in \arcs^p}\cost_{ij} x^p_{ijk}  \qquad \tag{$\textsf{SAA-MCF-Joint}$} \label{model:chance-saa-joint} \\
    \mbox{s.t.}\;
    & \bx^p \in \mcfset^p,  && \forall p \in \partitions \nonumber   \\
    & \sum_{\s \in \scenarios} z^p_{\s} \leq  \lfloor \numscenarios \epsilon \rfloor  && \forall p \in \partitions,  \label{ccs-lag:min-violation}  \\
    & (\bx^p, z^p_{\s}) \in \subproblem^{p,\s} && \forall \s \in \scenarios, p \in \partitions,  \nonumber \\
    & z^1_{\s} \geq \frac{1}{\numpartition-1}\sum_{p = 2}^\numpartition z^p_{\s} &&  \forall \s \in \scenarios, \label{ccs-lag:common} \\
    & z^p_{\s} \in \B && \forall \s \in \scenarios, p \in \partitions.  \nonumber
\end{alignat}
\end{subequations}

This model includes all the constraints and variables of \eqref{model:chance-saa-group} for each group and links the scenario variables through constraint \eqref{ccs-lag:common}, where $z_{\s p}$ is the indicator variable associated with scenario $\s\in \scenarios$ and group $p \in \partitions$. Specifically, \eqref{ccs-lag:common} forces a scenario variable in the first group to be turn-on if the service requirements are unmet in any other group. Since the number of scenarios in the first group with unmet service conditions is restricted to be less than $\lfloor \numscenarios \epsilon \rfloor$ \eqref{ccs-lag:min-violation}, we guarantee that the total number of scenarios with unmet service requirements across all subproblems is also bounded by that amount.

We develop a Lagrangian decomposition scheme based on \eqref{model:chance-saa-joint} that dualizes \eqref{ccs-lag:common} to create sub-problems for each trip group. The resulting objective function is given by:
\[  \sum_{p \in \partitions}\sum_{k \in \depots}\sum_{(i,j)\in \arcs^p}\cost_{ij} x^p_{ijk}  + \sum_{\s \in \scenarios}\mu_\s \left( -(\numpartition - 1) z^1_{\s} + \sum_{p=2}^\numpartition z^p_{\s} \right), \]
where $\mu_\s\geq 0$ for $\s \in \scenarios$ are the Lagrangian penalties associated with \eqref{ccs-lag:common}. Thus, the resulting Lagrangian dual problem is
\begin{equation}\label{model:lagragiandual}
\max_{\bmu}\left\{ \sum_{p \in \partitions} \lagrsub^p(\bmu):\; \bmu \in \R^{\numscenarios}_+  \right\} \tag{\textsf{LagrDual}}
\end{equation}
where we have one Lagrangian subproblem for each group $p \in \partitions$ given by
\[
\lagrsub^p(\bmu) = \min_{\bx,\bz} \left\{ \sum_{k \in \depots}\sum_{(i,j)\in \arcs^p}\cost_{ij} x_{ijk}  + \lagrconst^p\sum_{\s \in \scenarios}\mu_\s z_\s  :\;  \bx \in \mcfset^p, \eqref{ccs-p:probability}-\eqref{ccs-p:scenario}, \bz \in \B^\numscenarios
\right\}
\]
with constant $\lagrconst^1 = -(\numpartition - 1)$ and $\lagrconst^p = 1$ for all $p \in \partitions\setminus \{1\}$.

This decomposition allows us to find vehicle schedules with a small number of scenarios with unmet service requirements by solving the Lagrangian dual problem. However, this procedure might not necessarily enforce the CC in \eqref{model:chance-saa}. In particular, model  \eqref{model:chance-saa-joint} enforces the service requirements for each trip group (i.e., constraints in set $\subproblem^{p,\s}$), but the service requirements in \eqref{model:chance-saa} (i.e., $\subproblem^\s$) consider all trips together. Thus, there could be a scenario where the service requirements are met for all groups individually, but the service requirements for all trips together might be unmet. Our preliminary experiments show that this behavior could happen but usually leads to only a slight violation of the CC.

\subsection{Solving the Lagrangian Dual Problem} \label{sec:ls-larg-dual}

There is a vast literature on methodologies to solve Lagrangian dual problems, such as sub-gradient and cutting plane method \citep{fisher2004lagrangian,frangioni2005lagrangian}. Our problem \eqref{model:lagragiandual} has the particularity that each subproblem $\lagrsub^p(\bmu)$ for $p \in \partitions$ is an NP-hard problem that can take a significant amount of time to solve due to the CC. Therefore, we opt for the bundle method \citep{Lemarechal1975,frangioni2002generalized} to solve \eqref{model:lagragiandual} since it has a fast converge rate (i.e., $\mathcal{O}(\frac{1}{\epsilon^3})$ for a tolerance $\epsilon>0$) when compared to other alternatives, and thus, requires fewer iterations to converge to the optimal solutions. 

A bundle method is a variant of the cutting plane method, which adds a quadratic stabilizer to improve convergence. The procedure solves subproblems $\lagrsub^p(\hat{\bmu})$ for each $p \in \partitions$ for a given $\hat{\bmu} \geq 0$ and creates a joint subgradient associated to the optimal solution $(\hat{\bx}^p,\hat{\bz}^p)$ of each subproblem as:
\[  \grad(\hat{\bmu})^\top = \left( -(\numpartition - 1) \hat{z}^1_{1} + \sum_{p=2}^\numpartition \hat{z}^p_{1}, \; \dots \;, -(\numpartition - 1) \hat{z}^p_{\numscenarios} + \sum_{p=2}^\numpartition \hat{z}^p_{\numscenarios}  \right) \]
This subgradient is then used to create a cutting plane valid for $\sum_{p \in \partitions}\lagrsub^p(\cdot)$, which is added to a quadratic optimization problem that returns the next set of Lagrangian penalties. Specifically, we start with $\bmu_0 = (0,\dots, 0)$ and in each iteration $\ell\geq 0$, we first solve the subproblems $\lagrsub^p(\bmu^\ell)$ for the current $\bmu^\ell$, create subgradient $\grad(\bmu^\ell)$, and add a cutting plane to the following quadratic problem that finds the new set of Lagrangian penalties:
\begin{align*} 
    \bmu^{\ell+1} \in \argmax_{\bmu \in \mathbb{R}^{\numscenarios}_{+}} \bigg \{\; &  \theta + \frac{1}{2t}|| \bmu - \bmu^{\ell} || : \;
    \theta \leq \sum_{p \in \partitions}\lagrsub^p(\bmu^l) + \grad(\bmu^l)^\top(\bmu - \bmu^l), \; \forall l\in \{0,..., \ell\}, \theta \in \mathbb{R} \bigg \}.
\end{align*}  

In this problem, variable $\theta$ over-approximates the optimal value of \eqref{model:lagragiandual}, that is, a valid dual bound for the problem. The objective function includes a quadratic stabilizer $\frac{1}{2t}|| \bmu - \bmu^{\ell} ||$ to improve converge,  where $t<1$ is a positive parameter that is updated in each iteration as suggested by \cite{Lemarechal1975}. The procedure ends when we observe a small relative difference between the Lagrangian primal and dual bound (i.e., $\sum_{p \in \partitions}\lagrsub^p(\bmu^\ell)$ and the optimal value of $\theta$ for iteration $\ell$, respectively) or if there is small relative difference between the primal bounds or dual bounds from one iteration to the next (we use a tolerance of 0.001). In addition, we define a maximum number of iterations (i.e., $\ell\leq 100$) due to the expensive computational cost of solving each iteration. However, our experiments show that in most cases the algorithm converges before this limit is achieved. 

Our procedure also builds a solution for \eqref{model:chance-saa} in each iteration (if possible) and saves the best solution found so far. Specifically, we create a vehicle schedule using the solutions of each subproblem and adjust the depot assignments when necessary, as detailed in Section \ref{sec:ls-partition-trips}. We then evaluate the service requirements $\subproblem^\s$ for each scenario $\s\in \scenarios$ and check if constraint \eqref{ccs:probability} is satisfied or not. We first keep the solutions with the smallest violations of \eqref{ccs:probability} (i.e., fewer scenarios with violated service requirements). If a feasible solution is found, we then only keep the feasible schedules with the lowest cost. We return the best schedule found so far at the end of the procedure. By doing so, we can guarantee that the resulting schedule is either feasible with the smallest cost found so far or infeasible with the smallest violation of the CC.


\section{Empirical Evaluation} \label{sec:experiments}


We now present the numerical experiment results for the B\&C scheme and the Lagrangian-based approach presented in Sections \ref{sec:decomposition} and \ref{sec:large-scale}, respectively, to solve the CC-MDVSP. 
In what follows, we first provide details on the test instances and the experimental setup. We then present experimental results comparing the performances of different cut types for our B\&C procedure and show the value of the CC variant when compared to the deterministic version of the problem used in practice. Finally, we evaluate the solution quality of our Lagrangian-based approach for large-scale instances and compare them to the solutions found by the deterministic version.

\subsection{Experimental Setup} \label{sec:experiments-setup}
We generate random instances of the problem following the procedure described in \citep{carpaneto1989branch,kulkarni2019benchmark} with slight modifications to incorporate the stochastic travel times, expressing, and group trips into routes (see 
Appendix \hyperlink{appendixB}{B} for further details).  We consider instances with $\numtrips\in \{50,60,70,80\}$ trips, 10 trips per route, and $\numdepots \in \{2,3,4\}$ depots. We generate five instances per each $\numtrips$ and $\numdepots$ configuration for a total of 60 randomly generated instances. All instances, unless specify otherwise, utilize  $\epsilon=0.05$, $\serviceTrips = 0.9$, and $\serviceRoute = 0.8$  for the CC probability and the service requirements, respectively. We consider 750 scenarios to solve the SAA formulation of the problem and independently sample 2000 scenarios to evaluate the quality of the solutions. We preliminary tested with different values for $S \in \{500, 750, 1000\} $ and concluded that $\numscenarios = 750$ was suitable for our experiments. Lastly, as commonly assumed in the literature (see, for example, \cite{kulkarni2018new, ricard2022increasing}, we consider all travel times to be integer values, thus, we use $\tol = 1$ when computing our C-MIS cuts. 


The code for the decomposition algorithms is implemented in \texttt{C++} using solver IBM ILOG CPLEX 20.1 with callback functions. All experiments are run on a single thread with a 16GB memory limit using the University of Toronto SciNet server Niagara\footnote{See \url{https://docs.scinet.utoronto.ca/index.php/Niagara_Quickstart} for further server specifications.}. 



\subsection{Efficiency and Value of the B\&C Decomposition Scheme}

To evaluate our B\&C decomposition scheme to solve \eqref{model:chance-saa}, we compare the different cut generation procedures introduced in Section \ref{sec:cutgeneation} and evaluate the impact of other algorithmic enhancements, such as the valid inequalities presented in Section \ref{sec:master-valid-inequalities}. We also assess the quality of the return schedules and show that the solutions are more robust and have a lower cost than the ones obtained by solving the deterministic MDVSP with over-approximations of the traveling times, a practice commonly used by transit agencies \citep{kittelson2003transit,furth2007service}. In these experiments, we use a time limit of 2 hours per instance.

\paragraph{\underline{Cut Comparison.}} We first compare the computational performance of the cut generation alternatives:   MIS cuts \eqref{eq:mis-cut} using the conflict refinement tool of CPLEX (i.e., \misConflict), the C-MIS cuts and its extended version  (i.e., \misGreedy\ and \misGreedyEnhance, respectively).
Table \ref{tab:b&c-cuts} reports the number of problems solved to optimality, average optimality gap, solving time, and added cuts. Optimality gaps are the ones returned by the solver, and the average gap only considers the instances without optimality proof. The average time and added cuts columns only consider the cases where all alternatives found an optimal solution. Bold numbers correspond to the best values, that is, the largest values in a row for the first set of columns and the smallest values in the remaining set of columns.

\begin{table}[htbp]
  \centering
  \small
  \caption{Cut generation comparison in B\&C approach}
  \begin{adjustbox}{width=\textwidth,center}
    \begin{tabular}{cc|rrr|rrr|rrr|rrr}
    \toprule
          &       & \multicolumn{3}{c|}{\# Optimal} & \multicolumn{3}{c|}{\% Optimality Gap} & \multicolumn{3}{c|}{Av. Time (sec)} & \multicolumn{3}{c}{Av. Added Cuts} \\
    \midrule
    $\numtrips$ & $\numdepots$ & \misConflict    & \misGreedy     & \misGreedyEnhance   & \misConflict    & \misGreedy     & \misGreedyEnhance   & \misConflict    & \misGreedy     & \misGreedyEnhance  & \misConflict    & \misGreedy     & \misGreedyEnhance \\
    \midrule
        \multirow{3}[2]{*}{50} & 2     & \textbf{5} & \textbf{5} & \textbf{5} & -  & -  & - & 45.9 & 15.8 & \textbf{12.3}  & 1687 & \textbf{1207}  & 1254 \\
          & 3     & \textbf{5} & \textbf{5} & \textbf{5} & -     & -     & -     & 7.1		 & 2.9 & \textbf{1.7} & \textbf{378} & 633  &  412 \\
          & 4     & \textbf{5} & \textbf{5} & \textbf{5} & -     & -     & -     & 32.1		 & \textbf{10.4} & 13.1  & 894  & 804  & \textbf{799} \\
    \midrule
    
    \multirow{3}[2]{*}{60} & 2     & 4     & 4     & 4     & 0.28  & 0.18  & \textbf{0.10} & 48.5  & \textbf{14.4} & 14.6  & 1538  & 1388  & \textbf{1090} \\
          & 3     & \textbf{5} & \textbf{5} & \textbf{5} & -     & -     & -     & 806.4 & 821.8 & \textbf{644.5} & 3257  & 2980  & \textbf{2652} \\
          & 4     & \textbf{5} & \textbf{5} & \textbf{5} & -     & -     & -     & 530.6 & 458.7 & \textbf{317.7} & 2382  & 2508  & \textbf{2300} \\
    \midrule
    \multirow{3}[2]{*}{70} & 2     & 3     & 3     & 3     & 5.37  & \textbf{0.69} & 3.05  & 814.0 & \textbf{256.5} & 632.2 & 3797  & \textbf{3589} & 3859 \\
          & 3     & \textbf{5} & \textbf{5} & \textbf{5} & -     & -     & -     & 941.2 & 965.4 & \textbf{558.3} & 3609  & 3570  & \textbf{2321} \\
          & 4     & \textbf{3} & 2     & \textbf{3} & 0.26  & \textbf{0.17} & 0.21  & \textbf{1291.7} & 1712.4 & 1569.4 & \textbf{2456} & 3201  & 2826 \\
    \midrule
    \multirow{3}[1]{*}{80} & 2     & \textbf{3} & \textbf{3} & \textbf{3} & 0.84  & 0.49  & \textbf{0.42} & 324.9 & 249.9 & \textbf{76.3} & 2661  & 2904  & \textbf{1655} \\
          & 3     & 3     & 3     & \textbf{4} & \textbf{4.53} & 5.14  & 5.26  & 125.8 & 111.7 & \textbf{77.7} & 1578  & 1886  & \textbf{1630} \\
          & 4     & \textbf{3} & \textbf{3} & \textbf{3} & 5.40  & 11.75 & \textbf{4.88} & \textbf{693.9} & 1293.2 & 1383.3 & 3057  & \textbf{2941} & 3246 \\
    \midrule
    \multicolumn{2}{c|}{Total/Av.} & 49    & 48    & \textbf{50} & 3.01  & 3.07  & \textbf{2.25} & 426.4 & 429.0 & \textbf{363.4} & 2006  & 1998  & \textbf{1743} \\
    \bottomrule
    \end{tabular}%
    \end{adjustbox}
  \label{tab:b&c-cuts}%
\end{table}%

Table \ref{tab:b&c-cuts} shows that \misGreedyEnhance\ generally performs best by solving the largest number of instances and obtaining the smallest optimality gaps and solving times. We observe that \misConflict\ and \misGreedy\ have similar performances, which is mostly explained because the conflict refinement of CPLEX was run only after our greedy algorithm determined that a cut was needed. Also, it is interesting to see that \misConflict\ and \misGreedy\ add similar number of cuts, which shows the benefit of adding multiple \misGreedy\ per violated subproblem and, thus, reduce the search space. Lastly, \misGreedyEnhance\ adds the smallest amount of cuts in general, which is expected due to the dominance relationship with \misGreedy.


\paragraph{\underline{Enhancement Comparison.}} The above results considered enhancements to the basic B\&C algorithm, in particular: (i) the valid inequalities in Section \ref{sec:master-valid-inequalities}, and (ii) relaxing the integrality of the $\bz$ variables in Section \ref{sec:b&c-zcont}. Table \ref{tab:b&c-enhacements} shows the importance of both enhancements in our procedure when using \misGreedyEnhance\ cuts (we observe similar results for other cut variants). Here, \none\ avoids both enhancements, \validInequalities\ only uses the valid inequalities, \zcont\ considers $\bz$ as continuous variables, and \both\ uses both enhancements. The meaning of the columns and bold numbers is the same as in Table~\ref{tab:b&c-cuts}. In particular, the average time columns only consider the instances that are solved to optimality across all four variants and, thus, differ from the values in Table \ref{tab:b&c-cuts}. 

\begin{table}[htbp]
  \centering
  \small
  \caption{B\&C enhancement comparison with cuts \misGreedyEnhance}
    \begin{tabular}{cc|rrrr|rrrr|rrrr}
    \toprule
          &       & \multicolumn{4}{c|}{\# Optimal} & \multicolumn{4}{c|}{\% Optimality Gap} & \multicolumn{4}{c}{Av. Time (sec)} \\
    \midrule
    $\numtrips$ & $\numdepots$ & \none  & \validInequalities    & \zcont    & \both  & \none  & \validInequalities    & \zcont    & \both  & \none  & \validInequalities    & \zcont    & \both    \\
    \midrule
    \multirow{3}[2]{*}{50} & 2     & 5     & 5     & 5     & 5     & -     & -     & -     & -     & 12.6  & 33.7  & \textbf{9.2} & 12.3 \\
          & 3     & 5     & 5     & 5     & 5     & -     & -     & -     & -     & 1.7   & 10.4  & \textbf{1.5} & 1.7 \\
          & 4     & 5     & 5     & 5     & 5     & -     & -     & -     & -     & 8.8   & 23.1  & \textbf{5.8} & 13.1 \\
    \midrule
    \multirow{3}[2]{*}{60} & 2     & 4     & 4     & 4     & 4     & 0.27  & 0.25  & 0.23  & \textbf{0.10} & 67.4  & 48.7  & 18.9  & \textbf{14.6} \\
          & 3     & 4     & 4     & 4     & \textbf{5} & 0.38  & 0.17  & 0.17  & -     & 387.7 & 244.2 & 335.8 & \textbf{165.2} \\
          & 4     & \textbf{5} & 4     & \textbf{5} & \textbf{5} & -     & 0.05  & -     & -     & 608.2 & 532.9 & 324.1 & \textbf{287.1} \\
    \midrule
    \multirow{3}[2]{*}{70} & 2     & \textbf{3} & 2     & \textbf{3} & \textbf{3} & 5.68  & 5.24  & 3.06  & \textbf{3.05} & 18.1  & 140.2 & \textbf{13.9} & 23.3 \\
          & 3     & \textbf{5} & 4     & \textbf{5} & \textbf{5} & -     & 0.05  & -     & -     & \textbf{59.4} & 1839.0 & 70.3  & 68.6 \\
          & 4     & 2     & 2     & 2     & \textbf{3} & 0.20  & 0.91  & \textbf{0.15} & 0.21  & \textbf{1565.1} & 1923.2 & 1584.2 & 1569.4 \\
    \midrule
    \multirow{3}[2]{*}{80} & 2     & \textbf{3} & 2     & \textbf{3} & \textbf{3} & 0.53  & 0.62  & 0.64  & \textbf{0.42} & \textbf{4.0} & 13.9  & 5.7   & 9.1 \\
          & 3     & 3     & 3     & 3     & \textbf{4} & \textbf{2.83} & 10.56 & 4.97  & 5.26  & 37.7  & 180.7 & \textbf{32.1} & 77.7 \\
          & 4     & \textbf{3} & 2     & \textbf{3} & \textbf{3} & 7.20  & 5.68  & 4.97  & \textbf{4.88} & \textbf{39.1} & 355.0 & 74.2  & 100.6 \\
    \midrule
    \multicolumn{2}{c|}{Total/Average} & 47    & 42    & 47    & \textbf{50} & 2.60  & 3.28  & \textbf{2.17} & 2.25  & 189.8 & 390.5 & 155.5 & \textbf{140.8} \\
    \bottomrule
    \end{tabular}%
  \label{tab:b&c-enhacements}%
\end{table}%

Table \ref{tab:b&c-enhacements} shows that there is a clear advantage of using both enhancements to improve all metrics, and, surprisingly, this effect does not occur when we try each enhancement on its own. In particular, we see that \validInequalities\ has a negative effect in terms of run time and instances solved, mostly due to the large number of inequalities included (i.e., at most one per scenario and service requirement, for a total of $2000\sim 5000$ inequalities) and, thus, enlarging the master problem. However, when combined with \zcont, the effect of additional constraints is significantly decreased since we are reducing the branching factor of the problem by ignoring the integrality of $\bz$. We also try other enhancements presented in the literature for the deterministic MCF model (i.e., odd-cycle inequalities and variable fixing, as shown by  \cite{hadjar2006branch} and \cite{groiez2013separating}), but our results show that these alternatives have none or negative effects in the procedure, which might be explained by the different problem structure between the deterministic MDVSP and the CC-MDVSP.

\paragraph{\underline{Solution Quality.}} Lastly, we compare the quality of the solutions found with our best CC variant (i.e., \chanceSAA, using \misGreedyEnhance\ and \both) with two deterministic variants commonly used by practitioners \citep{kittelson2003transit,furth2007service}: (i) a model that considers only average travel times (i.e., \mean), and (ii) a conservative approach that uses the 75 percentile to estimate travel times (i.e., \percentile). Table \ref{tab:b&c-quality} shows the results where the first set of columns presents the average objective value found by each procedure, the second set is the relative difference between models \chanceSAA\ and \percentile\ with respect to \mean\ (i.e., $\frac{X- \mean}{\mean}$ with $X \in \{\chanceSAA, \percentile\}$), and the third set presents the percentage of scenarios that satisfy the CC when evaluated over 2000 scenarios that were not considered during the optimization phase. 

\begin{table}[htbp]
  \centering
  \small
  \caption{Solution quality comparison with deterministic approaches}
    \begin{tabular}{cc|rrr|rr|rrr}
    \toprule
          &       & \multicolumn{3}{c|}{Objective value} & \multicolumn{2}{c|}{\% Obj. Diff.} & \multicolumn{3}{c}{\% Scenarios} \\
    \midrule
    $\numtrips$ & $\numdepots$ & \mean  & \percentile & \chanceSAA    & \percentile & \chanceSAA    &  \mean  & \percentile & \chanceSAA \\
    \midrule
    \multirow{3}[2]{*}{50} & 2     &     161,110.4  &     172,332.4  &     161,250.4  & 7.0   & 0.1   & 83.9  & 99.4  & 94.4 \\
          & 3     &     170,010.4  &     179,480.0  &     170,054.4  & 5.6   & 0.0   & 87.8  & 99.1  & 95.5 \\
          & 4     &     168,266.8  &     179,713.2  &     168,357.6  & 6.8   & 0.1   & 87.2  & 99.3  & 95.1 \\
    \midrule
    \multirow{3}[2]{*}{60} & 2     &     217,675.6  &     239,556.4  &     217,939.7  & 10.1  & 0.1   & 82.2  & 98.2  & 94.5 \\
          & 3     &     201,747.2  &     221,111.6  &     202,062.4  & 9.6   & 0.2   & 76.2  & 98.1  & 94.3 \\
          & 4     &     225,984.0  &     241,715.2  &     226,221.5  & 7.0   & 0.1   & 86.1  & 98.2  & 95.0 \\
    \midrule
    \multirow{3}[2]{*}{70} & 2     &     225,623.2  &     250,116.4  &     229,758.8  & 10.9  & 1.8   & 74.3  & 97.7  & 93.7 \\
          & 3     &     226,316.4  &     246,416.4  &     226,497.8  & 8.9   & 0.1   & 79.7  & 98.2  & 93.4 \\
          & 4     &     232,159.2  &     256,424.4  &     232,975.2  & 10.5  & 0.4   & 71.3  & 97.5  & 93.2 \\
    \midrule
    \multirow{3}[2]{*}{80} & 2     &     277,649.6  &     300,261.2  &     278,568.3  & 8.1   & 0.3   & 76.1  & 97.3  & 93.0 \\
          & 3     &     259,916.4  &     280,397.6  &     264,609.8  & 7.9   & 1.8   & 74.3  & 97.5  & 93.8 \\
          & 4     &     245,881.2  &     266,018.4  &     251,442.8  & 8.2   & 2.3   & 72.4  & 94.2  & 93.4 \\
    \midrule
    \multicolumn{2}{c|}{Average:} &       &       &       & 8.4   & 0.6   & 79.3  & 97.9  & 94.1 \\
    \bottomrule
    \end{tabular}%
  \label{tab:b&c-quality}%
\end{table}%

We observe that \chanceSAA\ obtains a very low objective value, usually no more than 1\% over \mean, and has a percentage of scenarios that satisfy the CC close to the target number of 95\%. In contrast, \mean\ has the lowest cost but only fulfills the OTP conditions in 79\% of the scenario on average, and  \percentile\ significantly increases the objective value and usually returns schedules that are too conservative. Therefore, our \chanceSAA\ alternative is the best in terms of finding low-cost schedules that can achieve OTP in the range of the number of scenarios desired (i.e., 95\%).  

\subsection{Performance of the Lagrangian-based Approach}

To test the Lagrangian-based approach for large-scale instances, we use instances with $\numtrips \in \{200, 300\}$, and all the other parameters remain the same. These experiments consider a total time limit of 24 hours, with a 10-minute time limit per subproblem and a maximum of 100 Lagrangian dual iterations. None of our experiments reached the limits for the total time and Lagrangian iterations. Lastly, we solve each subproblem using the best cut generation and enhancement combination shown in the previous section (i.e., $\misGreedyEnhance$ with $\both$).

We first evaluate the quality of the solutions for different group sizes where each group $p \in \partitions$ had a maximum of $\numtrips_p \in \{20,30,40,50\}$ trips, (i.e., 4 to 15 groups). As explained in Section \ref{sec:ls-partition-trips}, we solve the deterministic MDVSP (in this case \percentile) to group the trips and ensure that each group has many trips that can be scheduled together. Preliminary results using \mean\ and random alternatives show significantly worse performance. Table \ref{tab:lagr-solving} shows the results for the Lagrangian-based approach for the four group sizes (i.e., \La{20}, \La{30}, \La{40}, and \La{50}) when solving for 750 scenarios. The first set of columns shows the average objective value. The second set of columns presents the percentage of scenarios that fulfill the service requirements. The last set of columns shows the total run time in hours. Bold numbers correspond to the lowest values for column sets 1 and 3 in each row, and values between $95 \pm 1\%$ for column set 2.

\begin{table}[htbp]
  \centering
  \small
    \caption{Performance comparison for Lagrangian-based approach with different group sizes}
    \begin{tabular}{cc|rrrr|rrrr|rrrr}
    \toprule
          &       & \multicolumn{4}{c|}{Objective Value ($\times10^3$)} & \multicolumn{4}{c|}{\% Scenarios OTP} & \multicolumn{4}{c}{Time (hours)} \\
    \midrule
    $\numtrips$ & $\numdepots$ &\La{20}  & \La{30}  & \La{40}  & \La{50} & \La{20}  & \La{30}  & \La{40}  & \La{50} & \La{20}  & \La{30}  & \La{40}  & \La{50} \\
    \midrule
    \multirow{3}[1]{*}{200} & 2     &    658.7  &         657.9  &   647.8  & \textbf{  639.3} & \textbf{95.5} & \textbf{95.7} & \textbf{95.4} & \textbf{95.0} & \textbf{  1.12} &      3.45  &      1.61  &      2.72  \\
          & 3     &    641.5  &         635.9  &   633.2  & \textbf{  623.4} & 96.1  & \textbf{95.7} & \textbf{95.9} & \textbf{94.6} & \textbf{  1.40} &      4.37  &      5.22  &      6.45  \\
          & 4     &    638.2  &         637.0  &   631.1  & \textbf{  622.5} & 96.1  & \textbf{95.7} & \textbf{95.5} & \textbf{95.5} & \textbf{  1.32} &      2.88  &      4.63  &      6.84  \\
    \midrule
    \multirow{3}[2]{*}{300} & 2     &    959.4  &         948.7  &   933.8  & \textbf{  922.1} & \textbf{94.6} & \textbf{94.9} & 92.7  & 91.4  & \textbf{  2.31} &      5.45  &   10.59  &   16.21  \\
          & 3     &    973.2  &         954.0  &   951.6  & \textbf{  932.8} & \textbf{95.8} & 93.9  & 93.4  & 93.0  & \textbf{  5.94} &   12.07  &   13.53  &   19.40  \\
          & 4     &    968.4  &         957.0  &   943.7  & \textbf{  936.7} & \textbf{94.9} & 92.9  & 92.8  & 92.7  & \textbf{  9.01} &   14.98  &   21.53  &   16.51  \\
    \midrule
    \multicolumn{2}{c|}{Average} &    806.6  &         790.2  &   779.5  & \textbf{  774.5} & \textbf{95.5} & \textbf{94.3} & 93.7  & 92.4  & \textbf{  3.52} &      9.52  &   11.35  &   10.66  \\
    \bottomrule
    \end{tabular}%
  \label{tab:lagr-solving}%
\end{table}%

We observe intuitive results for instances with $\numtrips = 200$, in which \La{50} achieves the lowest cost schedule satisfying the CC to a reasonable degree (i.e., $95 \pm 0.5\%$ ), but taking significantly more time than smaller group alternatives. In contrast, when $\numtrips = 300$, only \La{20} finds schedules that fulfill the CC. We believe that the Lagrangian dual problem becomes harder to solve when the number of groups and their size increase; that is, it is harder to find feasible solutions that synchronize the CC across all subproblems. We also note that most of the subproblems are solved to optimality for \La{20} and \La{30} (i.e., over 95\% of them), while around 75\% are optimally solved for \La{50}, which could also explain the behaviour for instances with $\numtrips=300$. However, the optimality gaps are usually small (i.e., around 1-5\%) for sub-optimal subproblem solutions. 

Table \ref{tab:lagr-eval} compares the results obtained by the two best Lagrangian-based alternatives in terms of satisfying the CC with the two deterministic alternatives, \mean\ and \percentile, when evaluated over 2000 scenarios. The columns and bold numbers have the same meaning as in Table \ref{tab:b&c-quality}. First, we see that \percentile, \La{20}, and \La{30} have similar objective values, all between 10\% and 15\% increased when compared to \mean. However, there is a significant difference when we compare the percentage of scenarios that satisfy the service requirements. The percentage significantly decreases for \mean\ and \percentile\ as the number of trips increases, which illustrates the lack of reliability guarantees for these approaches as the instances become larger (and, thus, more realistic). In contrast,  \La{20} obtains, on average, a percentage close to our 95\% target, which the percentage is closer to 94\% for \La{30}. Therefore, we can infer that the Lagrangian-based approach can indeed find feasible (or close to feasible) solutions for our CC-MDVSP, especially when the subproblems are small. 

\begin{table}[t] 
  \centering
  \small
  \caption{Solution quality comparison for large-scale instances}
    \begin{tabular}{cc|r|rrr|rrrr}
    \toprule
          &       & \multicolumn{1}{l|}{Obj.Val} & \multicolumn{3}{c|}{\% Obj. Diff.} & \multicolumn{4}{c}{\% Scenarios} \\
    \midrule
    $\numtrips$ & $\numdepots$ & \mean & \percentile & \La{20}  & \La{30}  & \mean & \percentile & \La{20}  & \La{30} \\
    \midrule
    \multirow{3}[1]{*}{200} & 2     & 598.6 & 11.5  & 12.2  & \textbf{10.0} & 31.3  & 91.6  & \textbf{94.5} & \textbf{94.4} \\
          & 3     & 575.1 & \textbf{11.5} & 13.2  & \textbf{11.5} & 31.2  & 91.0  & \textbf{94.3} & 93.7 \\
          & 4     & 573.1 & 12.0  & 12.5  & \textbf{11.4} & 27.6  & 93.8  & \textbf{94.8} & \textbf{95.2} \\
    \midrule
    \multirow{3}[2]{*}{300} & 2     & 843.2 & \textbf{9.0} & 16.6  & 13.8  & 8.3   & 80.3  & \textbf{94.6} & 92.5 \\
          & 3     & 858.4 & \textbf{9.0} & 15.6  & 13.4  & 9.1   & 77.7  & 92.9  & 93.4 \\
          & 4     & 853.0 & \textbf{9.2} & 17.0  & 13.5  & 10.0  & 80.6  & \textbf{94.4} & 93.2 \\
    \midrule
    \multicolumn{2}{c|}{Average} & 716.9 & 10.4  & 14.5  & 12.3  & 19.6  & 85.8  & \textbf{94.2} & 93.7 \\
    \bottomrule
    \end{tabular}%
  \label{tab:lagr-eval}%
\end{table}%

\section{Conclusion} \label{sec:conclusions}

This paper addresses the multi-depot vehicle scheduling problem (MDVSP) by incorporating travel time uncertainty and service reliability into the model. We propose a novel chance-constrained programming model to guarantee on-time performance (OTP) and ensure fair service distribution across different routes. Our study presents two optimization methods: an exact approach using a branch-and-cut procedure and a heuristic method based on Lagrangian decomposition. 
Our experimental results 
demonstrate the effectiveness of our proposed model in achieving reliable OTP. 
For the exact approach, the proposed cut families and enhancement ideas yield significant improvements in computational efficiency and solution quality, while the optimization-based heuristic method provides high-quality practical solutions for larger instances. Our methodological ideas have the potential to be adopted for some other chance-constrained programs with integer recourse.

For MDVSP, our solutions can significantly reduce the number of buses required while maintaining high service reliability. This translates to substantial cost savings for transit agencies, as minimizing the fleet size without compromising OTP is a critical objective. Additionally, our solutions show that it is possible to maintain reliability without resorting to large slack times, which are often added to schedules in practice to account for travel time variability, thereby improving transit speed and overall service attractiveness. 
Equity is another important consideration in our approach. Our model ensures that service improvements are distributed fairly across different routes, supporting the recent equity efforts in transit planning. This proactive consideration of fairness sets our work apart from traditional demand-centric models. Our findings also highlight the potential benefits of the multi-depot scheduling approach in the stochastic setting to achieve more efficient and reliable transit operations, further underscoring the value of our methodology. All in all, addressing some key priorities of transit agencies, namely the goal of delivering a cost-effective, reliable, and equitable service, our work sets a new benchmark in vehicle scheduling and paves the way for future advancements in public transportation planning. 

Future work could explore extending our model to include electric vehicles and other sustainability considerations, further enhancing the applicability of our approach to modern transit systems. Additionally, investigating real-time scheduling adjustments in response to dynamic changes in travel times could further improve service reliability and operational efficiency. Last but not least, the combination of stochastic MDVSP with other planning problems, such as timetabling, would be of high interest. 



\bibliographystyle{informs2014trsc} 
\bibliography{main} 


%
%
%

\newpage
\begin{APPENDICES}
\section{Proofs}
\label{appendix:lpcuts}

\subsection{Proof of Proposition \ref{prop:greedy}} \label{appendix:prop:greedy-proof}

{\sc Proposition \ref{prop:greedy}.} {\it
For a master solution $\bxhat$ and scenario $\s\in \scenarios$,  Algorithm \ref{alg:greedy} returns an optimal solution for \eqref{model:subproblem}, where $\by^*$ is the earliest start times of each timetabled trip.}

\begin{proof}{Proof.}
    First, by construction, a solution of Algorithm \ref{alg:greedy} is feasible for \eqref{model:subproblem} since it satisfies all the constraints in $\subproblem^\s$. 
    To prove that the solution is optimal, we first set the expressing variables to be as large as possible (i.e., $\bu^* = \express$), which is feasible and allows us to set $\by^*$ as the earliest start time for each trip. We use the assumption that the first trip of any bus can start as early as possible, that is, \eqref{ccs:start-depot} is redundant and \eqref{ccs:start-lb} can be tight for the first trip in each bus. We then compute the start time of each trip considering that it will start as early as possible to avoid unnecessary delays, that is the maximum bounds between inequalities \eqref{ccs:start-lb} and \eqref{ccs:start-trip}, as expressed in  \eqref{eq:earliest-start-time}. Since we iterate the trips in order, \eqref{eq:earliest-start-time} is guaranteed to give us the earliest start time of each trip $i \in \trips$ that satisfies all the constraints of the problem. We then set  $v_i^* = 0$ for all trips $i\in \trips$ that present a delay (i.e., $y_i^*> \stime_i + \ub$), and, accordingly, $z^*$ takes the  smallest possible value given $\bv^*$. Notice that the solution is indeed optimal because any $\by^*$ that is not set to its earliest start time could potentially increase the number of $\bv^*$ set to $0$ and, thus, force $z^*$ to be equal to one. 
    \hfill $\square$ 
\end{proof}

\subsection{Proof of Proposition \ref{prop:greedy_mis}} \label{appendix:prop:greedy_mis-proof}

{\sc Proposition \ref{prop:greedy_mis}.} {\it Consider a vehicle schedule $\bus(\xhat)$ and a scenario $\s\in \scenarios$ such that at least one of the service requirements is violated. Then, for any such constraint $\con \in \{\eqref{ccs:service-trips}, \{\eqref{ccs:service-routes}\}_{r\in \routes} \}$, 
Algorithm \ref{alg:cmis-greedy} builds a C-MIS $\scheduleCMIS$.}

\begin{proof}{Proof.}
For simplicity, we illustrate the proof with $\con = \eqref{ccs:service-trips}$ that forces $z=1$. The proof is analogous if we choose any violated inequality in \eqref{ccs:service-routes}. We first argue that $\scheduleCMIS$ is an IS for $\bus(\xhat)$ and scenario $\s$. By construction, Algorithm \ref{alg:cmis-greedy} chooses a subsequence of predecessors for every $i \in \delayMIS$ such that even if the first trip in such subsequence starts as early as possible, trip $i$ will be delayed. Thus, we can guarantee that the chosen subsequences in $\scheduleCMIS$ are sufficient to ensure that the trips in $\delayMIS$ are delayed and, thus, the service requirement is unfulfilled.

To prove that $\scheduleCMIS$ is minimal, we first note that Algorithm \ref{alg:cmis-greedy} explains the delays of trip in $ i \in \delaytrips$ if and only if $ i \in \delayMIS$. By design of Algorithm \ref{alg:cmis-greedy}, we explain the delays of all trips in $\delayMIS$. On the other hand, the predecessor condition of \textit{Step (i)} guarantees that no other delayed trip $j \in \delaytrips\setminus \delayMIS$  appears in the subsequences chosen by Algorithm \ref{alg:cmis-greedy} because $j$ cannot be a predecessor of any $i \in \delayMIS$ and not belong to $\delayMIS$. Thus, by construction, there cannot be a trip $j \in \delaytrips\setminus \delayMIS$ that is also in a pair trip of $\scheduleCMIS$.

Second, removing any trip pair in $\scheduleCMIS$ breaks the subsequence of predecessors that explain the delay of trips in $\delayMIS$. For every $i \in \delayMIS$, Algorithm \ref{alg:cmis-greedy} constructs the minimal subsequence of trips that explain the delay of $i$, so removing any pair in such subsequence will lead to a smaller subsequence where $i$ is not delayed. Since the size of $\delayMIS$ is also minimal, having trips in $\delayMIS$ that can be on-time will satisfy \con, and, thus, make $\scheduleCMIS$ not an IS for the restricted subproblem with \con\ as the only service requirement.
\hfill $\square$
\end{proof}

\subsection{Primal and Dual Model for Strengthen LP} \label{appendix:lpcuts-model}

We now present the LP model with the enhancements described in Section \ref{sec:lp-cuts} to generate Benders cuts, that is, model \ref{model:subproblem-lp}. This model is valid for a given master solution $\bxhat$ and utilizes the optimal greedy solution $(z, \by*,\bv^*, \bu^*)$ from Algorithm \ref{alg:greedy} to close the integrality gap. Recall that the model includes inequality \eqref{lp:delay-trips} to enforce the integrality of $z$ and considers $\bigMotp_i =y^*_i - \stime_i$ for all $i \in \trips$  to force integrality of $\bv$. To ease exposition, we do not replace the proposed values of $\bigMstart_{ij}$ for all $(i,j)\in \compatible$ here and they delay their replacement for the proof in the following section. Also, we assign  $u_i = \express_i$ for all $i \in \trips$, which is optimal for any given solution. Then, the LP in standard form is:
\begin{subequations}
\begin{align}
    \min\; & z \nonumber \\
    \text{s.t.}\; & \sum_{i \in \trips} v_i + \numserviceTrip z \geq \numserviceTrip  , \label{lp:service-trips}\\
    & \sum_{i \in \trips(r)} v_i  +\numserviceRoute_r z \geq \numserviceRoute_r , & \forall r \in \routes, \label{lp:service-routes}\\
    & z + \sum_{i \in \delayMIS} v_i \geq 1, \label{lp:delay-trips}\\
    & y_i - y_j \geq  \dur_j^\s + \travel_{ji}^\s - \express_j -\bigMstart_{ji}\left( 1 - \sum_{k \in \depots} x_{jik} \right), & \forall (j,i)\in \compatible, \label{lp:start-trip}\\
    & y_k \geq \travel_{ki}^\s -\bigMstart_{ki} \left( 1 - x_{kik} \right), & \forall i \in \trips, k \in \depots, \label{lp:start-depot} \\
    & v_i (\ub - y^*_i + \stime_i) - y_i \geq -y^*_i, & \forall i \in \trips, \label{lp:start-ub}\\
    & y _{i} \geq \stime_i - \lb, & \forall i \in \trips,\label{lp:start-lb}  \\
    & z, v _{i}, y_i \geq 0, & \forall i \in \trips. \label{lp:domains}
\end{align}
\end{subequations}

The dual model is detailed in what follows. Variables $\dualtrip \geq 0$, $\dualroute_r\geq 0$ for $r\in \routes$, and $\dualMIS\geq 0 $ are the dual variables associated with constraints \eqref{lp:service-trips}-\eqref{lp:delay-trips}, respectively. Similarly, $\dualstart_{ji}\geq 0$ for $(j,i)\in \compatible$ and $\dualstart_{ki}\geq 0$ for all $i \in \trips, k \in \depots$ are the dual variable associated with constraints \eqref{lp:start-trip} and \eqref{lp:start-depot}, respectively. Lastly, dual variables $\dualOTP_i\geq 0$ for all $i \in \trips$ are associated with \eqref{lp:start-ub} and  $\duallb_i\geq 0$ for all $i \in \trips$ are associated with \eqref{lp:start-lb}.
\begin{subequations}
\begin{align}
    \max\; & \dualtrip\numserviceTrip 
    + \sum_{r\in \routes}\dualroute \numserviceRoute_r 
    + \dualMIS 
    + \lefteqn{\sum_{(j,i)\in \compatible}\dualstart_{ji}\left( \dur_j^\s + \travel^\s_{ji} - \express_i - \bigMstart_{ji}\left( 1 - \sum_{k \in \depots} \xhat_{jik} \right) \right)} \nonumber \\
    & + \sum_{i \in\trips}\sum_{k \in \depots}\dualstart_{ki} \left( \travel_{ki}^\s -\bigMstart_{ki} \left( 1 - \xhat_{kik} \right) \right)
    - \sum_{i \in\trips}\dualOTP_iy^*_i 
    \lefteqn{+ \sum_{i \in\trips}\duallb_i(\stime_i - \lb)} \nonumber \\
    \text{s.t.}\; &
    \sum_{(j,i)\in \compatible} \dualstart_{ji} - \sum_{(i,j)\in \compatible} \dualstart_{ij} + \sum_{k \in \depots} \dualstart_{ki} -\dualOTP_i + \duallb_i \geq 0, & \forall i \in \trips, \label{dual:y} \\
    & \dualtrip + \dualroute_r + \dualOTP_i(\ub - y^*_i + \stime_i) \geq 0, & \forall i \in \trips\setminus\delayMIS, r \in \routes: i \in \trips(r), \\
    &  \dualtrip + \dualroute_r + \dualMIS +\dualOTP_i(\ub - y^*_i + \stime_i) \geq 0,  & \forall i \in \delayMIS, r \in \routes: i \in \trips(r), \\
    & \numserviceTrip\dualtrip + \sum_{r \in \routes} \numserviceRoute_r\dualroute + \dualMIS \geq 1,  \label{dual:z} \\
    & \dualstart_{ji} \geq 0, &\forall (j,i)\in \compatible,\nonumber \\
    & \dualtrip_i, \dualOTP_i, \duallb_i \geq 0, &\forall i \in \trips, \nonumber\\
    & \dualroute_r  \geq 0,  &\forall r \in \routes, \nonumber \\
    &  \dualMIS \geq 0. \nonumber
\end{align}
\end{subequations}

\subsection{Proof of Proposition \ref{prop:lp-and-mis-cuts}} \label{appendix:lpcuts-proof}

{\sc Proposition \ref{prop:lp-and-mis-cuts}.} {\it Consider the master solution $\bxhat$ that violates one of the service requirements $\con \in \{\eqref{ccs:service-trips}, \{\eqref{ccs:service-routes}\}_{r\in \routes} \}$ for a scenario $\s \in \scenarios$, and a $\delayMIS$ constructed using $\con$. Then, there exists a dual optimal solution of \eqref{model:subproblem-lp} such that \eqref{eq:lp-cut} and the C-MIS cut \eqref{eq:mis-cut} for the corresponding $\scheduleCMIS$ are identical.}

\begin{proof}{Proof.} 
In what follows, we use ``hat'' notation to refer to specific values of the dual variables (e.g., $\dualMIShat$ is a specific value assigned to $\dualMIS$). Recall that we are considering a master solution $\bxhat$ that violates one of the service requirements $\con \in \{\eqref{ccs:service-trips}, \{\eqref{ccs:service-routes}\}_{r\in \routes} \}$ for a scenario $\s \in \scenarios$, and a $\delayMIS$ constructed using $\con$.

We start the proof by justifying the structure of the Bender's cut \eqref{eq:lp-cut}. First, recall that constraints \eqref{lp:start-depot} are redundant and just appear in the model for completeness (see Section \ref{sec:problem} for further details). Thus, we can omit them from our analysis or, equivalently, set $\dualstarthat_{ki} = 0$ for all $i \in \trips$, $k \in \depots$. Then, the optimal objective function value of the dual can be written as follows, 
\[
  \sum_{(i,j) \in \compatible}\dualstarthat_{ij}\bigMstart_{ij}\sum_{k \in \depots} \xhat_{ijk} + C =1,
\]
where $C$ is constant concerning all other dual variables. Note that this value is equal to 1 since $\bxhat$ violates one of the service requirements in scenario $\s$. Then, the Bender's cut can be written as follows for master variables $\bx$ and  $z_s$.
\[
z_s \geq \sum_{(i,j) \in \compatible}\dualstarthat_{ij}\bigMstart_{ij}\sum_{k \in \depots} x_{ijk} + C = \sum_{(i,j) \in \compatible}\dualstarthat_{ij}\bigMstart_{ij}\sum_{k \in \depots} x_{ijk} + 1 - \sum_{(i,j) \in \compatible}\dualstarthat_{ij}\bigMstart_{ij}\sum_{k \in \depots} \xhat_{ijk}.
\]
By ordering the terms in the cut, we get \eqref{eq:lp-cut}, that is,
\[
    \sum_{(i,j) \in \compatible}\dualstarthat_{ij}\bigMstart_{ij}\sum_{k \in \depots} x_{ijk} \leq \sum_{(i,j) \in \compatible}\dualstarthat_{ij}\bigMstart_{ij}\sum_{k \in \depots} \xhat_{ijk} - 1 + z_\s,
\]

We now show how to create an optimal dual solution that leads to an C-MIS cut \eqref{eq:mis-cut} with the corresponding $\scheduleCMIS$. We consider a master solution $\bxhat$ that violates the service requirements $\con \in \{\eqref{ccs:service-trips}, \{\eqref{ccs:service-routes}\}_{r\in \routes} \}$ for a scenario $\s\in \scenarios$, the optimal greedy solution $(z, \by^*,\bv^*, \bu^*)$ from Algorithm \ref{alg:greedy}, a set of delay trips $\delayMIS$, and the corresponding $\scheduleCMIS$. 

First, recall that we are considering the following big-M coefficients, which are valid for our analysis since they preserve the optimal solution $(z, \by^*,\bv^*, \bu^*)$. 
\[ \bigMstart_{j,i} = \begin{cases}
y^*_i, &  \forall (j,i) \in \schedule, \\
\text{large number}, & \text{otherwise.} 
\end{cases}\]

To obtain the desired C-MIS cut, we will show that it is possible to create an optimal dual solution with the following dual variable values:
\[
\dualstarthat_{ji} = \begin{cases}
    \displaystyle\frac{1}{y^*_i},  &(j,i)\in \scheduleCMIS, \\
    0,  &(j,i)\in\compatible\setminus\scheduleCMIS. \\
\end{cases} , \quad \forall (j,i) \in \compatible
\]
Since $y_i >0$ for all $i \in \trips$ due to constraint \eqref{lp:start-lb} and our problem assumptions (see Section \ref{sec:problem}), values $\dualstarthat_{ji}$ for all $(j,i) \in \compatible$ are well-defined. Also, note that if we replace these dual variables assignments and the big-M values in \eqref{eq:lp-cut}, we get the desired C-MIS cut. Therefore, the only missing piece is to show that values $\dualstarthat_{ji}$ for all $(j,i)\in\compatible$ lead to an optimal dual solution.

First, we use the complementary slackness conditions to set the values of other dual variables. Since $y_i > 0$ for all $i \in \trips$, dual constraint  \eqref{dual:y} has to be strictly equal to zero. Consider set $\tripsMIS$ to be all trips indices that are involved in the C-MIS $\scheduleCMIS$, that is, $\tripsMIS = \{i \in \trips:\; \exists\; j \text{ such that } (j,i) \in \scheduleCMIS \text{ or } (i,j) \in \scheduleCMIS \}$. Then, 
since we fixed $\dualstarthat_{ji} = 0 $ for all $(j,i)\notin\scheduleCMIS$, we  have that $\dualOTP_i = \duallb_i$ for all $i \notin \tripsMIS$ in order to satisfy \eqref{dual:y}, and we set $\dualOTPhat_i = \duallbhat_i = 0$ in our constructed dual solution. Also, we  set $\dualtriphat = 0$ and $\dualroutehat_r = 0$ 
for all $r \in \routes$.  Lastly, since $z>0$ for any optimal solution due to \eqref{lp:delay-trips}, constraint \eqref{dual:z} has to be strictly equal to one and, thus, $\dualMIShat =1$.

Replacing all the variable assignments into our dual problem and enforcing equality to \eqref{dual:y}, we have the following simplified dual problem. To ease notation, we replace all variable assignments previously described except $\dualstarthat_{ji}$ for $(j,i)\in \scheduleCMIS$. We also replace $ \dur_j^\s + \travel^\s_{ji} - \express_j$ with $y^*_i-y_j^*$ for all $(j,i) \in \scheduleCMIS$ in the objective function since constraint \eqref{lp:start-trip} forces both expressions to be equal in the primal optimal solution given by Algorithm \ref{alg:greedy}. 
\begin{subequations}
\begin{align}
    \max\; & 1
    + \sum_{(j,i)\in \scheduleCMIS}\dualstarthat_{ji}(y^*_i - y_j^*)
    - \sum_{i \in \tripsMIS}\dualOTP_i y_i^*
    \lefteqn{+ \sum_{i \in \tripsMIS}\duallb_i(\stime_i - \lb)} \nonumber \\
    \text{s.t.}\; &
    \sum_{(j,i)\in \scheduleCMIS} \dualstarthat_{ji} - \sum_{(i,j)\in \scheduleCMIS} \dualstarthat_{ij}  -\dualOTP_i + \duallb_i = 0, & \forall i \in \tripsMIS, \label{dual3:y} \\
    &  \dualOTP_i(\stime_i + \ub - y^*_i) \geq 0, & \forall i \in \tripsMIS\setminus\delayMIS, \label{dual3:v_normal}\\
    &  \dualOTP_i \leq \frac{1}{y^*_i -(\stime_i + \ub) },  & \forall i \in \delayMIS, \label{dual3:v_delay} \\
    & \dualOTP_i, \duallb_i \geq 0, &\forall i \in \trips. \nonumber
\end{align}
\end{subequations}

To assign the remaining variables of the models we use \eqref{dual3:y} and show that \eqref{dual3:v_normal} and \eqref{dual3:v_delay} are satisfied. First, note that \eqref{dual3:y} has at most one $\dualstarthat$ term per summation since every trip $i \in \trips$ can have at most one trip as a predecessor and successor, respectively, in any master solution. Thus,  \eqref{dual3:y} can be written as follows for the general case:
\[
\dualstarthat_{ji} - \dualstarthat_{ij'}  -\dualOTP_i + \duallb_i = 0, \quad \forall i \in \tripsMIS,\;  j, j' \in \trips \text{ such that } (j,i), (i,j')\in \scheduleCMIS.
\]
Then, we partition the trips in $\tripsMIS$ in three different subsets: 
\begin{enumerate}
    \item Trips in $\scheduleCMIS$ that are first in a subsequence, that is,
    \[ \tripsMISfirst = \{i \in \tripsMIS: \not\exists\ j \in \tripsMIS \text{ such that } (j,i)\in \scheduleCMIS  \}  \]
    \item Trips in $\scheduleCMIS$ that are last in a subsequence, that is, 
    \[ \tripsMISlast = \{i \in \tripsMIS: \not\exists\ j \in \tripsMIS \text{ such that } (i,j)\in \scheduleCMIS  \}  \]
    \item Trips in $\scheduleCMIS$ that are in the middle of a subsequence, that is, 
    \[ \tripsMISmiddle = \{i \in \tripsMIS: \exists\ j,j' \in \tripsMIS \text{ such that } ,(j',i),(i,j)\in \scheduleCMIS \}  \]
\end{enumerate}

Note that trips in $\tripsMISfirst$ start as early as possible in the solution of Algorithm \ref{alg:greedy}, so constraint \eqref{lp:start-ub}  associated with those indices are loose and, thus, $\dualOTPhat_i = 0$ for all $i \in \tripsMISfirst$. Similarly, all trips in $\tripsMISlast$ and  $\tripsMISmiddle$ start after $\stime_i - \lb$, so the constraints \eqref{lp:start-lb} are loose for such indices and, thus,  $\duallbhat_i = 0$  for all $i \in \tripsMISfirst \cup \tripsMISlast$.Then, replacing these assignments and $\dualstarthat_{ji}$ for $(j,i)\in \scheduleCMIS$ in constraint \eqref{dual3:y}, we get the following values for the remaining dual variables:
\begin{subequations}
\begin{align*}
    & \duallbhat_j = \frac{1}{y^*_i}, & \forall j \in \tripsMISfirst, (j,i)\in \scheduleCMIS, \\
    & \dualOTPhat_j = \frac{1}{y^*_j}, & \forall j \in \tripsMISlast, \\
    & \dualOTPhat_j  = \frac{1}{y^*_j} - \frac{1}{y^*_i}, & \forall j \in \tripsMISmiddle, (j,i)\in \scheduleCMIS.
\end{align*}
\end{subequations}
Note that $\frac{1}{y^*_j} - \frac{1}{y^*_i} \geq 0$ for all $(j,i)\in \scheduleCMIS$ since $j$ is schedule before $i$ in $\bxhat$ and, thus, $y_j^* \leq y_i^*$. 

We now argue that $\dualOTPhat_j$ for each $j \in \tripsMISmiddle\cup\tripsMISlast$ satisfy constraints \eqref{dual3:v_normal}-\eqref{dual3:v_delay}. Note that constraint \eqref{dual3:v_normal} is satisfied since our constructed $\dualOTPhat_i \geq 0$  and $\stime_i + \ub \geq y^*_i$ for all trips $i \in \tripsMIS$ without delay (i.e., not in $\delayMIS$). Constraint \eqref{dual3:v_delay} is satisfied for all $i \in \tripsMISlast$ since 
 \[  \frac{1}{y^*_i} \leq \frac{1}{y_i^* - (\stime_i +\ub)} \]
as $y_i^* > \stime_i +\ub$  and $\stime_i +\ub \geq 0$ hold. Similarly, with some algebraic manipulation, it can be shown that constraint \eqref{dual3:v_delay} is satisfied for all $i \in \tripsMISmiddle\cap \delayMIS$. Therefore, we constructed a feasible dual solution using the desired $\dualstarthat_{ij}$ for all $(i,j) \in \scheduleCMIS$.

Lastly, we replace all of these  variable assignments in the objective function to check that we obtain an optimal solution:
\begin{subequations}
\begin{align*}
    & 1
    + \sum_{(j,i)\in \scheduleCMIS}\dualstarthat_{ji}(y^*_i - y_j^*)
    - \sum_{i \in \tripsMIS}\dualOTPhat_i y_i^*
    + \sum_{i \in \tripsMIS}\duallbhat_i(\stime_i - \lb) \\
    =\ & 1
    + \sum_{(j,i)\in \scheduleCMIS}\dualstarthat_{ji}(y^*_i - y_j^*)
    - \sum_{j \in \tripsMISmiddle}\dualOTPhat_j y_j^* 
    - \sum_{j \in \tripsMISlast}\dualOTPhat_j y_j^* 
    +\sum_{j \in \tripsMISfirst}\duallbhat_j (\stime_j - \lb) \\
    =\ & 1
    + \sum_{(j,i)\in \scheduleCMIS}\left(\frac{y^*_i - y_j^*}{y^*_i} \right)
    - \sum_{j \in \tripsMISmiddle, (j,i)\in \scheduleCMIS }\left( \frac{1}{y^*_i} - \frac{1}{y^*_j} \right) y_i^* 
    - \sum_{i \in \tripsMISlast} \frac{y^*_j}{y^*_j} 
    +\sum_{i \in \tripsMISfirst, (j,i)\in \scheduleCMIS} \frac{\stime_j - \lb}{y^*_i} \\
   =\ &  1
    + \sum_{(j,i)\in \scheduleCMIS}\left( 1 -\frac{y_j^*}{y^*_i} \right)
    - \sum_{j \in \tripsMISmiddle, (j,i)\in \scheduleCMIS }\left(  1 - \frac{y^*_i}{y^*_j} \right)
    - \sum_{i \in \tripsMISlast} 1 
    +\sum_{i \in \tripsMISfirst, (j,i)\in \scheduleCMIS} \frac{y^*_j}{y^*_i} \\
     =\ & 1  
\end{align*}
\end{subequations}

Note that $y^*_j = \stime_j - \lb$ for all $j \in \tripsMISfirst$, which is used going from lines 3 to 4. Also, all the summations in the fourth line cancel each other, which results in an objective value equal to one. Since the dual objective value is equal to the primal objective value, we can guarantee that this dual solution is optimal. Therefore, we have built an optimal dual solution that leads to a C-MIS cut. The validity of the cuts follows from the validity of the C-MIS cut.  
\hfill $\square$
\end{proof}

\subsection{Proof of Proposition \ref{prop:z-continuous}} \label{appendix:prop:z-continuous-proof}

{\sc Proposition \ref{prop:z-continuous}.} {\it The B\&C decomposition scheme with the previously mentioned modification converges in a finite number of iterations and returns an optimal schedule to \eqref{model:chance-saa}.}

\begin{proof}{Proof.}
First, we note that all the proposed cuts introduced in this section have the form of \eqref{master:cuts-added} for any give $s\in \scenarios$, that is,
\[
\sum_{(i,j)\in \arcs} \lambda_{ij}\sum_{k \in \depots} x_{ijk} \leq \lambda_0 - 1 + z_\s,
\]
where $\lambda_{ijk}, \lambda_0 \geq 0$. Due to the master problem constraints \eqref{cc:tripOnce} that enforce each trip to be assigned only once, we can guarantee that $\sum_{(i,j)\in \arcs}\lambda_{ij}\sum_{k \in \depots} x_{ijk} \leq \lambda_0$ for any feasible assignment of $\bx$ in  \eqref{model:master}. Moreover, there exists at least one schedule $\bxhat$ such that $\sum_{(i,j)\in \arcs}\lambda_{ij}\sum_{k \in \depots} \xhat_{ijk} = \lambda_0$, for instance, the schedule that built such constraint. Therefore, there exists a schedule $\bxhat$ that forces variable $z_\s$ to take value one for any generated cut of the form \eqref{master:cuts-added}. 

Second, for a given $\bxhat$, the decomposition scheme now solves \eqref{model:subproblem} for any $\s \in \scenarios$ such that $\zhat_s < 1$. Therefore, our procedure will add the necessary constraints to enforce integrality on the appropriate $\bz$ variables (i.e., the variables associated with scenarios that violate a service requirement) for given candidate schedule $\bxhat$, which are finitely many. Thus, the modified decomposition scheme will converge in a finite number of iterations, that is, at least one iteration for each candidate schedule that satisfies $\mcfset$. Moreover, the returned schedule $\bxhat$ is optimal for \eqref{model:chance-saa} due to the tightness and validity of the generated cuts \eqref{master:cuts-added}. \hfill $\square$
\end{proof}

\hypertarget{appendixB}{}  
\section*{Appendix B: \ Instance Generation}

We now detailed how we generate random instances for the CC-MDVSP. We made three slight modifications to the commonly used approach proposed by \cite{carpaneto1989branch} for the deterministic MDVSP to consider: (i) trips grouped intro routes, (ii) random travel times, and (iii) expressing. All other problem components (e.g., depot capacities, start time of timetable trips, and costs of deadhead trips) are identical to the approach of \cite{carpaneto1989branch}. Lastly, as in \cite{carpaneto1989branch}, we consider all travel times to be integer quantities, thus, we round to the closest integer any fractional number given when sampling random distributions. 

\paragraph{\underline{Routes.}}
As proposed by \cite{carpaneto1989branch}, we first generate a set of random locations in a grid to represent a trip's possible start and end locations. Since the CC-MDVSP trips are grouped into routes, we first randomly assign start and end locations to each route and then assign these locations to trips of such route. In particular, each route $r\in \routes$ has to locations $\loc_1$ and $\loc_2$ and each trip $i \in \trips^r$ can have one of these locations a starting and/or ending point. For instance, trip $i \in \trips^r$ can be a round-trip where $\locstart_i = \locend_i = \loc_1$ (i.e., the trips starts at $\loc_1$, goes to $\loc_2$, and returns to $\loc_1$), while trip $i' \in \trips^r$ ($i \neq i'$) can be a one-direction trip with $\locstart_{i'} = \loc_2$ and $\locend_{i'} = \loc_1$. Since \cite{carpaneto1989branch} distinguishes between short and long trips in terms of duration, we assume that long trips are always round trips while short trips are one-direction trips. We use the same parameters and probabilities used in \cite{carpaneto1989branch} to determine the grid size, the number of generated locations, and the proportion of long and short trips. 

\paragraph{\underline{Random travel times.}} 
The approach proposed by \cite{carpaneto1989branch} used Euclidean distance to compute travel times of deadhead trips (e.g., $\travel_{ij}$ for all $i,j \in \compatible$) and randomly generate durations of timetable trips using uniform distributions with different parameters depending if the trip is long or short. 
We use this same approach and distribution to get the average travel time of deadhead trips and trip duration, respectively. To generate scenarios for our CC, we sample using a log-normal distribution as suggested in the literature (see, for example, the work of \cite{rahman2018analysis}). In particular, each time is sampled using a log-normal distribution where its mean is the average time given by the \cite{carpaneto1989branch} approach, and the standard deviation is 0.2 times the mean. 

\paragraph{\underline{Expressing.}} 
We consider the maximum expressing time to be a proportion of a trip's average duration (i.e., $\bar{\dur}_i$ for all $ i \in \trips$). If a trip $i \in \trips$ has an average duration less than 10, then the maximum expressing is $\express_i = 0$. Otherwise, the maximum expressing is a random integer between 5\% and 10\% of the average duration, that is, $\express_i \in [0.05\bar{\dur}_i, \; 0.01 \bar{\dur}_i].$




\end{APPENDICES}

\end{document}